\newtheorem{theorem}{Theorem}[section]
\newtheorem{corollary}{Corollary}[section]
\newtheorem{lemma}{Lemma}[section]
\newtheorem{proposition}{Proposition}[section]
\newtheorem{definition}{Definition}[section]
\newtheorem{remark}{Remark}
\newcommand\supp{\mathop{\rm supp}\nolimits}
\def\supp{\text{supp}}
\def\<{\langle}
\def\>{\rangle}
\begin{document}
%%%%%%%%%%%%%%%%%%%%%%%%%%%%%%%%%%%%%%DA TESI%%%%%%%%%%%%%%%%%%%%%%%%%%
\title[Microlocal Regularity of PDE in Fourier Lebesgue spaces]{Microlocal Regularity of  nonlinear PDE
 in quasi-homogeneous Fourier Lebesgue spaces}
\author{Gianluca Garello, Alessandro  Morando}
%\date{}
\maketitle
\begin{abstract}
We study the continuity in weighted Fourier Lebesgue spac\-es for a class of pseudodifferential operators,  whose symbol has finite Fourier Lebesgue regularity with respect to $x$ and satisfies a quasi-homogeneous decay of derivatives with respect to the $\xi$ variable. Applications to Fourier Lebesgue microlocal regularity of linear and nonlinear partial differential equations are given.
\end{abstract}
\textbf{Keywords.} Microlocal Analysis, Pseudodifferential Operators , Fourier Lebesgue Spaces.\\
\textbf{MSC2020.} 35J60, 35J62, 35S05.

\maketitle
\tableofcontents
\section{Introduction}\label{int}
In \cite{GM-18} we study inhomogeneuos local and microlocal propagation of singularities of generalized Fourier Lebesgue type for a class of semilinear partial differential equations (shortly written PDE): other results on the topic may be found in \cite{G1}, \cite{PTT2}, \cite{PTT1}.   The present paper is a natural  continuation of the same subject, where Fourier Lebesgue microlocal regularity for nonlinear  PDE is considered. To introduce the problem, let us first consider the following general equation
\begin{equation}\label{nl_eq}
F(x,\partial^\alpha u)_{\alpha\in\mathcal I}=0\,,
\end{equation}
where $\mathcal I$ is a finite set of multi-indices $\alpha\in\mathbb Z^n_+$, and $F(x,\zeta)\in C^\infty(\mathbb R^n\times\mathbb C^N)$ is a nonlinear function of $x\in\mathbb R^n$ and $\zeta=(\zeta^\alpha)_{\alpha\in\mathcal I}\in \mathbb C^N$. In order to study the regularity of solutions of \eqref{nl_eq}, we can move the investigation  to the {\em linearized equations} obtained from differentiation with respect to $x_j$ 
\[
\sum\limits_{\alpha\in\mathcal I}\frac{\partial F}{\partial\zeta^\alpha}F(x,\partial^\beta u)_{\beta\in\mathcal I}\partial^\alpha\partial_{x_j}u=-\frac{\partial F}{\partial x_j}(x,\partial^\beta u)_{\beta\in\mathcal I}\,,\quad j=1,\dots,n.
\] 
Notice that the regularity of the coefficients $a_\alpha(x):=\frac{\partial F}{\partial\zeta^\alpha}(x,\partial^\beta u)_{\beta\in\mathcal I}$ depends on some a priori  smoothness of the solution $u=u(x)$ and the nonlinear function $F(x,\zeta)$. This naturally leads to the study of linear PDE whose coefficients have only limited regularity, in our case they will belong to some generalized Fourier Lebesgue space.\\
Results about local and microlocal regularity for semilinear and non linear PDE in Sobolev and Besov framework may be found in \cite{GM-05}, \cite{GM-15}.\\
Failing of any symbolic calculus for pseudodifferential operators with symbols $a(x,\xi)$ with limited smoothness in $x$, one needs to refer to paradifferential calculus of Bony-Meyer \cite{BO}, \cite{ME1}  or decompose the non smooth symbols according to the general technique introduced by M.Taylor in \cite[Proposition 1.3 B]{MT-NLPDE}; here we will follow this second approach. By the way both methods rely on the dyadic decomposition of distributions, based on a partition of the  frequency space $\mathbb R^n_\xi$ by means of suitable family of crowns, see again Bony \cite{BO}.\\
In this paper we consider a natural framework where such a decomposition method can be adapted, namely we deal with symbols which exhibit a behavior at infinity of quasi-homogeneous type,  called in the following \textit{quasi-homogeneous symbols}.  When the behavior of symbols at infinity does not satisfy any kind of homogeneity, the dyadic decomposition method seems to fail. \\
In general  the technique of Taylor quoted above  splits the symbols $a(x,\xi)$ with limited smoothness in $x$ into
\begin{equation}\label{Tsplit}
a(x,\xi)= a^\# (x,\xi)+ a^\natural (x,\xi).
\end{equation}
While $a^\natural(x,\xi)$ keeps the same regularity of $a(x,\xi)$, with a slightly improved decay at infinitive, $a^\#(x, \xi)$ is a smooth symbols of type $(1,\delta)$, with $\delta>0$.
\\
From Sugimoto-Tomita \cite{SUTO08}, it is known that, in general, pseudodifferential operators with symbol in $S^0_{1,\delta}$,  are not bounded on modulation spaces $M^{p,q}$  as long as $0<\delta\le 1$ and $q\neq	 2$. Since the Fourier-Lebesgue and modulation spaces are {\em locally} the same, see \cite{Gr-book} for details, it follows from \cite{SUTO08} that the operators $a^\#(x, D)$ are generally unbounded on Fourier Lebesgue spaces, when the exponent is different of 2. We  are able to avoid this difficulty by carefully analyzing the behavior of the term $a^\#(x,\xi)$ as described in the next Sections \ref{smooth_symb_sct}, \ref{propation_sing_FL_sct_0}.\\
In the first Section all the main results of the paper are presented. The proof are postponed in the subsequent sections. Precisely in Section \ref{FL-sp_sct} a generalization to the quasi-homogeneous framework of the characterization of Fourier Lebesgue spaces  , by means of dyadic decomposition is detailed. Section \ref{PROOF1} is completely devoted to the proof of Thoerem \ref{FL_thm}. The symbolic calculus of pseudodifferential operators with smooth symbols is developed in Section \ref{smooth_symb_sct}, while Section \ref{propation_sing_FL_sct_0} is devoted to the generalization of the Taylor splitting technique. In the last Section we study the microlocal behavior of pseudodifferential operators with smooth symbols, jointly with their applications to nonlinear PDE.

\section{Main results} \label{int_sct}
\subsection{{Notation}}\label{not}
In this preliminary section we give the main definitions and notation most frequently used in the paper. $\mathbb R_+$ and $\mathbb N$ are respectively the sets of strictly positive real and integer numbers. For $M=(\mu_1,\dots,\mu_n)\in \mathbb R_+^n$, $\xi\in \mathbb R^n$ we define  :
\begin{equation}\label{M-h-w}
\langle\xi\rangle_M:=\left(1+\vert\xi\vert_M^2\right)^{1/2}\quad (M-\mbox{\em weight})\,,
\end{equation}
where 
\begin{equation}\label{M-h-n}
\vert\xi\vert_M^2:=\sum\limits_{j=1}^n\vert\xi_j\vert^{2\mu_j}\quad (M-\mbox{\em norm})\,.
\end{equation}
For $t>0$ and $\alpha\in\mathbb Z^n_+$, we set
\begin{equation}\label{M-notations}
\begin{split}
& t^{1/M}\xi:=(t^{1/\mu_1}\xi_1,\dots,t^{1/\mu_n}\xi_n)\,; \\
&\langle\alpha,1/M\rangle:=\sum\limits_{j=1}^n\alpha_j/\mu_j\,;\\
& \mu_\ast:=\min\limits_{1\le j\le n}\mu_j\,,\qquad\mu^\ast:=\max\limits_{1\le j\le n}\mu_j\,.
\end{split}
\end{equation}
We call $\mu_\ast$ and $\mu^\ast$ respectively the {\em minimum} and the {\em maximum order} of $\langle \xi\rangle_M$; furthermore, we will refer to $\langle\alpha,1/M\rangle$ as the $M-${\em order} of $\alpha$. In the case of $M=(1,\dots,1)$, \eqref{M-h-n} reduces to the {\em Euclidean} norm $\vert\xi\vert$, and the $M-$ weight \eqref{M-h-w} reduces to the standard {\em homogeneous weight} $\langle\xi\rangle=(1+\vert\xi\vert^2)^{1/2}$.

The following properties can be easily proved, see \cite{GM-06} and the references therein.

\begin{lemma}\label{lemma_M-h-w}
For any $M\in\mathbb R^n_+$, there exists a suitable positive constant $C$ such that the following hold for any $\xi\in \mathbb R^n$:
\begin{eqnarray}
\label{PG}
\frac1{C}\langle\xi\rangle^{\mu_\ast}\le\langle\xi\rangle_M\le C\langle\xi\rangle^{\mu^\ast}\,,& \text{Polynomial growth};\\
\label{sub-add}
\vert\xi+\eta\vert_M\le C\left\{\vert\xi\vert_M+\vert\eta\vert_M\right\}\,, & M- \text{sub-additivity};\\
\label{M-hom}
\vert t^{1/M}\xi\vert_M=t\vert\xi\vert_M\,,\quad t>0\,, & M-\text{homogeneity} .
\end{eqnarray}
\end{lemma}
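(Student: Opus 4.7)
The plan is to treat the three estimates independently, since each follows from elementary manipulations of the $M$-norm definition \eqref{M-h-n}, with the lower bound in \eqref{PG} being the only part requiring some care.

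For the $M$-homogeneity \eqref{M-hom}, I would just unravel the definitions. The scaling $t^{1/M}\xi$ multiplies the $j$-th coordinate by $t^{1/\mu_j}$, so raising to the power $2\mu_j$ produces the factor $t^2$ uniformly in $j$. Summing over $j$ yields $|t^{1/M}\xi|_M^2 = t^2\, |\xi|_M^2$, and taking square roots gives the claim. No case analysis is needed.

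For the sub-additivity \eqref{sub-add}, I would apply the standard scalar inequality $(a+b)^s \le C_s (a^s+b^s)$, valid for all $a,b\ge 0$ and $s>0$ (with $C_s=1$ if $s\le 1$ and $C_s=2^{s-1}$ if $s\ge 1$), to each coordinate with $s=2\mu_j$. Summing over $j$ gives $|\xi+\eta|_M^2 \le C\bigl(|\xi|_M^2+|\eta|_M^2\bigr)$ with $C$ depending only on $\mu^\ast$. Since $a^2+b^2\le (a+b)^2$ for $a,b\ge 0$, taking square roots delivers \eqref{sub-add}.

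For the polynomial growth \eqref{PG}, the upper bound is straightforward: for every coordinate, $|\xi_j|\le \langle\xi\rangle$, whence $|\xi_j|^{2\mu_j}\le \langle\xi\rangle^{2\mu^\ast}$ (using $\langle\xi\rangle\ge 1$), so $|\xi|_M^2 \le n\langle\xi\rangle^{2\mu^\ast}$ and $\langle\xi\rangle_M \le C\langle\xi\rangle^{\mu^\ast}$. The lower bound is the main point; I would split into cases. For $|\xi|\le 1$ the quantity $\langle\xi\rangle^{\mu_\ast}$ is bounded, and $\langle\xi\rangle_M\ge 1$ suffices. For $|\xi|\ge 1$, pick the index $k$ with $|\xi_k|=\max_j|\xi_j|$, so that $|\xi_k|\ge |\xi|/\sqrt{n}$. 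If $|\xi_k|\ge 1$ then $|\xi_k|^{2\mu_k}\ge |\xi_k|^{2\mu_\ast}\ge (|\xi|/\sqrt{n})^{2\mu_\ast}$, which together with $\langle\xi\rangle\le \sqrt 2|\xi|$ yields the required bound. In the residual subcase $|\xi_k|<1$ with $|\xi|\ge 1$, the vector $\xi$ lies in the bounded set $\{|\xi|\le\sqrt n\}$, so both sides are comparable to constants and the estimate is trivial. The only conceptual obstacle is keeping track of these cases, since the interplay between $\mu_\ast$, $\mu^\ast$ and the regime $|\xi_j|\gtrless 1$ is what makes the two-sided polynomial comparison non-uniform. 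Combining the three cases and absorbing constants into a single $C>0$ completes the proof.
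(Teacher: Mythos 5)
Your proof is correct and fully elementary. The paper itself gives no proof of this lemma, referring instead to \cite{GM-06} ``and the references therein,'' so there is no in-text argument to compare against; your write-up is a valid, self-contained verification. All three parts check out: the $M$-homogeneity is an immediate computation; the scalar inequality $(a+b)^s\le C_s(a^s+b^s)$ applied coordinatewise with $s=2\mu_j$, followed by $a^2+b^2\le(a+b)^2$, gives the sub-additivity with a constant depending only on $\mu^\ast$; and for the polynomial bounds, the upper estimate follows from $|\xi_j|\le\langle\xi\rangle$ and $\langle\xi\rangle\ge 1$, while for the lower estimate your three-way case split ($|\xi|\le 1$; $|\xi|\ge 1$ with the maximal coordinate $|\xi_k|\ge 1$; the residual compact region $1\le|\xi|<\sqrt n$, $|\xi_k|<1$) correctly isolates the only place where the exponent $\mu_\ast$ enters, namely through $|\xi_k|^{2\mu_k}\ge|\xi_k|^{2\mu_\ast}$ when $|\xi_k|\ge 1$. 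The observation that $|\xi|\le\sqrt n\,|\xi_k|$ forces the residual subcase into a compact set, where $\langle\xi\rangle_M\ge 1$ trivially handles the bound, is exactly the right way to close the argument.
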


For  $\phi$  in the space of \textit{rapidly decreasing functions} $ \mathcal S(\mathbb R^n)$, the Fourier transform is defined by $\hat\phi(\xi)=\mathcal F \phi(\xi)=\int e^{- ix\cdot\i}\phi(x)\, dx$, $x\cdot\xi=\sum_{j=1}^n x_j\xi_j$; $\hat u =\mathcal F u$, defined by $\langle \hat u, \phi\rangle=\langle u, \hat \phi\rangle$, is its analogous in the dual space of \textit{tempered distributions} $\mathcal S'(\mathbb R^n)$  
%============subsection 
\subsection{Pseudodifferential operators with symbols in Fourier Lebesgue spa\-c\-es}\label{FLcont_sct}
\begin{definition}\label{def_FL}
For $s\in\mathbb R$ and $p\in[1,+\infty]$ we denote by $\mathcal FL^p_{s,M}$ the class of all  $u\in\mathcal S^\prime(\mathbb R^n)$ such that $\hat u$ is a measurable function in $\mathbb R^n$ and
$\langle\cdot\rangle^s_M\hat u\in L^p(\mathbb R^n)$.
$\mathcal FL^p_{s,M}$, endowed with the natural norm
\begin{equation}\label{FL-norm}
\Vert u\Vert_{\mathcal FL^p_{s,M}}:=\Vert\langle\cdot\rangle^s_M\hat u\Vert_{L^p}\,,
\end{equation}
is a Banach space, said \textit{$M-$homogeneous Fourier Lebesgue space} of order $s$ and exponent $p$. 
\end{definition}
Notice that for $p=2$, Plancherel's Theorem yields that $\mathcal FL^2_{s,M}$ reduces to the $M-$homogeneous {\em Sobolev space} of order $s$, see \cite{GM-06} for details; in this case $\mathcal FL^2_{s,M}$ inherits from  $L^2(\mathbb R^n)$ the structure of Hilbert space, with  inner product
$
(u,v)_{\mathcal FL^2_{s,M}}:=(\langle\cdot\rangle^s_M\hat u,\langle\cdot\rangle^s_M\hat v)_{L^2}\,,
$
where $(f,g)_{L^2}$ is the standard inner product in $L^2(\mathbb R^n)$.
%\newline
%For fixed $x_0\in \mathbb R^n$ we say that $u\in \mathcal F L^p_{s, M, \textup{loc}}(x_0)$ if for some $\phi\in C^\infty_0(\mathbb R^n)$, such that           $ \phi(x_0)\neq 0$, we have $\phi u\in \mathcal F L^p_{s, M}$.
\newline
In the case of $M=(1,\dots,1)$, $\mathcal FL^p_{s,M}$ reduces to the {\em homogeneous} Fourier Lebesgue space $\mathcal FL^p_s$ and, in particular, we set $\mathcal FL^p:=\mathcal FL^p_0$.

\smallskip

The {\em pseudodifferential operator} $a(x,D)$ with general symbol  $a(x,\xi)\in\mathcal S^\prime(\mathbb R^{2n})$  and standard Kohn--Nirenberg quantization is the bounded linear map
\begin{equation}\label{psdo_formula}
\begin{array}{lcll}
a(x,D):&\mathcal S(\mathbb R^n)&\rightarrow&\mathcal S^\prime(\mathbb R^n)\\
&u&\rightarrow & a(x,D)u(x):=(2\pi)^{-n}\int e^{ix\cdot\xi}a(x,\xi)\widehat u(\xi)d\xi\,, 
\end{array}
\end{equation}
where the integral above must be understood in the distributional sense.
\newline
We introduce here some classes of symbols $a(x,\xi)$, of $M-$homogeneous type, with limited Fourier Lebesgue smoothness with respect to the space variable $x$.
\begin{definition}\label{def_bdd_smooth_symb}
For $m, r\in\mathbb R$, $\delta\in[0,1]$, $p\in[1,+\infty]$ and $N$ a strictly positive integer number, we denote by $\mathcal FL^{p}_{r,M}S^m_{M,\delta}(N)$ the set of $a(x,\xi)\in\mathcal S^\prime(\mathbb R^{2n})$ such that for all $\alpha\in\mathbb Z^n_+$ with $\vert\alpha\vert\le N$, the map $\xi\mapsto\partial^\alpha_\xi a(\cdot,\xi)$ is measurable  in $\mathbb R^n$ with values in $\mathcal FL^p_{r,M}\cap\mathcal FL^1$ and satisfies for any $\xi\in \mathbb R^n$ the following estimates
\begin{equation}\label{FL_est}
\Vert\partial^\alpha_\xi a(\cdot,\xi)\Vert_{\mathcal FL^1}\le C\langle\xi\rangle_M^{m-\langle\alpha, 1/M\rangle}\,,
\end{equation}
\begin{equation}\label{X_est}
\Vert\partial^\alpha_\xi a(\cdot,\xi)\Vert_{\mathcal FL^p_{r,M}}\le C\langle\xi\rangle_M^{m-\langle\alpha, 1/M\rangle+\delta\left(r-\frac{n}{\mu_\ast q}\right)}\,,
\end{equation}
where $C$ is a suitable positive constant and $q$ is the conjugate exponent of $p$.
\end{definition}
When $\delta=0$, we will write for shortness $\mathcal FL^p_{r,M}S^m_{M}(N)$.
\newline
The first result concerns with the Fourier Lebesgue boundedness of pseudodifferential operators with symbol in $\mathcal FL^p_{s,M}S^m_{M,\delta}(N)$.
\begin{theorem}\label{FL_thm}
Consider $p\in[1,+\infty]$, $q$ its  conjugate exponent, $r>\frac{n}{\mu_\ast q}$, $\delta\in[0,1]$, $m\in\mathbb R$, $N>n+1$ and $a(x,\xi)\in\mathcal FL^p_{r,M}S^m_{M,\delta}(N)$. Then for all $s\in\left](\delta-1)(r-\frac{n}{\mu_\ast q}),r\right[$ the pseudodifferential operator $a(x,D)$ extends to a bounded operator
\begin{equation}
a(x,D):\mathcal FL^p_{s+m,M}\rightarrow\mathcal FL^p_{s,M}\,.
\end{equation}
If $\delta<1$ then the above continuity property holds true also for $s=r$.
\end{theorem}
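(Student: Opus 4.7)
The plan is to pass to the Fourier side, realize $a(x,D)$ as an integral operator against the partial Fourier transform of the symbol, and reduce the inequality to $L^p$-boundedness of the resulting kernel. A smooth quasi-homogeneous cutoff then splits the kernel into two pieces, whose bounds rely respectively on the $\mathcal FL^1$ estimate \eqref{FL_est} and the $\mathcal FL^p_{r,M}$ estimate \eqref{X_est}.

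Starting from
\begin{equation}
\mathcal F[a(x,D)u](\xi)=(2\pi)^{-n}\int\hat a(\xi-\eta,\eta)\hat u(\eta)\,d\eta,\qquad\hat a(\zeta,\eta):=\mathcal F_x a(\cdot,\eta)(\zeta),
\end{equation}
one is led to study $L^p(d\xi)$-boundedness of the integral operator with kernel
\begin{equation}
K(\xi,\eta):=\langle\xi\rangle_M^s\,\hat a(\xi-\eta,\eta)\,\langle\eta\rangle_M^{-(s+m)}
\end{equation}
acting on $L^p(d\eta)$ (applied to $v(\eta):=\langle\eta\rangle_M^{s+m}\hat u(\eta)$). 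I would introduce a smooth quasi-homogeneous cutoff $\chi(\zeta,\eta)$ equal to $1$ on $\{\langle\zeta\rangle_M\le\varepsilon\langle\eta\rangle_M\}$ and supported in $\{\langle\zeta\rangle_M\le 2\varepsilon\langle\eta\rangle_M\}$, with $\varepsilon$ chosen, via the $M$-subadditivity of Lemma \ref{lemma_M-h-w}, so that $\langle\xi\rangle_M\sim\langle\eta\rangle_M$ on $\mathrm{supp}(\chi)$ and $\langle\xi\rangle_M\lesssim\langle\xi-\eta\rangle_M$ outside. Write $K=K_1+K_2$ accordingly.

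For the low-frequency piece $K_1$, the weights cancel and $|K_1(\xi,\eta)|\lesssim\langle\eta\rangle_M^{-m}|\hat a(\xi-\eta,\eta)|$. The Schur/Young integrals are controlled by \eqref{FL_est}: at $\alpha=0$ it gives $\|\hat a(\cdot,\eta)\|_{L^1}\lesssim\langle\eta\rangle_M^m$, handling the $\xi$-integral for fixed $\eta$, while at higher orders $|\alpha|\le N>n+1$ it provides enough polynomial decay of $\hat a$ in its second slot (through the identification $\partial_\eta^\alpha\hat a=\mathcal F_x\partial_\eta^\alpha a$) to yield $L^1$-integrability in $\eta$ uniformly in $\xi$. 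This delivers $L^p$-boundedness of the $K_1$-operator with no restriction on $s$. For the high-frequency piece $K_2$, where $\langle\zeta\rangle_M\gtrsim\langle\eta\rangle_M$, the efficient tool is \eqref{X_est}: writing $|\hat a(\zeta,\eta)|=\langle\zeta\rangle_M^{-r}\bigl(\langle\zeta\rangle_M^r|\hat a(\zeta,\eta)|\bigr)$ and applying Hölder in $\zeta$ with conjugate exponents $p,q$ together with \eqref{X_est} gives a Schur bound of order $\langle\eta\rangle_M^{(\delta-1)\rho}$ where $\rho:=r-n/(\mu_\ast q)$; this is uniform in $\eta$ precisely when $\delta\le 1$ and $s>(\delta-1)\rho$, which yields the lower endpoint of the admissible interval.

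The main obstacle is reaching the upper endpoint $s<r$, and in particular $s=r$ when $\delta<1$: the crude Hölder on $K_2$ only delivers integrability of $\langle\zeta\rangle_M^{(s-r)q}$ for $s<\rho$, a strictly smaller range. Closing this gap requires a dyadic refinement of $\hat a(\zeta,\eta)$ in the $\zeta$-variable, built on the quasi-homogeneous Littlewood--Paley characterization of $\mathcal FL^p_{r,M}$ developed in Section \ref{FL-sp_sct}: on each shell a Bernstein-type inequality converts the $L^p$-control of $\langle\cdot\rangle_M^r\hat a(\cdot,\eta)$ into a sharper $L^1$-bound carrying $(1-\delta)\rho$ of slack, and a careful shell-by-shell summation, matched against the $K_1$ estimate at the same scale, extends the range up to $s<r$. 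The endpoint $s=r$ for $\delta<1$ is then reached by exploiting the strict positivity of $(1-\delta)\rho$ to keep the shell sum convergent at the border. This fine dyadic bookkeeping, together with the interaction between the two non-comparable symbol estimates, is the most delicate step.
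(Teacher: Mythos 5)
Your overall strategy---pass to the Fourier side, bound the kernel $K(\xi,\eta)=\langle\xi\rangle_M^s\hat a(\xi-\eta,\eta)\langle\eta\rangle_M^{-s-m}$ by a two-way cut comparing $\langle\xi-\eta\rangle_M$ with $\langle\eta\rangle_M$, and use \eqref{FL_est} on the low piece and \eqref{X_est} on the high piece---is in the right spirit, but it leaves two concrete steps unproved. First, on $K_1$ the Schur integral in $\eta$ at fixed $\xi$ is not a single-slot integral: the bound $\|\partial_\eta^\alpha\hat a(\cdot,\eta)\|_{L^1}\lesssim\langle\eta\rangle_M^{m-\langle\alpha,1/M\rangle}$ for $|\alpha|\le N$ controls $\int|\hat a(\zeta,\eta)|\,d\zeta$ at each fixed $\eta$ (the other half of the Schur test), but does not by itself bound $\int|\hat a(\xi-\eta,\eta)|\langle\eta\rangle_M^{-m}\,d\eta$ uniformly in $\xi$: as $\eta$ varies, both slots of $\hat a$ move simultaneously, and the identification $\partial_\eta^\alpha\hat a=\mathcal F_x\partial_\eta^\alpha a$ does not create decay in $\eta$ at a fixed first argument. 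Second---as you acknowledge yourself---your Hölder estimate on $K_2$ misses the entire range $s\in\left[r-\frac{n}{\mu_\ast q},\,r\right)$; the claimed ``dyadic refinement with a Bernstein inequality on each shell, carefully summed'' is precisely the hard part and is left unexecuted.

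The paper closes both gaps through a more structured decomposition. The Coifman--Meyer reduction to elementary symbols $\sum_h d_h(x)\psi_h(\xi)$ (Lemma \ref{el_symb_lemma}, where the $N>n+1$ derivatives in $\xi$ are spent) localizes the $\xi$-slot of the symbol on the $h$-th dyadic shell, so the troublesome integral in the dual variable of $u$ collapses to a sum over $h$ and the $\mathcal FL^1$/$\mathcal FL^p_{r,M}$ bounds on the coefficients $d_h$ become directly usable. Each $d_h$ is then further decomposed dyadically in its own frequency variable, indexed by $\ell$, and the operator splits three ways into $T_1$ ($\ell\ll h$), $T_2$ ($\ell\approx h$), $T_3$ ($h\ll\ell$). $T_1$ matches your $K_1$ and requires no constraint on $s$ (Lemma \ref{T1_lemma}); $T_2$ yields the lower endpoint $s>(\delta-1)\left(r-\frac{n}{\mu_\ast q}\right)$ via Young (Lemma \ref{T2_lemma}); and $T_3$ is estimated by a discrete convolution against the geometric sequence $2^{(s-r)k}\in\ell^1$, which is exactly what enforces $s<r$, with the endpoint $s=r$ for $\delta<1$ recovered by a small $\varepsilon$-sacrifice (Lemma \ref{T3_lemma}). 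Your two-way cut merges $T_2$ with $T_3$, which is why the crude Hölder bound caps out at $s<r-\frac{n}{\mu_\ast q}$. Your diagnosis of where the difficulty lies is correct; what is missing is the explicit double dyadic bookkeeping that separates the two regimes.
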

The proof is given in the next Sect.\ref{PROOF1}.
\begin{remark}\label{obs_thm}
{\rm Observe that in the case of $\delta=0$, the above result was already proved in \cite[Proposition 6]{GM-18}, where a much more general setting than the framework of $M-$homoge\-neo\-us symbols  was considered and very weak growth conditions on symbols with respect to $\xi$ were assumed.}
\end{remark}

\subsection{$M$-homogeneous smooth symbols}\label{hsy}
Smooth symbols satisfying $M-$quasi-homogenous decay of derivatives at infinity are useful for the study of microlocal propagation of singularities for pseudodifferential operators with non smooth symbols and nonlinear PDE.
\begin{definition}\label{sm-sym_dfn}
For $m\in\mathbb R$ and $\delta\in[0,1]$, $S^m_{M,\delta}$ is the class of the functions $a(x,\xi)\in C^\infty(\mathbb R^{2n})$ such that for all $\alpha,\beta\in\mathbb Z^n_+$, and $x,\xi\in\mathbb R^n$
\begin{equation}\label{sm-sym_est}
\vert\partial^\alpha_\xi\partial^\beta_x a(x,\xi)\vert\le C_{\alpha,\beta}\langle\xi\rangle_{M}^{m-\langle\alpha,1/M\rangle+\delta\langle\beta,1/M\rangle}\,,
\end{equation}
for a suitable constant $C_{\alpha,\beta}$.
\end{definition}
In the following, we set for shortness $S_{M}:=S_{M,0}$. Notice that for any $\delta\in[0,1]$ we have
$
\bigcap\limits_{m\in\mathbb R}S^m_{M,\delta}\equiv S^{-\infty}\,,
$
where $S^{-\infty}$ denotes the set of  the functions $a(x,\xi)\in C^\infty(\mathbb R^{2n})$ such that for all $\mu>0$ and $\alpha,\beta\in\mathbb Z^n_+$
\begin{equation}\label{sm-sym_inf_est}
\vert\partial^\alpha_\xi\partial^\beta_x a(x,\xi)\vert\le C_{\mu,\alpha,\beta}\langle\xi\rangle^{-\mu}\,,\quad x,\xi\in\mathbb R^n\,,
\end{equation}
for a suitable positive constant $C_{\mu,\alpha,\beta}$.
\smallskip

We recall that a pseudodifferential operator $a(x,D)$ with symbol $a(x,\xi)\in S^{-\infty}$ is {\em smoothing}, namely it extends as a linear bounded operator from
$\mathcal S^\prime(\mathbb R^n)$ ($\mathcal E^\prime(\mathbb R^n)$) to     
$\mathcal P(\mathbb R^n)$ ($\mathcal S(\mathbb R^n)$),
where $\mathcal P(\mathbb R^n)$ and $\mathcal E^\prime(\mathbb R^n)$ are respectively the space of smooth functions polynomially bounded together with their derivatives and the space of compactly supported distributions.

\smallskip

As long as $0\le\delta<\mu_\ast/\mu^\ast$, for the $M-$homogeneous classes $S^m_{M,\delta}$ a complete symbolic calculus is available, see e.g. Garello - Morando \cite{GM-08, GM-09} for details.

\smallskip
Pseudodifferential operators with symbol in $S^0_M$ are known to be locally bounded on Fourier Lebesgue spaces $\mathcal FL^p_{s,M}$ for all $s\in\mathbb R$ and $1\le p\le +\infty$, see e.g. Tachizawa \cite{TA-94} and Rochberg-Tachizawa \cite{ROTA-98}. For continuity of Fourier Integral Operators on Fourier Lebesgue spaces see \cite{C_N_R}.  On the other hand, by easily adapting the arguments used in the homogeneous case $M=(1,\dots,1)$ by Sugimoto-Tomita \cite{SUTO08}, it is known that pseudodifferential operators with symbol in $S^0_{M,\delta}$ are not locally bounded on $\mathcal FL^p_{s,M}$, as long as $0<\delta\le 1$ and $p\neq	 2$\footnote{Actually the boundedness result given in \cite{TA-94, ROTA-98, SUTO08} are stated in the framework of {\em modulation spaces}. Nevertheless, it also tells about the local behavior on Fourier Lebesgue spaces, as Fourier-Lebesgue and modulation spaces are {\em locally} the same, see \cite{Gr-book} for details.}.
\newline
For this reason we introduce suitable subclasses of $M-$homogeneous symbols in $S^m_{M,\delta}$, $\delta\in[0,1]$, whose related pseudodifferential operators are (locally) well-behav\-ed on weighted Fourier Lebesgue spaces. These symbols will naturally come into play in  the splitting method presented in Sect. \ref{propation_sing_FL_sct_0} and used in Sect. \ref{propation_sing_FL_sct} to derive local and microlocal Fourier Lebesgue regularity of linear PDE with non smooth coefficients.\\
In view of such applications, it is useful that the vector $M=(\mu_1,\dots,\mu_n)$ has strictly positive integer components. Let us assume it for the rest of Sect. \ref{int_sct}, unless otherwise explicitly stated.\\
In the following $t_+:= \max\{t,0\}$, $[t]:=\max\{n\in \mathbb Z ; n\leq t\}$ are respectively the {\em positive part} and the {\em integer part} of $t\in \mathbb R$.
\begin{definition}\label{sm-sym_k_dfn}
For $m\in\mathbb R$, $\delta\in[0,1]$ and $\kappa>0$ we denote by $S^m_{M,\delta, \kappa}$ the class of all functions $a(x,\xi)\in C^\infty(\mathbb R^{2n})$ such that for $\alpha,\beta\in\mathbb Z^n_+$ and $x,\xi\in\mathbb R^{n}$
\begin{eqnarray}
\vert \partial^\alpha_\xi \partial^\beta_x a(x,\xi)\vert\le C_{\alpha,\beta}\langle\xi\rangle_M^{m-\langle\alpha,1/M\rangle+\delta\left(\langle\beta,1/M\rangle-\kappa\right)_+}\,,\quad\mbox{if}\,\,\,\langle\beta,1/M\rangle\neq\kappa\,,\label{sm-sym_k_est_1}\\
\vert \partial^\alpha_\xi \partial^\beta_x a(x,\xi)\vert\le C_{\alpha,\beta}\langle\xi\rangle_M^{m-\langle\alpha,1/M\rangle}\log\left(1+\langle\xi\rangle_M^{\delta}\right)\,,\quad\mbox{if}\,\,\,\langle\beta,1/M\rangle=\kappa\label{sm-sym_k_est_2}
\end{eqnarray}
holds with some positive constant $C_{\alpha,\beta}$. 
\end{definition}
\begin{remark}\label{rmk_sym_k}
{\rm It is easy to see that for any $\kappa>0$, the symbol class $S^m_{M,\delta,\kappa}$ defined above is included in $S^m_{M,\delta}$ for all $m\in\mathbb R$ and $\delta\in[0,1]$ (notice in particular that $S^m_{M,0,\kappa}\equiv S^m_{M,0}\equiv S^m_M$ whatever is $\kappa>0$). Compared to Definition \ref{sm-sym_dfn}, symbols in $S_{M,\delta,\kappa}$ display a better behavior face to the growth at infinity of derivatives; 
the loss of decay $\delta\langle \beta, 1/M\rangle$, connected  to the $x$ derivatives when $\delta>0$, does not  occur when the $M$- order of $\beta$ is less than $\kappa$; for the subsequent derivatives the loss is decreased of the fixed amount $\kappa$.
\newline

Since for $M=(\mu_1,\dots, \mu_n)$, with positive integer components, the $M-$ order of any multi-index $\alpha\in\mathbb Z^n_+$ is a rational number, we notice that symbol derivatives never exhibit the "logarithmic growth" \eqref{sm-sym_k_est_2} for an irrational $\kappa>0$.}
\end{remark}
\begin{theorem}\label{FL_sym_cor}
Assume that
\begin{equation}\label{kappa_ass}
\kappa>[n/\mu_\ast]+1
\end{equation}
Then for all $p\in[1,+\infty]$ a pseudodifferential operator with symbol $a(x,\xi)\in S^m_{M,\delta,\kappa}$, satisfying the \textit{localization} condition
\begin{equation}\label{localization}
{\rm supp}\,a(\cdot,\xi)\subseteq \mathcal K\,,\quad\forall\,\xi\in\mathbb R^n\, ,
\end{equation}
for a suitable compact set $\mathcal K\subset\mathbb R^n$, extends as a linear bounded operator
\begin{eqnarray}
& &a(x,D):\mathcal FL^p_{s+m,M}\rightarrow\mathcal FL^p_{s,M}\,,\quad\forall\,s\in\mathbb R\,,\quad\mbox{if}\,\,\,0\le\delta<1\,, \label{FL_bdd}\\
& &a(x,D):\mathcal FL^p_{s+m,M}\rightarrow\mathcal FL^p_{s,M}\,,\quad\forall\,s>0\,,\quad\mbox{if}\,\,\,\delta=1\,.\label{FL_bdd1}
\end{eqnarray}
\end{theorem}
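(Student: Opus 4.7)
The plan is to deduce Theorem \ref{FL_sym_cor} from Theorem \ref{FL_thm} by verifying that every $a(x,\xi)\in S^m_{M,\delta,\kappa}$ satisfying \eqref{localization} already lies in the non-smooth symbol class $\mathcal FL^p_{r,M}S^m_{M,\delta}(N)$ for every $r>n/(\mu_\ast q)$ and every integer $N>n+1$. Once this inclusion is in hand, Theorem \ref{FL_thm} yields boundedness $\mathcal FL^p_{s+m,M}\to\mathcal FL^p_{s,M}$ for every $s\in\,](\delta-1)(r-n/(\mu_\ast q)),r[$. Letting $r\to+\infty$ makes this interval exhaust all of $\mathbb R$ when $0\le\delta<1$, yielding \eqref{FL_bdd}; when $\delta=1$ the interval reduces to $]0,r[$, and the union over $r>0$ gives \eqref{FL_bdd1}.

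The key ingredient I would establish first is a quasi-homogeneous Fourier-decay estimate for compactly supported smooth functions: if $f\in C^\infty_c(\mathbb R^n)$ is supported in a fixed compact $\mathcal K$, then for every $r\ge 0$ and every admissible $R>r+\frac{1}{p}\sum_j 1/\mu_j$,
\[
\|f\|_{\mathcal FL^p_{r,M}}\le C_{\mathcal K,r,p,R}\sum_{\langle\beta,1/M\rangle\le R}\|\partial^\beta f\|_{L^\infty(\mathbb R^n)},
\]
with the analogous bound for $\mathcal FL^1$ requiring only $R>\sum_j 1/\mu_j$. This follows from the integration-by-parts identity $\xi^\beta\hat f=(-i)^{|\beta|}\widehat{\partial^\beta f}$, the quasi-homogeneous comparison $\langle\xi\rangle_M^R\lesssim 1+\sum_j|\xi_j|^{R\mu_j}$ (for $R\mu_j\in\mathbb N$, always achievable since $\mu_j$ are integers), and the integrability threshold $\int\langle\xi\rangle_M^{-s}\,d\xi<\infty$ iff $s>\sum_j 1/\mu_j$.

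Applying this lemma to $f(x)=\partial^\alpha_\xi a(x,\xi)$ at fixed $\xi$ and substituting the pointwise bounds \eqref{sm-sym_k_est_1}--\eqref{sm-sym_k_est_2}, each summand is controlled by $\langle\xi\rangle_M^{m-\langle\alpha,1/M\rangle+\delta(\langle\beta,1/M\rangle-\kappa)_+}$, the logarithmic correction at $\langle\beta,1/M\rangle=\kappa$ being absorbed by an arbitrarily small enlargement of $R$. The assumption $\kappa>[n/\mu_\ast]+1$ then permits the choice $R=[n/\mu_\ast]+1<\kappa$ for the $\mathcal FL^1$ estimate, so the $\delta$-loss vanishes and \eqref{FL_est} is obtained. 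For \eqref{X_est} I pick $R$ just above $r+\frac{1}{p}\sum_j 1/\mu_j$, incurring a loss $\delta(R-\kappa)_+$; since $\kappa>n/\mu_\ast=n/(\mu_\ast p)+n/(\mu_\ast q)\ge\frac{1}{p}\sum_j 1/\mu_j+n/(\mu_\ast q)$, one has $R-\kappa<r-n/(\mu_\ast q)$, giving \eqref{X_est}. The main obstacle is precisely this bookkeeping: the hypothesis $\kappa>[n/\mu_\ast]+1$ turns out to be the sharp condition making compatible the two thresholds that govern Fourier $L^p$-integrability of the $M$-weight on the one hand, and the loss-free regime of $x$-derivatives in $S^m_{M,\delta,\kappa}$ on the other.
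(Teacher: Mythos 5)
Your overall plan follows the same skeleton as the paper's own proof: reduce Theorem~\ref{FL_sym_cor} to Theorem~\ref{FL_thm} by showing that a localized symbol of class $S^m_{M,\delta,\kappa}$ lies in the Fourier–Lebesgue symbol class $\mathcal FL^p_{r,M}S^m_{M,\delta}(N)$, and then let $r\to\infty$. The paper, however, verifies this membership only for the endpoint exponents $p=1$ and $p=\infty$ (taking $r$ a positive integer) and then passes to general $p$ by complex interpolation (Riesz--Thorin). You attempt to prove the membership directly and uniformly in $p\in[1,\infty]$, and this is where a genuine gap appears.

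The integration-by-parts bound $|\hat f(\eta)|\lesssim\langle\eta\rangle_M^{-R}\max_{\langle\beta,1/M\rangle\le R}\|\partial^\beta f\|_{L^\infty}$, derived from $\langle\eta\rangle_M^R\lesssim 1+\sum_j|\eta_j|^{R\mu_j}$, is only usable when $R\mu_j\in\mathbb N$ for every $j$, so $R$ is constrained to the discrete set $\frac{1}{\gcd(\mu_j)}\mathbb Z_+$ with spacing $\Delta=1/\gcd(\mu_j)$. You propose to take $R$ ``just above'' the integrability threshold $r+\frac1p\sum_j 1/\mu_j$, but the smallest admissible $R$ above that threshold can be as much as $\Delta$ away from it. At the same time you need $R-\kappa\le r-\frac{n}{\mu_\ast q}$, which puts $R$ in the window $\bigl(r+\frac1p\sum_j 1/\mu_j,\; r-\frac{n}{\mu_\ast q}+\kappa\bigr]$. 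Its width equals $\kappa-\frac1p\sum_j 1/\mu_j-\frac{n}{\mu_\ast q}\ge\kappa-\frac{n}{\mu_\ast}>1-\{n/\mu_\ast\}$ (where $\{\cdot\}$ is the fractional part), which can be strictly smaller than $\Delta$ when $n/\mu_\ast\notin\mathbb Z$. Thus for some combinations of $p$, $r$ the window contains no admissible $R$, and your claim that $a\in\mathcal FL^p_{r,M}S^m_{M,\delta}(N)$ for \emph{every} $r>\frac{n}{\mu_\ast q}$ fails. A concrete obstruction: $n=3$, $M=(2,3,2)$ (so $\gcd(\mu_j)=1$, $\mu_\ast=2$, $n/\mu_\ast=3/2$), $\kappa$ just above $2$, $p$ large — then for suitable $r$ the window has width below $1$ and misses every integer. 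Note also that your final inequality $\kappa>n/\mu_\ast$ is weaker than the hypothesis $\kappa>[n/\mu_\ast]+1$ and is not sufficient for your argument; the stronger hypothesis is exactly what the paper's endpoint bookkeeping consumes (for $p=1$, the window $\bigl(\sum_j 1/\mu_j,\kappa\bigr]$ always contains the integer $[n/\mu_\ast]+1$ precisely because $\kappa>[n/\mu_\ast]+1$; for $p=\infty$, the window $[r,\,r-\frac{n}{\mu_\ast}+\kappa]$ trivially contains the integer $r$ itself).

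The gap is reparable: the conclusion only requires membership along some unbounded sequence of values of $r$, not for every $r$, and by shifting $r$ one can arrange that the window do contain an admissible $R$; passing to the union of the $s$-intervals from Theorem~\ref{FL_thm} over such $r$ then yields the theorem. But this extra care is absent from your argument, and the uniform-in-$p$ membership claim as stated is false. The paper's route via $p\in\{1,\infty\}$ with integer $r$ and Riesz--Thorin is precisely the device that sidesteps this combinatorics — your direct approach trades the interpolation step for a harder choice of $R$, which must be handled carefully when $\gcd(\mu_j)$ is large or $n/\mu_\ast$ is close to an integer.
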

The proof of Theorem \ref{FL_sym_cor} is postponed to Sect. \ref{cont_smooth_symb_sct}.
\smallskip

Taking $\delta=0$, we directly obtain the boundedness property \eqref{FL_bdd}, for any pseudodifferential operator with symbol in $S^m_M$.
\newline

The following result concerning the Fourier multipliers readily follows from H\"older's inequaltity.
\begin{proposition}\label{Fm_prop}
Let a tempered distribution $a(\xi)\in\mathcal S^\prime(\mathbb R^n)$ satisfy 
\[
\langle\xi\rangle_M^{-m}a(\xi)\in L^\infty(\mathbb R^n)
\]
for $m\in\mathbb R$. Then the Fourier multiplier $a(D)$ extends as a linear bounded operator from $\mathcal FL^p_{s+m,M}$ to $\mathcal FL^p_{s,M}$, for all $p\in[1,+\infty]$ and $s\in\mathbb R$.
\end{proposition}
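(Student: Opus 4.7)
The plan is to reduce everything to a pointwise estimate on the Fourier transform side. First I would observe that the hypothesis $\langle\cdot\rangle_M^{-m}a\in L^\infty(\mathbb R^n)$, combined with the polynomial growth bound \eqref{PG} of the $M$-weight, forces $a(\xi)$ to coincide a.e.\ with a measurable function of polynomial growth. In particular, for $u\in\mathcal S(\mathbb R^n)$ the product $a(\xi)\widehat u(\xi)$ is a well-defined rapidly decreasing measurable function, so the multiplier can be defined unambiguously by
\[
a(D)u:=\mathcal F^{-1}\bigl(a\,\widehat u\bigr)\in\mathcal S'(\mathbb R^n).
\]

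The heart of the argument is the factorization
\[
\langle\xi\rangle_M^s\,a(\xi)\widehat u(\xi)=\bigl(\langle\xi\rangle_M^{-m}a(\xi)\bigr)\cdot\bigl(\langle\xi\rangle_M^{s+m}\widehat u(\xi)\bigr),
\]
where the first factor belongs to $L^\infty(\mathbb R^n)$ by assumption, while the second is in $L^p(\mathbb R^n)$ with $L^p$-norm equal to $\Vert u\Vert_{\mathcal FL^p_{s+m,M}}$. Applying the degenerate case of H\"older's inequality (the $L^\infty\cdot L^p\hookrightarrow L^p$ embedding) immediately yields
\[
\Vert a(D)u\Vert_{\mathcal FL^p_{s,M}}\le\Vert\langle\cdot\rangle_M^{-m}a\Vert_{L^\infty}\,\Vert u\Vert_{\mathcal FL^p_{s+m,M}}.
\]

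To conclude, for $p\in[1,+\infty[$ I would extend $a(D)$ by density of $\mathcal S(\mathbb R^n)$ in $\mathcal FL^p_{s+m,M}$ to a bounded linear operator on the whole space, with operator norm controlled by $\Vert\langle\cdot\rangle_M^{-m}a\Vert_{L^\infty}$. For the endpoint $p=+\infty$, where density fails, I would instead define $a(D)u$ directly via its Fourier transform: for $u\in\mathcal FL^\infty_{s+m,M}$ the distribution $\widehat u$ is a measurable function with $\langle\cdot\rangle_M^{s+m}\widehat u\in L^\infty$, so $a\widehat u$ is measurable and the same pointwise factorization delivers $\langle\cdot\rangle_M^s a\widehat u\in L^\infty$; setting $a(D)u:=\mathcal F^{-1}(a\widehat u)\in\mathcal S'(\mathbb R^n)$ gives the required extension with the same norm bound.

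No serious obstacle is expected, since the entire argument rests on the trivial $L^\infty$--$L^p$ bound; the only point demanding mild care is the measurability and polynomial-growth interpretation of the product $a\widehat u$, which is standard once the $L^\infty$ hypothesis on $\langle\cdot\rangle_M^{-m}a$ is exploited.
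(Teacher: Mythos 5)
Your proof is correct and is exactly the argument the paper has in mind when it remarks that the statement ``readily follows from H\"older's inequality''; the factorization $\langle\xi\rangle_M^s\,a\,\widehat u=(\langle\xi\rangle_M^{-m}a)(\langle\xi\rangle_M^{s+m}\widehat u)$ with the degenerate $L^\infty\cdot L^p\hookrightarrow L^p$ bound is the whole content. One small remark: the density step for $p<\infty$ is actually dispensable, since by Definition \ref{def_FL} every $u\in\mathcal FL^p_{s+m,M}$ already has $\widehat u$ a measurable function with $\langle\cdot\rangle_M^{s+m}\widehat u\in L^p$, so the same pointwise argument you use at $p=\infty$ defines $a(D)u$ directly for all $p$.
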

%%%%%%%%%%%%
\subsection{Microlocal propagation of  Fourier Leb\-es\-gue singularities}\label{pfl}
Consider a vector $M=(\mu_1,\dots,\mu_n)\in\mathbb{N}^n$ and set $T^{\circ}\mathbb{R}^n:=\mathbb{R}^n\times(\mathbb{R}^n\setminus\{0\})$.

We say that a set $\Gamma_M\subset\mathbb{R}^n\setminus\{0\}$ is $M-$\textit{conic}, if $t^{1/M}\xi\in\Gamma_M$ for any $\xi\in \Gamma_M$ and $t>0$.
\begin{definition}\label{pfl_def_WF}
For $s \in\mathbb R$, $p\in [1,+\infty]$,  $u\in \mathcal S'(\mathbb R^n)$, we say that  $(x_0, \xi^0)\in T^{\circ}\mathbb R^n$ does not belong to the M-conic \textit{wave front set} $WF_{\mathcal F L^p_{s, M}}u$, if there exist $\phi \in C^\infty_0(\mathbb R^n)$, $\phi(x_0)\neq 0$, and a symbol $\psi(\xi)\in S^0_M$, satisfying $\psi(\xi)\equiv 1$ on $\Gamma_M\cap \{\vert \xi\vert_M>\varepsilon_0\}$, for suitable M-conic neighborhood $\Gamma_M\subset \mathbb R^n\setminus\{ 0\}$ of $\xi^0$ and  $0<\varepsilon _0< \vert \xi^0\vert _M$, such that
\begin{equation}\label{WFL1}
\psi(D)(\phi u)\in \mathcal FL^p_{s,M}.
\end{equation}
We say in this case that $u$ is $FL^p_{s,M}-$ microlocally regular at the point $(x_0,\xi^0)$ and we write $u\in \mathcal F L^p_{s, M, \textup{mcl}}(x_0,\xi^0)$.
\newline
We say that $u\in\mathcal S^\prime(\mathbb R^n)$ belongs to $\mathcal FL^p_{s,M,{\rm loc}}(x_0)$ if there exists a smooth function $\phi\in C^\infty_0(\mathbb R^n)$ satisfying $\phi(x_0)\neq 0$ such that
\[
\phi u\in\mathcal FL^p_{s,M}\,.
\]
\end{definition}
\begin{remark}\label{rmk_mcl_FL}
{\rm In view of Definition \ref{def_FL}, it is easy to verify that $u\in\mathcal FL^p_{s,M, {\rm mcl}}(x_0,\xi^0)$ if and only if
\begin{equation}\label{FL_mcl}
\chi_{\varepsilon_0,\Gamma_M}\,\langle\cdot\rangle_M^r\widehat{\phi u}\in L^p(\mathbb R^n)\,,
\end{equation}
where $\phi$ and $\Gamma_M$ are considered as in Definition \ref{pfl_def_WF} and $\chi_{\varepsilon_0,\Gamma_M}$ is the characteristic function of $\Gamma_M\cap\{\vert\xi\vert_M>\varepsilon_0\}$. 
}
\end{remark}
\begin{definition}\label{Melliptic} We say that a symbol $a(x,\xi)\in S^m_{M,\delta}$ is microlocally $M-$ellip\-tic at $(x_0,\xi^0)\in T^{\circ}\mathbb{R}^n$ if there exist an open neighborhood $U$ of $x_0$ and an $M-$conic open neighborhood $\Gamma_M$ of $\xi^0$ such that for $c_0>0$, $\rho_0>0$:
\begin{equation}\label{Mellipticity}
|a(x,\xi)|\ge c_0\langle\xi\rangle_M^m\,,\quad (x,\xi)\in U\times\Gamma_M\,,\quad |\xi|_M>\rho_0\,.
\end{equation}
Moreover the characteristic set of $a(x,\xi)$ is ${\rm Char}(a)\subset T^{\circ}\mathbb{R}^n$ defined by
\begin{equation}
(x_0,\xi^0)\in T^{\circ}\mathbb{R}^n\setminus{\rm Char}(a)\,\,\Leftrightarrow\,\,\,\,a\,\, \text {is\,\,microlocally\,\,M-elliptic\,\,at}\,\,(x_0,\xi^0)\,.
\end{equation}
\end{definition}
\begin{theorem}\label{FL_cor}
for $0\le\delta<\mu_\ast/\mu^\ast$, $\kappa>[n/\mu_\ast]+1$, $m\in\mathbb R$, $a(x,\xi)\in S^m_{M,\delta,\kappa}$ and $u\in\mathcal S^\prime(\mathbb R^n)$, the following inclusions
\begin{equation*}
WF_{\mathcal FL^p_{s,M}}(a(x,D)u)\subset WF_{\mathcal \mathcal FL^p_{s+m,M}}(u)\subset WF_{\mathcal FL^p_{s,M}}(a(x,D)u)\cup{\rm Char}(a)
\end{equation*}
hold true for every $s\in\mathbb R$ and $p\in[1,+\infty]$.
\end{theorem}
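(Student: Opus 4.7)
The statement combines microlocal pseudolocality with microlocal elliptic regularity, adapted to the quasi-homogeneous Fourier Lebesgue scale. The ingredients I would use are Theorem \ref{FL_sym_cor} (Fourier Lebesgue boundedness of $x$-localized operators with symbol in $S^m_{M,\delta,\kappa}$), Proposition \ref{Fm_prop} (Fourier multipliers), and the symbolic calculus for $S^m_{M,\delta}$ that is available precisely under the standing assumption $0\le\delta<\mu_\ast/\mu^\ast$, cf.\ Section \ref{smooth_symb_sct}. I would prove the two inclusions in turn.

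For the pseudolocal inclusion, fix $(x_0,\xi^0)\notin WF_{\mathcal FL^p_{s+m,M}}(u)$ and let $\phi\in C^\infty_0$, $\psi\in S^0_M$, $\Gamma_M$ be associated cutoffs and cone in the sense of Definition \ref{pfl_def_WF}, so that $\psi(D)(\phi u)\in\mathcal FL^p_{s+m,M}$. I would pick secondary cutoffs $\phi_1,\psi_1$ with $\phi_1(x_0)\ne0$, $\mathrm{supp}\,\phi_1\subset\{\phi\equiv1\}$, $\psi_1\equiv1$ on a smaller $M$-conic neighbourhood of $\xi^0$ and $\mathrm{supp}\,\psi_1\subset\{\psi\equiv1\}$, then split
\[
\psi_1(D)\phi_1 a(x,D)u=\psi_1(D)\phi_1 a(x,D)\psi(D)(\phi u)+\psi_1(D)\phi_1 a(x,D)(1-\psi(D))(\phi u)+\psi_1(D)\phi_1 a(x,D)(1-\phi)u.
\]
The first summand belongs to $\mathcal FL^p_{s,M}$ directly: the composed symbol lies in $S^m_{M,\delta,\kappa}$ with compact $x$-support and Theorem \ref{FL_sym_cor} applies. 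The other two summands are smoothing: in the second, the $\xi$-supports of $\psi_1$ and $1-\psi$ are disjoint; in the third, the $x$-supports of $\phi_1$ and $1-\phi$ are disjoint, so the full asymptotic symbols vanish modulo $S^{-\infty}$ by the composition formula.

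For the elliptic inclusion, let $(x_0,\xi^0)\notin WF_{\mathcal FL^p_{s,M}}(a(x,D)u)\cup\mathrm{Char}(a)$. The $M$-ellipticity of $a$ at $(x_0,\xi^0)$ yields a microlocal parametrix $b(x,\xi)\in S^{-m}_{M,\delta,\kappa}$, compactly supported in $x$ near $x_0$ and $M$-conically supported near $\xi^0$, such that
\[
b(x,D)\,a(x,D)=\chi(x,D)+R,
\]
with $\chi\in S^0_M$ identically $1$ on a smaller conic neighbourhood of $(x_0,\xi^0)$ and $R$ smoothing; this is the standard iterative construction starting from $b_0=\widetilde\chi/a$ and cancelling successive remainders via the composition formula of Section \ref{smooth_symb_sct}, summed asymptotically inside $S^{-m}_{M,\delta,\kappa}$. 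Localizing the identity $\chi(x,D)u=b(x,D)a(x,D)u-Ru$ by $\phi_1,\psi_1$ as before, the hypothesis $a(x,D)u\in\mathcal FL^p_{s,M,\mathrm{mcl}}(x_0,\xi^0)$ combined with Theorem \ref{FL_sym_cor} applied to $b$ gives that $b(x,D)a(x,D)u$ is microlocally in $\mathcal FL^p_{s+m,M}$ at $(x_0,\xi^0)$, while $Ru$ is Schwartz; since $\chi$ is microlocally $1$ at that point, I conclude $u\in\mathcal FL^p_{s+m,M,\mathrm{mcl}}(x_0,\xi^0)$.

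The genuine technical obstacle is verifying that the refined subclass $S^m_{M,\delta,\kappa}$ is preserved under all the compositions and asymptotic summations invoked above, so that Theorem \ref{FL_sym_cor} can be applied at every step. This matters because the ambient calculus in $S^m_{M,\delta}$ alone does not guarantee Fourier Lebesgue boundedness once $\delta>0$ and $p\ne 2$, as discussed in Section \ref{hsy}; everything else—disjoint-support smoothing of compositions, Borel summation, off-diagonal kernel estimates—is standard once that closure property is in place.
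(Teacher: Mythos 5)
Your proposal is correct and follows essentially the same route as the paper: the paper packages the two inclusions as Propositions \ref{microsobolevaction} and \ref{mcl_Xs-reg} (and Corollary \ref{Xs_cor}) for an abstract $M$-microlocal scale $\{\mathcal X_s\}$, applies them to $\mathcal X_s=\mathcal FL^p_{s,M}$ using Theorem \ref{FL_sym_cor}, and relies on exactly the ingredients you name — the closed symbolic calculus in $S^m_{M,\delta,\kappa}$ under $0\le\delta<\mu_\ast/\mu^\ast$ (Proposition \ref{symb_calc_prop}) and the microlocal parametrix in $S^{-m}_{M,\delta,\kappa}$ (Proposition \ref{microparametrix}). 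You have also correctly identified the technical crux, namely that the class $S^m_{M,\delta,\kappa}$, and not merely $S^m_{M,\delta}$, must be preserved by all compositions and asymptotic sums so that Theorem \ref{FL_sym_cor} is applicable at each stage.
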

The proof of Theorem \ref{FL_cor} will be given in Sect. \ref{Xs_mcl_res_sct}.
\subsection{Linear PDE with non smooth coefficients}\label{linear_pde_sct}
In this section we discuss the  $M-$homogeneous Fourier Lebesgue microlocal regularity for linear PDE of the type
\begin{equation}\label{APPL:1}
a(x,D)u:=\sum_{\langle\alpha,1/M\rangle\leq 1}c_\alpha(x) D^\alpha u=f(x)\,,
\end{equation}
where $D^{\alpha}:=(-i)^{|\alpha|}\partial^{\alpha}$, while the coefficients $c_{\alpha}$, as well as the source $f$ in the right-hand side, are assumed to have suitable {\em local} $M-$homogeneous Fourier Lebesgue regularity\footnote{Without loss of generality, we assume that derivatives involved in the expression of the linear partial differential operator $a(x,D)$ in the left-hand side of \eqref{APPL:1} have $M-$ order not larger than one, since for any finite set $\mathcal A$ of multi-indices $\alpha\in\mathbb Z^n_+$ it is always possible selecting a vector $M=(\mu_1,\dots,\mu_n)\in\mathbb R_+^n$ so that $\langle\alpha,1/M\rangle\le 1$ for all $\alpha\in\mathcal A$.}.
\medskip
Let $(x_0,\xi^0)\in T^\circ\mathbb R^n$, $p\in[1,+\infty]$ and $r>\frac{n}{\mu_\ast q}+\left[\frac{n}{\mu_\ast}\right]+1$ be given. We make on $a(x,D)$ in \eqref{APPL:1} the following assumptions:
\begin{itemize}
\item[(i)] $c_\alpha\in\mathcal FL^p_{r,M,{\rm loc}}(x_0)$ for $\langle\alpha,1/M\rangle\le 1$;
\item[(ii)] $a_M(x_0,\xi^0)\neq 0$, where $a_M(x,\xi):=\!\!\!\!\!\sum\limits_{\langle\alpha,1/M\rangle=1}\!\!\!\!\!c_\alpha(x)\xi^\alpha$ is the  {\em $M-$principal symbol} of $a(x,D)$.
\end{itemize}
Arguing on continuity and $M-$homogeneity in $\xi$ of $a_M(x,\xi)$, it is easy to prove  that, for suitable open neighborhood $U\subset\mathbb R^n$ of $x_0$ and open $M-$conic neighborhood $\Gamma_M\subset\mathbb R^n\setminus\{0\}$ of $\xi^0$
\begin{equation}\label{principal}
a_M(x,\xi)\neq 0, \quad \text{for}\, (x,\xi)\in U\times \Gamma_M\,,
\end{equation}

\begin{theorem}\label{cor_APPLICATION}
Consider  $(x_0,\xi^0)\in T^\circ\mathbb R^n$, $p\in[1,+\infty]$ and $q$ its  conjugate exponent, $r>\frac{n}{\mu_\ast q}+\left[\frac{n}{\mu_\ast}\right]+1$ and
$0<\delta<\mu_\ast/\mu^{\ast}$. Assume moreover that 
\begin{equation}\label{ranges}
1+(\delta-1)\left(r-\frac{n}{\mu_\ast q}\right)<s\le r+1.
\end{equation}
Let $u\in\mathcal FL^p_{s-\delta\left(r-\frac{n}{\mu_\ast q}\right), M, {\rm loc}}(x_0)$ be a solution of the equation \eqref{APPL:1}, with given source $f\in\mathcal FL^p_{s-1, M, {\rm mcl}}(x_0,\xi^0)$. Then $u\in\mathcal FL^p_{s, M, {\rm mcl}}(x_0,\xi^0)$, that is 
\begin{equation}\label{cor_APPLICATION_1}
WF_{\mathcal F L^p_{s,M}}(u)\subset WF_{\mathcal F L^p_{s-1, M}}(f)\cup \textup{Char} (a). 
\end{equation}
\end{theorem}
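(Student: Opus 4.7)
The strategy is to split the non-smooth operator $a(x,D)$ into a smooth, microlocally $M$-elliptic piece to which Theorem~\ref{FL_cor} applies, plus a non-smooth remainder of order $1-\delta r'$ whose action on $u$ can be absorbed into the right-hand side via Theorem~\ref{FL_thm}. Throughout, set $r':=r-\frac{n}{\mu_\ast q}$; the assumption on $r$ ensures $r'>[n/\mu_\ast]+1$, which will be needed in order to pick an admissible $\kappa$.

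I would begin by localizing near $x_0$. Choose $\phi\in C^\infty_0(\mathbb R^n)$ equal to $1$ on a neighborhood $U$ of $x_0$ small enough that $a_M(x,\xi)\neq 0$ on $U\times\Gamma_M$ for some $M$-conic neighborhood $\Gamma_M$ of $\xi^0$, as guaranteed by \eqref{principal}. Replacing each $c_\alpha$ by $\phi c_\alpha$, the symbol $a(x,\xi)=\sum_{\langle\alpha,1/M\rangle\le 1}c_\alpha(x)\xi^\alpha$ becomes globally of class $\mathcal FL^p_{r,M}S^1_M(N)$ for every $N$. I would then invoke the Taylor-type splitting developed in Sect.~\ref{propation_sing_FL_sct_0} to decompose
\[
a(x,\xi)=a^\#(x,\xi)+a^\natural(x,\xi),
\]
with $a^\#\in S^1_{M,\delta,\kappa}$ for a fixed $\kappa$ chosen with $[n/\mu_\ast]+1<\kappa<r'$, and with $a^\natural\in\mathcal FL^p_{r,M}S^{1-\delta r'}_M(N)$. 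The construction should preserve the $M$-principal symbol of $a$ modulo terms of strictly lower $M$-order, so that $a^\#$ is microlocally $M$-elliptic at $(x_0,\xi^0)$ in the sense of Definition~\ref{Melliptic}.

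Next I would apply Theorem~\ref{FL_thm} to $a^\natural(x,D)$, which has order $1-\delta r'$. Asking for output regularity $s-1$ amounts to requiring $s-1\in((\delta-1)r',r]$, where the endpoint $r$ is allowed since $\delta<1$; this is precisely the range in \eqref{ranges}. The corresponding input space is $\mathcal FL^p_{s-\delta r',M}$, matching the assumed regularity of $u$. Hence, after the standard localization manipulations,
\[
a^\natural(x,D)u\in\mathcal FL^p_{s-1,M,{\rm loc}}(x_0)\subset\mathcal FL^p_{s-1,M,{\rm mcl}}(x_0,\xi^0),
\]
and, combined with the hypothesis $f\in\mathcal FL^p_{s-1,M,{\rm mcl}}(x_0,\xi^0)$, the identity $a^\#(x,D)u=f-a^\natural(x,D)u$ places its right-hand side in $\mathcal FL^p_{s-1,M,{\rm mcl}}(x_0,\xi^0)$. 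Because $a^\#\in S^1_{M,\delta,\kappa}$ with $0<\delta<\mu_\ast/\mu^\ast$ is microlocally $M$-elliptic at $(x_0,\xi^0)$, Theorem~\ref{FL_cor} yields $(x_0,\xi^0)\notin WF_{\mathcal FL^p_{s,M}}(u)$, which is the content of \eqref{cor_APPLICATION_1}.

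The main obstacle I expect is the Taylor splitting step together with its calibration to the function spaces involved. One has to verify (i) that $a^\#$ falls into the refined class $S^1_{M,\delta,\kappa}$ rather than merely $S^1_{M,\delta}$, so that Theorem~\ref{FL_cor} applies with no loss on $s$; (ii) that $a^\natural$ simultaneously retains the coefficients' Fourier Lebesgue regularity and exhibits the decay improvement by $\delta r'$ in $\xi$; and (iii) that microlocal $M$-ellipticity of $a_M$ at $(x_0,\xi^0)$ is inherited by $a^\#$ up to negligible contributions. The numerical hypothesis $r>\frac{n}{\mu_\ast q}+[n/\mu_\ast]+1$ is exactly what permits an admissible $\kappa$ to be sandwiched between $[n/\mu_\ast]+1$ and $r'$, making the whole scheme consistent.
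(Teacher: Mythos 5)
Your proposal follows essentially the same route as the paper's proof of Theorem~\ref{cor_APPLICATION} (via Theorem~\ref{APPLICATION}): localize, apply the Taylor splitting $a_\phi = a^\#_\phi + a^\natural_\phi$ from Sect.~\ref{propation_sing_FL_sct_0}, bound $a^\natural_\phi(x,D)(\psi u)$ using Theorem~\ref{FL_thm}, and conclude by microlocal $M$-elliptic regularity for $a^\#_\phi\in S^1_{M,\delta,\kappa}$ from Theorem~\ref{FL_cor}. Two small slips worth noting: Proposition~\ref{pro:2} actually fixes $\kappa=r-\tfrac{n}{\mu_\ast q}$ (it is not a free parameter $<r'$), which still satisfies $\kappa>[n/\mu_\ast]+1$ as required; and $a^\natural_\phi$ lands in $\mathcal FL^p_{r,M}S^{1-\delta\left(r-\frac{n}{\mu_\ast q}\right)}_{M,\delta}$ (with the subscript $\delta$, not $\delta=0$), and it is precisely the $\delta$-dependent admissible range in Theorem~\ref{FL_thm} that reproduces \eqref{ranges}.
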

The proof of Theorem \ref{cor_APPLICATION} is postponed to Sect. \ref{thm1.4_sct}. We end up by illustrating a simple application of Theorem \ref{cor_APPLICATION}.

\medskip
{\em Example.} Consider the linear partial differential operator in $\mathbb R^2$
\begin{equation}\label{pdo_ex}
P(x,D)=c(x)\partial_{x_1}+i\partial_{x_1}-\partial^2_{x_2}\,,
\end{equation}
where
\[
c(x):=\frac{x_1^{k_1}}{k_1!}\frac{x_2^{k2}}{k_2!}e^{-a_1x_1}e^{-a_2x_2}H(x_1)H(x_2)\,,\quad x=(x_1,x_2)\in\mathbb R^2\,,
\]
being 
$
H(t)=\chi_{(0, \infty)}(t)
$
the Heaviside function,
$k_1, k_2$ some positive integers and $a_1, a_2$ positive real numbers. 
\newline
It tends out that $c\in L^1(\mathbb R^2)$ and a direct computation gives:
\[
\widehat{c}(\xi)=\frac1{(a_1+i\xi_1)^{k_1+1}(a_2+i\xi_2)^{k_2+1}}\,,\quad\xi=(\xi_1,\xi_2)\in\mathbb R^2\,.
\]
Let us consider the vector $M=(1,2)$ and the related $M-$weight function $\langle\xi\rangle_M:=(1+\xi_1^2+\xi_2^4)^{1/2}$.
\newline
For any $p\in[1,+\infty]$ and $r>2/q+3$, $\frac1{p}+\frac1{q}=1$, one  easily proves, for a suitable constant $C=C(a_1,a_2,k_1,k_2, r)$
\[
\langle\xi\rangle_M^r\vert\widehat{c}(\xi)\vert\le\frac{C}{(1+\vert\xi_1\vert)^{k_1+1-r}(1+\vert\xi_2\vert)^{k_2+1-2r}}\,,
\]
thus $c\in\mathcal FL^p_{r,M}(\mathbb R^2)$, provided that $k_1$, $k_2$ satisfy
\begin{equation}\label{cond_k1k2}
k_1>r-1/q\quad\mbox{and}\quad k_2>2r-1/q\,.
\end{equation}
Then, under condition \eqref{cond_k1k2}, the symbol $P(x,\xi)=ic(x)\xi_1-\xi_1+\xi_2^2$ of the operator $P(x,D)$ defined in \eqref{pdo_ex} belongs to $\mathcal FL^p_{r,M}S^1_M$, cf. Definition \ref{def_bdd_smooth_symb}.
\newline
Let us set $\Omega:=\mathbb R^2\setminus\mathbb R^2_+$. Since $\vert P(x,\xi)\vert^2=c^2(x)\xi_1^2+(-\xi_1+\xi_2^2)^2$, the characteristic set of $P$ is just ${\rm Char}(P)=\Omega\times\{(\xi_1,\xi_2)\in\mathbb R^2\setminus\{(0,0)\}\,:\,\,\xi_1=\xi_2^2\}$ (cf. Definition \ref{Melliptic}) or, equivalently, $P$ is microlocally $M-$elliptic at a point $(x_0,\xi^0)=(x_{0,1},x_{0,2},\xi^0_1,\xi^0_2)\in T^\circ\mathbb R^2$ if and only if 
\[
x_{0,1}>0\,,\,\,x_{0,2}>0\quad\mbox{or}\quad\xi^0_1\neq (\xi^0_2)^2\,. 
\]    
Applying Theorem \ref{cor_APPLICATION}, for any such a point $(x_0,\xi^0)$ we have
\begin{equation*}
\begin{array}{l}
u\in\mathcal FL^p_{s-\delta\left(r-\frac{2}{q}\right), M, {\rm loc}}(x_0)\\
P(x,D)u\in\mathcal FL^p_{s-1, M, {\rm mcl}}(x_0,\xi^0)
\end{array}\Rightarrow\quad u\in\mathcal FL^p_{s, M, {\rm mcl}}(x_0,\xi^0)\,,
\end{equation*}
as long as $0<\delta<1/2$ and $1+(\delta-1)\left(r-\frac{2}{q}\right)<s\le r+1$.
%By means of the argument above stated, we obtain the following general result for non regular pseudodifferential operators.
%\begin{corollary}\label{APPLICATION2}
%for $a(x,\xi)\in B^{r,M}_{\infty,\infty}S^m_M$, $r>0$, $u$ belonging a priori to $H^{s-\delta r,p}_M$ (resp. $B^{s-\delta r,M}_{\infty,\infty}$) for $1<p<\infty$, $(\delta-1)r+m<s<r+m$ (resp. $(\delta-1)r+m<s\le r+m$), $\delta\in]0,1/m^*[$, we have
%$$
%\begin{array}{ll}
%WF_{H^{s,p}_M}(u)\subset WF_{H^{s-m,p}_M}(a(x,D)u)\cup{\rm Char}(a)\,\\
%({\rm resp.}\,\,WF_{B^{s,M}_{\infty,\infty}}(u)\subset WF_{B^{s-m,M}_{\infty,\infty}}(a(x,D)u)\cup{\rm Char}(a))\,.
%\end{array}
%$$
%\end{corollary}
\subsection{Quasi-linear PDE}\label{qlPDE}
In the last two sections, we consider few applications to the study of $M-$homogeneous Fourier Lebesgue singularities of solutions to certain classes of nonlinear PDEs.\\
Let us start with the {\em $M-$ quasi-linear} equations. Namely consider
\begin{equation}\label{quasi-lin_PDE}
\sum\limits_{\langle\alpha,1/M\rangle\le 1}a_\alpha(x,D^\beta u)_{\langle\beta,1/M\rangle\le 1-\epsilon}D^\alpha u=f(x)\,,
\end{equation}
where $a_\alpha=a_\alpha(x,D^\beta u)$ are given suitably regular functions of $x$ and partial derivatives of the unknown $u$ with $M-$order $\langle\beta,1/M\rangle$ less than or equal to $1-\epsilon$, for a given $0<\epsilon\le 1$, and where the source $f=f(x)$ is sufficiently smooth.
\newline
We define the $M-${\em principal part} of the differential operator in the left-hand side of \eqref{quasi-lin_PDE} by
\begin{equation}\label{quasi-lin_principal}
A_M(x,\xi,\zeta):=\sum\limits_{\langle\alpha,1/M\rangle=1}a_\alpha(x,\zeta)\xi^\alpha,
\end{equation}
where $x,\xi\in\mathbb R^n$, $\zeta=(\zeta_\beta)_{\langle\beta,1/M\rangle\le 1-\epsilon}\in\mathbb C^N$, $N=N(\epsilon):=\#\{\beta\in\mathbb Z^n_+\,:\,\,\langle\beta,1/M\rangle\le 1-\epsilon\}$.
It is moreover assumed that $a_\alpha$ is not identically zero for at least one multi-index $\alpha$ with $\langle\alpha,1/M\rangle=1$.
\newline
Let us take a point $(x_0,\xi^0)\in T^\circ\mathbb R^n$; we make on the equation \eqref{quasi-lin_PDE} the following assumptions:
\begin{itemize}
\item[(a)] for all $\alpha\in\mathbb Z^n_+$ satisfying $\langle\alpha,1/M\rangle\le 1$, the coefficients $a_\alpha(x,\zeta)$ are {\em locally smooth with respect to $x$ and entire analytic with respect to $\zeta$ uniformly in $x$}; that is, for some open neighborhood $U_0$ of $x_0$ 
\begin{equation}\label{analytic}
a_\alpha(x,\zeta)=\sum_{\gamma\in\mathbb{Z}^N_+}a_{\alpha,\gamma}(x)\zeta^{\gamma}, \quad \quad a_{\alpha,\gamma}\in C^{\infty}(U_0), \ \zeta\in \mathbb{C}^N\,,
\end{equation}
where for any $\beta\in\mathbb{Z}^n_+$, $\gamma\in\mathbb{Z}^N_+$ and suitable $c_{\alpha,\beta}>0$, $\sup\limits_{x\in U_0}\vert \partial_x^{\beta}a_{\alpha,\gamma}(x)\vert\leq c_{\alpha,\beta}\lambda_{\gamma}$ and the expansion $F_1(\zeta):=\sum\limits_{\gamma\in\mathbb{Z}_+^N}\lambda_{\gamma}\zeta^{\gamma}$ defines an entire analytic function;
\item[(b)] \eqref{quasi-lin_PDE} is {\em microlocally $M-$elliptic} at $(x_0,\xi^0)$, that is the $M-$prin\-ci\-pal part \eqref{quasi-lin_principal} satisfies, for some $\Gamma_M$ $M-$conic neighborhood of $\xi^0$,
\begin{equation}\label{qom}
A_M(x,\xi,\zeta)\neq 0\,,\quad\mbox{for}\,\,\,(x,\xi)\in U_0\times\Gamma_M,\,\,\,\zeta\in\mathbb C^N\,.
\end{equation}
\end{itemize}
Under the previous assumptions, we may prove the following
\begin{theorem}\label{quasi-lin_thm}
Let $p\in[1,+\infty]$, $r>\frac{n}{\mu_\ast q}+\left[\frac{n}{\mu_\ast}\right]+1$, $\frac{1}{p}+\frac{1}{q}=1$, $0<\epsilon\le 1$ and $(x_0,\xi^0)\in T^\circ\mathbb R^n$ be given, consider the quasi-linear $M-$homogeneous PDE \eqref{quasi-lin_PDE}, satisfying assumptions (a) and (b). For any $s$ such that
\begin{equation}\label{range_s}
r+1+\delta\left(r-\frac{n}{\mu_\ast q}\right)-\epsilon\le s\le r+1\,,
\end{equation}
with
\begin{equation}\label{range_delta}
0<\delta\le\frac{\epsilon}{r-\frac{n}{\mu_\ast q}}\quad\mbox{and}\quad 0<\delta<\frac{\mu_\ast}{\mu^\ast}\,,
\end{equation}
consider $u\in\mathcal FL^p_{s-\delta\left(r-\frac{n}{\mu_\ast q}\right),M,{\rm loc}}(x_0)$ a solution to \eqref{quasi-lin_PDE} with source term 
\begin{equation*}\label{source}
f\in\mathcal FL^p_{s-1,M,{\rm mcl}}(x_0,\xi^0);
\end{equation*} 
then $u\in\mathcal FL^p_{s,M,{\rm mcl}}(x_0,\xi^0)$.
\end{theorem}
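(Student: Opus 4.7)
The plan is to linearize \eqref{quasi-lin_PDE} along the unknown solution $u$, so as to reduce the statement to Theorem \ref{cor_APPLICATION}. Setting $v_\beta(x) := D^\beta u(x)$ for every $\beta$ with $\langle\beta,1/M\rangle\le 1-\epsilon$ and
$$
c_\alpha(x) := a_\alpha(x,(v_\beta(x))_\beta), \qquad \langle\alpha,1/M\rangle\le 1,
$$
the equation \eqref{quasi-lin_PDE} reads $\sum_{\langle\alpha,1/M\rangle\le 1}c_\alpha(x)D^\alpha u = f(x)$, which is of the form \eqref{APPL:1}. To invoke Theorem \ref{cor_APPLICATION} I must establish: (i) $c_\alpha\in\mathcal FL^p_{r,M,{\rm loc}}(x_0)$; (ii) microlocal $M$-ellipticity at $(x_0,\xi^0)$ of the linearized $M$-principal symbol; (iii) that the prescribed $s$ falls in the range \eqref{ranges}.

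For (i), I first observe that differentiation $D^\beta$ costs $\langle\beta,1/M\rangle$ units in the Fourier Lebesgue scale, so $v_\beta\in\mathcal FL^p_{s-\delta(r-n/(\mu_\ast q))-\langle\beta,1/M\rangle,M,{\rm loc}}(x_0)$. The lower bound in \eqref{range_s}, combined with $\langle\beta,1/M\rangle\le 1-\epsilon$, yields $s-\delta(r-n/(\mu_\ast q))-\langle\beta,1/M\rangle\ge r$, so each $v_\beta$ lies locally in $\mathcal FL^p_{r,M}$ near $x_0$. Next, the assumption $r>n/(\mu_\ast q)$ gives, via H\"older and the polynomial growth \eqref{PG}, the continuous embedding $\mathcal FL^p_{r,M}\hookrightarrow\mathcal FL^1$. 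Combined with a Peetre-type inequality for $\langle\xi\rangle_M^r$ coming from $M$-subadditivity \eqref{sub-add}, this makes $\mathcal FL^p_{r,M}$ (after multiplication by a fixed cutoff $\phi\in C^\infty_0(U_0)$ with $\phi(x_0)\ne 0$) into a Banach algebra under pointwise product, so monomials obey
$$
\|\phi v^\gamma\|_{\mathcal FL^p_{r,M}}\le C^{|\gamma|}\prod_\beta \|\phi v_\beta\|_{\mathcal FL^p_{r,M}}^{\gamma_\beta}.
$$
Pairing this with the analyticity hypothesis (a) and the uniform growth rate $\lambda_\gamma$, the power series $\phi c_\alpha=\sum_\gamma (\phi a_{\alpha,\gamma})v^\gamma$ is dominated in $\mathcal FL^p_{r,M}$ by a series built from the entire function $F_1$ evaluated at a radius $R$ depending only on the $\mathcal FL^p_{r,M}$-norms of the $\phi v_\beta$; this establishes (i).

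For (ii), the $M$-principal symbol of the linearized operator equals $A_M(x,\xi,(v_\beta(x))_\beta)$. Since $r>n/(\mu_\ast q)$ also yields the embedding $\mathcal FL^p_{r,M,{\rm loc}}(x_0)\hookrightarrow C(U_0)$, the $v_\beta$ are continuous near $x_0$, and hypothesis (b), applied at $\zeta=(v_\beta(x_0))_\beta$, gives non-vanishing on a (possibly shrunken) neighborhood $U\times\Gamma_M$ of $(x_0,\xi^0)$. For (iii), the lower endpoint in \eqref{range_s}, the constraint $\delta(r-n/(\mu_\ast q))\le\epsilon$ from \eqref{range_delta}, and $\epsilon\le 1$ combine to give $1+(\delta-1)(r-n/(\mu_\ast q))<s\le r+1$, exactly the admissible range in \eqref{ranges}. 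An application of Theorem \ref{cor_APPLICATION} then yields $u\in\mathcal FL^p_{s,M,{\rm mcl}}(x_0,\xi^0)$. The main obstacle I anticipate is in step (i): the localized algebra estimate and the bookkeeping needed to control all cutoff commutators appearing in the powers $\phi v^\gamma$ by a single $\gamma$-independent constant, so that the convergence of $F_1$ actually lifts to convergence of the series for $\phi c_\alpha$ in $\mathcal FL^p_{r,M}$. Once (i) is secured, the remaining steps are structural.
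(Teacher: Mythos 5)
Your proposal is correct and follows essentially the same route as the paper: linearize along the solution $u$, check that the coefficients $a_\alpha(\cdot,D^\beta u)$ land in $\mathcal FL^p_{r,M,{\rm loc}}(x_0)$, verify that the range \eqref{range_s} sits inside the range \eqref{ranges}, and then invoke Theorem \ref{cor_APPLICATION}. The only presentational difference is that where you develop the Banach-algebra argument for $\mathcal FL^p_{r,M}$ (with $r>\frac{n}{\mu_\ast q}$) from scratch to treat the analytic composition, the paper simply cites \cite[Corollary 2]{GM-18}; also note that the ``bookkeeping obstacle'' you anticipate in step (i) does not actually arise, since the algebra constant is $\gamma$-independent by definition and so is harmlessly absorbed by the entire function $F_1$.
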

\begin{proof}
From \eqref{range_s} and the other assumptions on $r$, in view of Proposition \ref{Fm_prop} (see also \cite[Proposition 8]{GM-18}) and \cite[Corollary 2]{GM-18}, from $u\in\mathcal FL^p_{s-\delta\left(r-\frac{n}{\mu_\ast q}\right),M,{\rm loc}}(x_0)$ it follows that
\begin{equation*}\label{inclusion}
D^\beta u\in \mathcal FL^p_{s-\delta\left(r-\frac{n}{\mu_\ast q}\right)-1+\epsilon, M,{\rm loc}}(x_0)\hookrightarrow\mathcal FL^p_{r, M,{\rm loc}}(x_0),
\end{equation*}
as long as $\langle\beta, 1/M\rangle\le 1-\epsilon$, hence $a_\alpha(\cdot, D^{\beta}u)_{\langle\beta, 1/M\rangle\le 1-\epsilon}\in\mathcal FL^p_{r, M,{\rm loc}}(x_0)$ for $\langle\alpha,1/M\rangle\le 1$.
\newline
Notice that conditions \eqref{range_delta} ensure that $\delta$ belongs to the interval $\left]0,\frac{\mu_\ast}{\mu^\ast}\right[$ as required by Theorem \ref{cor_APPLICATION}, see Remark \ref{best_values} below. Notice also that for $r$ satisfying the condition required by Theorem \ref{quasi-lin_thm}, $(\delta-1)\left(r-\frac{n}{\mu_\ast q}\right)+1<r+1+\delta\left(r-\frac{n}{\mu_\ast q}\right)-\epsilon$ hence the range of $s$ in \eqref{range_s} is included in the range of $s$ in the statement of Theorem \ref{cor_APPLICATION}. Therefore, we are in the position to apply Theorem \ref{cor_APPLICATION} to the symbol
\begin{equation}\label{Au}
A_u(x,\xi):=\sum\limits_{\langle\alpha,1/M\rangle\le 1}a_{\alpha}(x,D^\beta u)_{\langle\beta,1/M\rangle\le 1-\epsilon}\xi^\alpha\,,
\end{equation} 
which is of the type involved in \eqref{APPL:1} and, in particular, is microlocally $M-$elliptic at $(x_0,\xi^0)$ in the sense of \eqref{principal}. This shows the result.
\end{proof}
\begin{remark}\label{best_values}
{\rm According to the proof, we underline that in the statement of Theorem \ref{quasi-lin_thm} the assumption (b) could be relaxed to the weaker assumption that the symbol \eqref{Au} of the linear operator, which is obtained by making explicit the expression of the operator in the left-hand side of \eqref{quasi-lin_PDE} at the given solution $u=u(x)$, is microlocally $M-$elliptic at $(x_0,\xi^0)$ in the sense of \eqref{principal}.

\smallskip 
Concerning the assumptions \eqref{range_delta} on $\delta$, we note that $\frac{\mu_\ast}{\mu^\ast}\le\frac{\epsilon}{r-\frac{n}{\mu_\ast q}}$ if and only if $r\le\frac{n}{\mu_\ast q}+\frac{\mu^\ast}{\mu_\ast}\epsilon$, otherwise $\frac{\epsilon}{r-\frac{n}{\mu_\ast q}}$ is strictly smaller than $\frac{\mu_\ast}{\mu^\ast}$. Since $0<\epsilon\le 1$ and $\frac{\mu^\ast}{\mu_\ast}\ge 1$, in principle $r>\frac{n}{\mu_\ast q}+\left[\frac{n}{\mu_\ast}\right]+1$ could be either smaller or greater than $\frac{n}{\mu_\ast q}+\frac{\mu^\ast}{\mu_\ast}\epsilon$, therefore the two assumptions on $\delta$ in \eqref{range_delta} cannot be unified.
\newline
Assuming in particular $r>\frac{n}{\mu_\ast q}+\frac{\mu^\ast}{\mu_\ast}\epsilon$ and taking in the statement of Theorem \ref{quasi-lin_thm} $s=r+1$ and the best (that is biggest) amount of microlocal regularity of $u$, quantified by $\delta=\frac{\epsilon}{r-\frac{n}{\mu_\ast q}}$, we obtain
\begin{equation}\label{implicazione_r}
f\in\mathcal FL^p_{r,M,{\rm mcl}}(x_0,\xi^0)\quad\Rightarrow\quad u\in\mathcal FL^p_{r+1,M,{\rm mcl}}(x_0,\xi^0)
\end{equation}
for any solution $u$ to the equation \eqref{quasi-lin_PDE} belonging a priori to $\mathcal FL^p_{r+1-\epsilon,M,{\rm loc}}(x_0)$.
\newline
Assume now $r\le\frac{n}{\mu_\ast q}+\frac{\mu^\ast}{\mu_\ast}\epsilon$ and set again $s=r+1$ in the statement of Theorem \ref{quasi-lin_thm}; since $\frac{\epsilon}{r-\frac{n}{\mu_\ast q}}\ge\frac{\mu_\ast}{\mu^\ast}$, in this case the value $\frac{\epsilon}{r-\frac{n}{\mu_\ast q}}$ cannot be attained by $\delta\in\left]0,\frac{\mu_\ast}{\mu^\ast}\right[$, and we get that \eqref{implicazione_r} remains true for any solution belonging a priori to $\mathcal FL^p_{r+1-\delta\left(r-\frac{n}{\mu_\ast q}\right),M,{\rm loc}}(x_0)$ for any positive $\delta<\frac{\mu_\ast}{\mu^\ast}$.}
\end{remark} 
\begin{remark}\label{feneral_order}
{\rm As in the case of linear PDEs (see e.g. Theorem \ref{APPLICATION}), also in the framework of quasi-linear PDEs the result of Theorem \ref{quasi-lin_thm} can be stated for a $M-$homogeneous quasi-linear equation of {\em arbitrary} positive order $m$, namely
\begin{equation}\label{quasi-lin_PDE_m}
\sum\limits_{\langle\alpha,1/M\rangle\le m}a_\alpha(x,D^\beta u)_{\langle\beta,1/M\rangle\le m-\epsilon}D^\alpha u=f(x)\,,
\end{equation}
with $m>0$ and $0<\epsilon\le m$. In this case, the range \eqref{range_s} of $s$ will be replaced by
\begin{equation}\label{range_s_m}
r+m+\delta\left(r-\frac{n}{\mu_\ast q}\right)-\epsilon\le s\le r+m
\end{equation}
with $\delta$ satisfying \eqref{range_delta}, and the result becomes
\[
f\in\mathcal FL^p_{s-m,M,{\rm mcl}}(x_0,\xi^0)\quad\Rightarrow\quad u\in\mathcal FL^p_{s,M,{\rm mcl}}(x_0,\xi^0)
\]
for any solution $u\in\mathcal FL^p_{s-\delta\left(r-\frac{n}{\mu_\ast q}\right),M,{\rm loc}}(x_0)$ of \eqref{quasi-lin_PDE_m}.} 
\end{remark}
% nonlinear
\subsection{Nonlinear PDE}\label{nlPDE}
Let us consider now the fully nonlinear equation
\begin{equation}\label{nleq}
F(x,D^\alpha u)_{\alpha\cdot 1/M\le m}=f(x)\,,
\end{equation}
where $F(x,\zeta)$ is locally smooth with respect to $x\in \mathbb R^n$ and entire analytic in $\zeta\in \mathbb C^N$, uniformly in $x$. Namely, for $N=\#\{ \alpha \in \mathbb Z^n_+\, :\, \langle \alpha, \frac{1}{M}\rangle\leq 1\}$ and some open neighborhood $U_0$ of $x_0$, 
\begin{equation}\label{analytic1}
F(x,\zeta)=\sum_{\gamma\in\mathbb{Z}^M_+}c_{\gamma}(x)\zeta^{\gamma}, \quad \quad c_{\gamma}\in C^{\infty}(U_0), \ \zeta\in \mathbb{C}^N\,,
\end{equation}
where for any $\beta\in\mathbb{Z}^n_+$, $\gamma\in\mathbb{Z}^N_+$ and some positive $a_\beta$, $\lambda_\gamma$, $\sum\limits_{\gamma\in\mathbb{Z}_+^N}\lambda_{\gamma}\zeta^{\gamma}$ is entire analytic in $\mathbb C^N$ and $\sup\limits_{x\in U_0}\vert \partial_x^{\beta}c_{\gamma}(x)\vert\leq a_{\beta}\lambda_{\gamma}$.

Let the equation \eqref{nleq} be microlocally $M-$elliptic at $(x_0,\xi_0)\in T^\circ\mathbb R^n$, meaning that the {\it linearized} $M-$principal symbol $A_M(x,\xi,\zeta):=\sum\limits_{\langle \alpha, 1/M\rangle=1}\frac{\partial F}{\partial\zeta_\alpha}(x,\zeta)\xi^\alpha$ satisfies
\begin{equation}\label{nlell}
\sum\limits_{\langle \alpha, 1/M\rangle =1}\frac{\partial F}{\partial\zeta_\alpha}(x,\zeta)\xi^\alpha\neq 0\,\,\,\mbox{for}\,\,(x,\xi)\in U_0\times\Gamma_M,
\end{equation}
for $\Gamma_M$ a suitable $M-$conic neighborhood of $\xi_0$.
\begin{theorem}\label{nlmicroreg}
Assume that equation \eqref{nleq} is microlocally $M-$elliptic at $(x_0,\xi^0)\in T^\circ\mathbb R^n$. For $1\le p\le +\infty$, $r>\frac{n}{\mu_\ast q}+ \left[ \frac{n}{\mu_\ast}\right]+1$, $0<\delta<\frac{\mu_\ast}{\mu^*}$, let $u\in \mathcal F L^p_{M, r+1, \textup{loc}}(x_0)$, satisfying in addition
\begin{equation}\label{der}
\partial_{x_j}u\in  \mathcal F L^p_{M, r+1-\delta (r-\frac{n}{\mu_\ast q}), \textup{loc}}(x_0) \,,\quad j=1,\dots,n\,,
\end{equation}
be a solution to \eqref{nleq}. If moreover the forcing term satisfies
\begin{equation}\label{der_f}
\partial_{x_j}f\in \mathcal F L^p_{r, M, \textup{mcl}} (x_0, \xi^0),\quad j=1,\dots,n\,,
\end{equation}
we obtain 
\begin{equation}\label{der_mcl}
\partial_{x_j}u\in  \mathcal F L^p_{r+1, M, \textup{mcl}} (x_0, \xi^0)\,,\quad j=1,\dots, n\,.
\end{equation}
\end{theorem}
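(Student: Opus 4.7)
The plan is to reduce the fully nonlinear equation \eqref{nleq} (of $M$-order $m=1$) to a linear PDE for each first-order partial derivative $v_j:=\partial_{x_j}u$, and then to invoke Theorem \ref{cor_APPLICATION}. Differentiating \eqref{nleq} with respect to $x_j$, one obtains
\[
\sum_{\langle\alpha,1/M\rangle\le 1} c^{(j)}_\alpha(x)\, D^\alpha v_j \,=\, g_j(x),
\]
where
\[
c^{(j)}_\alpha(x):=\tfrac{\partial F}{\partial\zeta_\alpha}\bigl(x,(D^\beta u(x))_\beta\bigr),\qquad g_j(x):=\partial_{x_j}f(x)-\tfrac{\partial F}{\partial x_j}\bigl(x,(D^\beta u(x))_\beta\bigr).
\]
This linear PDE has the form \eqref{APPL:1}, and by the linearized $M$-ellipticity assumption \eqref{nlell} its $M$-principal symbol is microlocally $M$-elliptic at $(x_0,\xi^0)$.

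To apply Theorem \ref{cor_APPLICATION} to the equation for $v_j$ with $s=r+1$, I need to verify: (i) the coefficients $c^{(j)}_\alpha$ belong to $\mathcal FL^p_{r,M,\mathrm{loc}}(x_0)$; (ii) the source $g_j$ belongs to $\mathcal FL^p_{r,M,\mathrm{mcl}}(x_0,\xi^0)$; and (iii) the unknown $v_j$ belongs a priori to $\mathcal FL^p_{r+1-\delta(r-n/(\mu_\ast q)),M,\mathrm{loc}}(x_0)$. Condition (iii) is exactly hypothesis \eqref{der}, while the admissibility conditions on $r$, $\delta$ and $s=r+1$ required by Theorem \ref{cor_APPLICATION} are all inherited from the present hypotheses.

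For (i) and (ii), Proposition \ref{Fm_prop} applied to $u\in\mathcal FL^p_{r+1,M,\mathrm{loc}}(x_0)$ first gives $D^\beta u\in\mathcal FL^p_{r,M,\mathrm{loc}}(x_0)$ for every $\beta$ with $\langle\beta,1/M\rangle\le 1$. Because $r>n/(\mu_\ast q)$, the space $\mathcal FL^p_{r,M,\mathrm{loc}}(x_0)$ is a Banach algebra under pointwise multiplication (cf.\ \cite[Corollary 2]{GM-18}); hence every monomial $(D^\beta u)^\gamma$ is controlled in this space by an appropriate product of powers of the norms $\|D^\beta u\|_{\mathcal FL^p_{r,M,\mathrm{loc}}(x_0)}$. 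Plugging these into the analytic expansions \eqref{analytic1} for $\partial F/\partial\zeta_\alpha$ and $\partial F/\partial x_j$, and using the uniform estimate $\sup_{x\in U_0}|\partial^\beta_x c_\gamma(x)|\le a_\beta\lambda_\gamma$ together with the fact that $F_1(\zeta):=\sum_\gamma\lambda_\gamma\zeta^\gamma$ is entire, the resulting series converge absolutely in $\mathcal FL^p_{r,M,\mathrm{loc}}(x_0)$. This establishes (i) and also $(\partial F/\partial x_j)(\cdot,D^\beta u)\in\mathcal FL^p_{r,M,\mathrm{loc}}(x_0)$; combined with the microlocal hypothesis \eqref{der_f} on $\partial_{x_j}f$ this yields (ii).

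A direct application of Theorem \ref{cor_APPLICATION} to the linear PDE satisfied by $v_j$ then delivers $v_j\in\mathcal FL^p_{r+1,M,\mathrm{mcl}}(x_0,\xi^0)$, which is precisely \eqref{der_mcl}. The main technical obstacle lies in steps (i) and (ii): one must show that an entire analytic function of several complex variables can be evaluated at non-smooth arguments lying in a weighted Fourier Lebesgue space while preserving Fourier Lebesgue regularity. This is where the algebra property of $\mathcal FL^p_{r,M,\mathrm{loc}}(x_0)$, guaranteed by $r>n/(\mu_\ast q)$, combined with the uniform pointwise control on the Taylor coefficients of $F$ provided by \eqref{analytic1}, plays the decisive role; it is the $M$-homogeneous Fourier Lebesgue counterpart of classical paradifferential composition lemmas.
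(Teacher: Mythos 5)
Your proposal is correct and follows essentially the same route as the paper: differentiate \eqref{nleq} in $x_j$ to obtain the linearized equation \eqref{lineq} for $v_j=\partial_{x_j}u$, show that the coefficients $\frac{\partial F}{\partial\zeta_\alpha}(\cdot,D^\beta u)$ and the source $\partial_{x_j}f-\frac{\partial F}{\partial x_j}(\cdot,D^\beta u)$ have the required $\mathcal FL^p_{r,M}$ (micro)local regularity via the composition/algebra result of \cite[Corollary 2]{GM-18}, observe that \eqref{nlell} gives microlocal $M$-ellipticity of the linearized symbol, and then apply Theorem \ref{cor_APPLICATION} with $s=r+1$ and a priori datum \eqref{der}. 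The only cosmetic difference is that your write-up spells out the algebra/analyticity step more explicitly than the paper's terse citation does.
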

\begin{proof}
For each $j=1,\dots,n$, we differentiate \eqref{nleq} with respect to $x_j$  finding that $\partial_{x_j}u$ must solve the linearized equation
\begin{equation}\label{lineq}
\sum\limits_{\langle\alpha, 1/M\rangle \le 1}\frac{\partial F}{\partial\zeta_\alpha}(x,D^\beta u)_{\langle\beta,1/M\rangle\le 1}D^\alpha\partial_{x_j}u=\partial_{x_j}f-\frac{\partial F}{\partial x_j}(x, D^\beta u)_{\langle\beta,1/M\rangle\le 1}\,.
\end{equation}
From Theorems \ref{FL_sym_cor} and \cite[Corollary 2]{GM-18}, $u\in \mathcal F L^p_{M, r+1, \textup{loc}}(x_0)$ yields that
\begin{equation*}\label{comp} 
\frac{\partial F}{\partial\zeta_\alpha}(\cdot ,D^\beta u)_{\beta\cdot 1/M\le m}\in \mathcal F L^p_{M, r, \textup{loc}}(x_0).
\end{equation*}
Because of hypotheses \eqref{der}, \eqref{der_f}, for each $j=1,\dots, n$, Theorem \ref{cor_APPLICATION} applies to $\partial_{x_j}u$, as a solution of the equation \eqref{lineq} (which is microlocally $M-$elliptic at $(x_0,\xi_0)$ in view of \eqref{nlell}), taking $s=r+1$. This proves the result.
\newline
\end{proof}
\begin{lemma}
For every $M\in\mathbb R^n_+$, $s\in\mathbb R$, $1\le p\le +\infty$, assume that $u, \partial_{x_j}u\in\mathcal F L^p_{s,M}(\mathbb R^n)$ for all $j=1,\dots,n$. Then $u\in \mathcal F L^p_{ s+\frac{\mu_*}{\mu^*},M}(\mathbb R^n)$. The same is still true if the Fourier Lebesgue spaces $\mathcal F L^p_{s,M}(\mathbb R^n)$,  $\mathcal F L^p_{ s+\frac{\mu_*}{\mu^*},M}(\mathbb R^n)$ 
are replaced by  $\mathcal F L^p_{s,M, \textup{mcl}}(x_0, \xi^0)$,  $\mathcal F L^p_{ s+\frac{\mu_*}{\mu^*},M, \textup{mcl}}(x_0,\xi^0)$ at a given point $(x_0,\xi^0)\in T^\circ\mathbb R^n$.
\end{lemma}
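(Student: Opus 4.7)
The plan is to pass to the Fourier side and reduce to the polynomial-growth estimate \eqref{PG} of Lemma \ref{lemma_M-h-w}. Since $\widehat{\partial_{x_j}u}(\xi)=i\xi_j\hat u(\xi)$, the hypothesis $u,\partial_{x_j}u\in\mathcal FL^p_{s,M}$ is equivalent to
\[
\langle\cdot\rangle_M^s\hat u\in L^p(\mathbb R^n)\qquad\text{and}\qquad \xi_j\,\langle\cdot\rangle_M^s\hat u\in L^p(\mathbb R^n),\quad j=1,\dots,n.
\]
Summing these memberships and using the elementary equivalence $1+|\xi|\asymp 1+\sum_{j=1}^n|\xi_j|\asymp\langle\xi\rangle$, one obtains $\langle\xi\rangle\,\langle\xi\rangle_M^s\,\hat u\in L^p(\mathbb R^n)$.

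The global statement then reduces to the pointwise frequency inequality
\[
\langle\xi\rangle_M^{\mu_\ast/\mu^\ast}\le C\,\langle\xi\rangle,\qquad\xi\in\mathbb R^n,
\]
which follows by raising $\langle\xi\rangle_M\le C\langle\xi\rangle^{\mu^\ast}$ (given by Lemma \ref{lemma_M-h-w}) to the power $\mu_\ast/\mu^\ast\le 1$, and using that $\langle\xi\rangle^{\mu_\ast}\le\langle\xi\rangle$ in the regime $\mu_\ast\le 1$ of interest here. Combining this pointwise bound with the $L^p$-membership from the previous step yields $\langle\cdot\rangle_M^{s+\mu_\ast/\mu^\ast}\hat u\in L^p(\mathbb R^n)$, i.e.\ $u\in\mathcal FL^p_{s+\mu_\ast/\mu^\ast,M}(\mathbb R^n)$.

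For the microlocal version at $(x_0,\xi^0)$, the idea is to repeat the above for a localized-and-microlocalized version of $u$. I would pick $\phi\in C_c^\infty(\mathbb R^n)$ with $\phi(x_0)\neq 0$ and $\psi\in S^0_M$ equal to $1$ on an $M$-conic neighborhood of $\xi^0$ common to the microlocal regularity of $u$ and of every $\partial_{x_j}u$ (by intersecting the neighborhoods supplied by Definition \ref{pfl_def_WF}). Commuting $\partial_{x_j}$ past the Fourier multiplier $\psi(D)$ and applying the Leibniz rule gives
\[
\partial_{x_j}\bigl[\psi(D)(\phi u)\bigr]=\psi(D)(\phi\,\partial_{x_j}u)+\psi(D)\bigl((\partial_{x_j}\phi)u\bigr),
\]
whose first summand lies in $\mathcal FL^p_{s,M}$ by hypothesis. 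The remaining boundary term is handled by nesting $\phi$ inside a slightly larger bump $\tilde\phi\equiv 1$ on $\mathrm{supp}\,\phi$, writing $(\partial_{x_j}\phi)u=(\partial_{x_j}\phi)(\tilde\phi u)$, and combining the microlocal $\mathcal FL^p_{s,M}$-regularity of $u$ with the boundedness of $\psi(D)$ on $\mathcal FL^p_{s,M}$ (Proposition \ref{Fm_prop}). Applying the global part of the lemma to $v:=\psi(D)(\phi u)$ then gives $v\in\mathcal FL^p_{s+\mu_\ast/\mu^\ast,M}$, hence $u\in\mathcal FL^p_{s+\mu_\ast/\mu^\ast,M,\textup{mcl}}(x_0,\xi^0)$.

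I expect the main technical obstacle to be the careful bookkeeping of the boundary term $\psi(D)((\partial_{x_j}\phi)u)$ in the microlocal step: aligning the cutoffs so that the smoothness of $\partial_{x_j}\phi$ and the conic localization of $\psi$ jointly absorb the merely microlocal regularity of $u$ into full $\mathcal FL^p_{s,M}$ membership is the standard nested-cutoff argument, but it is the only nontrivial ingredient. The pointwise frequency estimate is, by contrast, an immediate consequence of \eqref{PG}.
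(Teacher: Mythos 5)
Your global argument is correct but takes a genuinely different route from the paper. You pass through the Euclidean weight $\langle\xi\rangle$ via the polynomial-growth comparison \eqref{PG}; the paper stays entirely within the $M$-weighted scale, decomposing the Fourier multiplier $\langle D\rangle_M^{\mu_\ast/\mu^\ast}$ via the algebraic identity $\langle\xi\rangle_M^2=1+\sum_j\xi_j^{2\mu_j}$, namely
\[
\langle D\rangle_M^{\mu_\ast/\mu^\ast}u=\langle D\rangle_M^{\mu_\ast/\mu^\ast-2}u+\sum_{j=1}^n\Lambda_{j,M}(D)\,D_{x_j}u\,,\qquad\Lambda_{j,M}(\xi)=\langle\xi\rangle_M^{\mu_\ast/\mu^\ast-2}\xi_j^{2\mu_j-1}\,,
\]
and then invokes Proposition \ref{Fm_prop} for each $\Lambda_{j,M}(D)$. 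Your route is more elementary (no symbol-class bookkeeping for the $\Lambda_{j,M}$) and does not require $\mu_j$ to be integers, which the power $\xi_j^{2\mu_j-1}$ in the paper's multiplier implicitly does. Note also that the restriction $\mu_\ast\le 1$ that you flag explicitly is present in the paper's argument too, only hidden: from $|\xi_j|\le\langle\xi\rangle_M^{1/\mu_j}$ one finds $|\Lambda_{j,M}(\xi)|\lesssim\langle\xi\rangle_M^{\mu_\ast/\mu^\ast-1/\mu_j}$, which is bounded precisely when $\mu_\ast\mu_j\le\mu^\ast$ for every $j$, and, since some $\mu_j$ equals $\mu^\ast$, this forces $\mu_\ast\le 1$. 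So the two proofs exploit the same implicit normalization, and your version has the merit of surfacing it. For the microlocal part both you and the paper are brief (the paper simply says the case is ``completely analogous''); your reduction to the global statement via $v=\psi(D)(\phi u)$ and the commutator identity for $\partial_{x_j}v$ is the right skeleton, and your identification of the boundary term $\psi(D)\bigl((\partial_{x_j}\phi)u\bigr)$ as the only delicate ingredient is correct. Closing it requires two standard but unwritten steps: choosing a single admissible pair $(\phi,\psi)$ that works simultaneously for $u$ and all $\partial_{x_j}u$ (via Proposition \ref{proiezione}), and the disjoint-$M$-cone rapid-decay estimate to absorb $(1-\psi'_0(D))(\phi'_0 u)$ after inserting the cutoffs furnished by the microlocal hypothesis on $u$; you gesture at these without executing them, which is consistent with the level of detail in the paper's own proof.
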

\begin{proof}
Let us argue for simplicity in the case of the spaces $\mathcal F L^p_{s,M}(\mathbb R^n)$, the microlocal case being completely analogous.
\newline
Notice that $u\in\mathcal F L^p_{s+\frac{\mu_*}{\mu^*},M}(\mathbb R^n)$ is completely equivalent to show that $\langle D\rangle_M^{\mu_*/\mu^*}u\in \mathcal F L^p_{s,M}(\mathbb R^n)$. By using the known properties of the Fourier transform, we may rewrite $\langle D\rangle_M^{\mu_*/\mu^*}u$ in the form
\begin{equation*}
\langle D\rangle_M^{\mu_*/\mu^*}u=\langle D\rangle_M^{\mu_*/\mu^*-2}u+\sum\limits_{j=1}^n\Lambda_{j, M}(D)(D_{x_j}u)\,,
\end{equation*}
where $\Lambda_{j,M}(D)$ is the pseudodifferential operator with symbol $\langle\xi\rangle_M^{\mu_*/\mu^*-2}\xi_j^{2\mu_j-1}$, that is
\begin{equation*}
\Lambda_{j,M}(D)v:=\mathcal F^{-1}\left(\langle\xi\rangle_M^{\mu_*/\mu^*-2}\xi_j^{2\mu_j-1}\widehat v\right)\,,\quad j=1,\dots,n\,.
\end{equation*}
Since $\langle\xi\rangle_M^{\mu_*/\mu^*-2}\xi_j^{2\mu_j-1}\in S^{\mu_*/\mu^*-\mu_*/\mu_j}_M$, the result follows at once from Proposition \ref{Fm_prop}.
\end{proof}
\noindent
As a straightforward application of the previous lemma, the following consequence of Theorem \ref{nlmicroreg} can be proved.
\begin{corollary}\label{thm_nl}
Under the same assumptions of Theorem \ref{nlmicroreg} we have that $u\in \mathcal F L^p_{r+1+\frac{\mu_*}{\mu^*}, M, \textup{mcl}}(x
_0,\xi^0)$.
\end{corollary}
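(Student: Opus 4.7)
The plan is to combine the microlocal regularity of the first-order derivatives of $u$ provided by Theorem \ref{nlmicroreg} with the preceding lemma, viewed in its microlocal form. The only ingredient that needs a small comment is the microlocal regularity of $u$ itself at $(x_0,\xi^0)$ at the level $r+1$: this does not come from Theorem \ref{nlmicroreg}, but is already part of its hypotheses, since $u\in \mathcal FL^p_{M,r+1,\textup{loc}}(x_0)$ and local regularity at $x_0$ trivially implies microlocal regularity at $(x_0,\xi^0)$ for every $\xi^0\in\mathbb R^n\setminus\{0\}$.

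Concretely, I would proceed in three short steps. \textbf{Step 1.} Invoke Theorem \ref{nlmicroreg} to get
\[
\partial_{x_j} u\in \mathcal FL^p_{r+1,M,\textup{mcl}}(x_0,\xi^0),\quad j=1,\dots,n.
\]
\textbf{Step 2.} Observe that the assumption $u\in \mathcal FL^p_{M,r+1,\textup{loc}}(x_0)$ of Theorem \ref{nlmicroreg} gives, via Definition \ref{pfl_def_WF} (by taking for example $\psi\equiv 1$ on the whole frequency space, or any cutoff $\psi\in S^0_M$), the microlocal regularity
\[
u\in \mathcal FL^p_{r+1,M,\textup{mcl}}(x_0,\xi^0).
\]
\textbf{Step 3.} Apply the previous lemma in its microlocal formulation with $s=r+1$ to conclude
\[
u\in \mathcal FL^p_{r+1+\frac{\mu_*}{\mu^*},M,\textup{mcl}}(x_0,\xi^0),
\]
which is the desired conclusion.

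There is essentially no hard step here: the nontrivial analytic content has already been extracted in Theorem \ref{nlmicroreg} (propagation of Fourier Lebesgue singularities through the linearized equation) and in the lemma (a gain of $\mu_*/\mu^*$ derivatives from bounding $u$ and all $\partial_{x_j}u$ in the same $\mathcal FL^p_{s,M}$-class, obtained by the representation $\langle D\rangle_M^{\mu_*/\mu^*}u=\langle D\rangle_M^{\mu_*/\mu^*-2}u+\sum_j \Lambda_{j,M}(D)D_{x_j}u$ together with Proposition \ref{Fm_prop}). The only point deserving attention is checking that the lemma's microlocal version applies verbatim: one must verify that the cutoffs $\phi\in C_0^\infty$ and $\psi\in S^0_M$ used to test microlocal membership can be chosen commonly for $u$ and its derivatives, which is standard once a single pair $(\phi,\psi)$ is fixed on a neighborhood of $(x_0,\xi^0)$ and the identity for $\langle D\rangle_M^{\mu_*/\mu^*}$ is localized through Theorem \ref{FL_sym_cor}. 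This is the only place where one has to be slightly careful, but it is routine.
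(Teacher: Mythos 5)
Your proof is correct and coincides with the route the paper intends: the paper itself omits a written proof, presenting the corollary as a straightforward application of the unlabeled lemma combined with Theorem \ref{nlmicroreg}, exactly as in your Steps 1--3. Your Step 2 usefully makes explicit what the paper leaves implicit, namely that the hypothesis $u\in\mathcal FL^p_{r+1,M,\textup{loc}}(x_0)$ already furnishes $u\in\mathcal FL^p_{r+1,M,\textup{mcl}}(x_0,\xi^0)$ (take $\psi\equiv 1$, or any admissible $\psi\in S^0_M$, and invoke Proposition \ref{Fm_prop}), so the lemma's microlocal form can be applied with $s=r+1$.
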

\begin{remark}
{\rm Notice that if $\left(r-\frac{n}{\mu_* q}\right)\delta\ge 1$ then any $u\in \mathcal F L^p_{r+1, M,\textup{loc}}(x_0)$ rightly satisfies \eqref{der}.\\
 Thus $\partial_{x_j}u\in \mathcal F L^p_{r+1-\frac{\mu_*}{\mu_j}, M, \textup{loc}}(x_0)\hookrightarrow\mathcal F L^p_{M, r+1-\delta (r-\frac{n}{\mu_\ast q}), \textup{loc}}(x_0) $ being $\mu_*/\mu_j\le 1\le \left(r-\frac{n}{\mu_* q}\right)\delta$ for each $j=1,\dots,n$. Notice that, for $r>\frac{n}{\mu_* q}+\frac{\mu^*}{\mu_*}$ we can  find $\delta^*\in]0,\mu_*/\mu^*[$ such that $\left(r-\frac{n}{\mu_* q}\right)\delta\ge 1$: it suffices to choose an arbitrary $\delta^*\in\left[\frac{1}{r-\frac{n}{\mu_* q}}, \frac{\mu_*}{\mu^*}\right[$. Hence, applying Theorem \ref{nlmicroreg} with such a $\delta^*$ we conclude that if $r>\frac{n}{\mu_* q}+\frac{\mu^*}{\mu_*}$ and the right-hand side $f$ of equation \eqref{nleq} obeys to condition \eqref{der_f} at a point $(x_0,\xi^0)\in T^{\circ}\mathbb R^n$, then every solution $u\in \mathcal F L^p_{r+1, M,\textup{loc}}(x_0)$ to such an equation satisfies condition \eqref{der_mcl}; in particular $u\in \mathcal F L^p_{r+1+\frac{\mu_*}{\mu^*}, M, \textup{mcl}}(x
_0,\xi^0)$.}
\end{remark}

%%%%%%%%%%%%%%%%%%%%%%%%%%%%%%%%%%%%%%%%%%%%%%%%%%%%%%%%%%%%%%%%%%%%%%%%%%%%%%%%%%%%%%%%%%%%%%%%%%%%%%%%%%%%%%%%%%
\section{Dyadic decomposition}\label{FL-sp_sct}

%%%%%%%%%%%%%%%%%%%%%%%%%%%%%%%%%%%%%%%%%%%%%%%%%%%%%%%%%%%%%%%%%%%%%%%%%%%%%%%%%%%%%%%%%%%%%%%%%%%
%Let $\omega(\xi)$ be a given weight function that, throughout this section, will be always assumed to be smooth.

We are going to introduce a covering of the frequency space $\mathbb R^n_\xi$ made by {\it dyadic shells} modeled on the $M-$ norm $\vert\cdot\vert_M$ defined in \eqref{M-h-n}, and an associated smooth {\it partition of unity}. In the following, this will be used to provide a useful characterization of $M-$homogeneous Fourier Lebesgue regularity.

For a fixed number $K\ge 1$ we set
\begin{equation}\label{dyad_cov}
\begin{split}
&\mathcal C^{M, K}_{-1}:=\left\{\xi\in\mathbb R^n\,:\,\,\vert\xi\vert_M\le K\right\}\,,\\
&\mathcal C^{M, K}_{h}:=\{\xi\in\mathbb R^n\,:\,\,\frac1{K}2^{h-1}\le\vert\xi\vert_M\le K2^{h+1}\}\,,\,\,\,h=0,1,\dots\,.
\end{split}
\end{equation}
It is clear that the shells (crowns) $\mathcal C^{M,K}_{h}$, for $h\ge -1$, provide a covering of $\mathbb R^n$. For the sequel of our analysis, a fundamental property of this covering is that the number of overlapping shells does not increase with the index $h$; more precisely there exists a positive number $N_0=N_0(K)$ such that
\begin{equation}\label{overlap_cond}
\mathcal C^{M,K}_p\cap\mathcal C^{M,K}_q=\emptyset\,,\quad\mbox{for}\,\,\vert p-q\vert>N_0\,.
\end{equation}
Consider now a real-valued function $\Phi=\Phi(t)\in C^\infty([0,+\infty[)$ satisfying
\begin{equation}\label{cond_phi}
\begin{split}
& 0\le\Phi(t)\le 1\,,\quad\forall\,t\ge 0\,,\\
& \Phi(t)=1\quad\mbox{for}\,\,0\le t\le\frac{1}{2K}\,,\quad\Phi(t)=0\quad\mbox{for}\,\,t>K\,,
\end{split}
\end{equation}
and define the sequence $\{\varphi_h\}_{h=-1}^{+\infty}$ in $C^\infty(\mathbb R^n)$ by setting for $\xi\in\mathbb R^n$
\begin{equation}\label{part_unity}
\varphi_{-1}(\xi):=\Phi(\vert\xi\vert_M)\,,\quad\varphi_{h}(\xi):=\Phi\left(\frac{\vert\xi\vert_M}{2^{h+1}}\right)-\Phi\left(\frac{\vert\xi\vert_M}{2^h}\right)\,,\,\,\,h=0,1,\dots\,.
\end{equation}

It is easy to check that the sequence $\{\varphi_h\}_{h=-1}^{\infty}$ defined above enjoys the following properties:
\begin{align}
&{\rm supp}\,\varphi_h\subseteq\mathcal C_h^{M, K}\,,\quad\mbox{for}\,\,h\ge -1;\label{supp_cond}\\
&\sum\limits_{h=-1}\limits^\infty\varphi_h(\xi)=1\,,\quad\text{for all}\,\, \xi\in \mathbb R^n;\label{unity_cond}\\
&\sum\limits_{h=-1}\limits^\infty u_h=u\,, \quad\text{with convergence in }\,\, \mathcal S^\prime(\mathbb R^n)\,,\label{conv_cond}\\
\end{align}
where it is set
\begin{equation}\label{dyad_blocks}
u_h:=\varphi_h(D)u\qquad\mbox{for}\,\,\,h\ge -1\,.
\end{equation}
Moreover, as a consequence of \eqref{overlap_cond}, for any fixed $\xi\in\mathbb{R}^n$ the sum in \eqref{unity_cond} reduces to a finite number of terms independently of the choice of $\xi$ itself, namely an integer $h_0=h_0(\xi)\ge -1$ exists such that
\begin{equation}\label{conv_finite}
\sum\limits_{h=-1}^{\infty}\varphi_h(\xi)\equiv\sum\limits_{h=\tilde h_0}^{h_0+N_0}\varphi_h(\xi)\,,\quad\mbox{where}\,\,\,\tilde h_0=\tilde h_0(\xi):=\max\{-1,h_0-N_0\} 
\end{equation}
and $N_0$ is the integer number involved in \eqref{overlap_cond}.

\medskip
The sequence $\{\varphi_h\}_{h=-1}^{+\infty}$ of functions under the above assumptions will be referred to as a {\em $M-$homogeneous dyadic partition of unity}, and the series in the left-hand side of \eqref{conv_cond} will be called a {\em $M-$homogeneous dyadic decomposition} of $u\in\mathcal S^\prime(\mathbb R^n)$; in the homogeneous case $M=(1,\dots,1)$, such a decomposition reduces to the classical {\em Littlewood--Paley decomposition} of $u$, cf. for example \cite{AG-book}. A characterization similar to that of Sobolev spaces on $L^2$ in terms of a dyadic decomposition of distributions, see \cite{BO}, can be proven in the $M-$homogeneous case for the spaces $\mathcal FL^p_{s,M}$ with arbitrary exponent $p\in[1,+\infty]$.
\begin{proposition}\label{prop_1}
%Let $\omega(\xi)$ be a weight function such that for a positive constant $c>0$
%\begin{equation}\label{bdd_cond}
%\omega(\xi)\ge c\,,\qquad\forall\,\xi\in\mathbb R^n\,.
%\end{equation}
For $M=(\mu_1,\dots,\mu_n)\in\mathbb R^n_+$, $s\in\mathbb R$ and $p\in[1.+\infty]$, a distribution $u\in\mathcal S^\prime(\mathbb R^n)$ belongs to the space $\mathcal FL^p_{s,M}$ if and only if
\begin{equation}\label{Lp_cond1}
\widehat u_h\in L^p(\mathbb R^n)\,,\qquad\mbox{for all}\,\,h\ge -1\,,
\end{equation}
and
\begin{equation}\label{Lp_cond2}
\sum\limits_{h=-1}^{+\infty}2^{shp}\Vert\widehat u_h\Vert_{L^p}^p<+\infty\,.
\end{equation}
Under the above assumptions,
\begin{equation}\label{equiv_FL-norm}
\left(\sum\limits_{h=-1}^{+\infty}2^{shp}\Vert\widehat u_h\Vert_{L^p}^p\right)^{1/p}
\end{equation}
provides a norm in $\mathcal FL^p_{s,M}$ equivalent to \eqref{FL-norm}.

For $p=+\infty$, condition \eqref{Lp_cond2} (as well as the norm \eqref{equiv_FL-norm}) must be modified in the obvious way.
\end{proposition}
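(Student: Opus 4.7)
The guiding observation is that the $M$-weight $\langle\xi\rangle_M$ is essentially constant on each dyadic shell $\mathcal{C}_h^{M,K}$: from the definition of $\mathcal{C}_h^{M,K}$ in \eqref{dyad_cov} there exist constants $c_1,c_2>0$ (depending on $K$ and $s$) such that
\[
c_1\, 2^{sh}\le \langle\xi\rangle_M^{s}\le c_2\,2^{sh}\qquad\text{for every }\xi\in\mathcal{C}_h^{M,K},\ h\ge 0,
\]
and an analogous bound holds uniformly on $\mathcal{C}_{-1}^{M,K}$. Combined with the bounded-overlap property \eqref{overlap_cond}, this is enough to transfer between the $L^p$-norm weighted by $\langle\cdot\rangle_M^s$ and the $\ell^p$-norm of the dyadic pieces $2^{sh}\widehat u_h=2^{sh}\varphi_h\widehat u$.

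\textbf{Necessity.} Assume $u\in\mathcal{FL}^p_{s,M}$, so that $\widehat u$ is a measurable function with $\langle\cdot\rangle_M^{s}\widehat u\in L^p(\mathbb{R}^n)$. Because of \eqref{supp_cond} and $0\le\varphi_h\le 1$,
\[
2^{sh}|\widehat u_h(\xi)|\le 2^{sh}\varphi_h(\xi)|\widehat u(\xi)|\le C\,\langle\xi\rangle_M^{s}\varphi_h(\xi)|\widehat u(\xi)|,
\]
so that $\widehat u_h\in L^p$ and $2^{shp}\|\widehat u_h\|_{L^p}^p\le C^{p}\int_{\mathcal{C}_h^{M,K}}\langle\xi\rangle_M^{sp}|\widehat u(\xi)|^{p}\,d\xi$. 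Summing in $h$ and invoking \eqref{overlap_cond} (each point $\xi$ lies in at most $N_0+1$ shells) yields
\[
\sum_{h=-1}^{+\infty}2^{shp}\|\widehat u_h\|_{L^p}^{p}\le C^{p}(N_0+1)\,\|u\|_{\mathcal{FL}^p_{s,M}}^{p},
\]
which proves \eqref{Lp_cond1}--\eqref{Lp_cond2} and one half of the norm equivalence. The case $p=+\infty$ is the same with $\sup$ replacing the sum.

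\textbf{Sufficiency.} Conversely, suppose $\widehat u_h\in L^p$ for every $h\ge -1$ and the sum in \eqref{Lp_cond2} is finite. By \eqref{unity_cond} and \eqref{conv_finite}, for every $\xi$ the identity $\widehat u(\xi)=\sum_{h\ge -1}\varphi_h(\xi)\widehat u(\xi)$ reduces to a sum of at most $N_0+1$ terms, so $\widehat u$ is a measurable function and the lower bound $\langle\xi\rangle_M^{s}\le c_1^{-1}\sum_h 2^{sh}\varphi_h(\xi)$ (valid after adjusting constants, since each $\xi$ is covered) yields
\[
\langle\xi\rangle_M^{s}|\widehat u(\xi)|\le C'\sum_{h\ge -1}2^{sh}\,\mathbf{1}_{\mathcal{C}_h^{M,K}}(\xi)\,|\widehat u_h(\xi)|.
\]
For $1\le p<+\infty$, the finite-overlap property and Jensen's inequality give, pointwise,
\[
\left(\sum_{h\ge -1}2^{sh}\mathbf{1}_{\mathcal{C}_h^{M,K}}(\xi)|\widehat u_h(\xi)|\right)^{p}\le (N_0+1)^{p-1}\sum_{h\ge -1}2^{shp}|\widehat u_h(\xi)|^{p},
\]
since at most $N_0+1$ summands on the left are nonzero. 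Integrating over $\mathbb{R}^n$ yields
\[
\|u\|_{\mathcal{FL}^p_{s,M}}^{p}\le (C')^{p}(N_0+1)^{p-1}\sum_{h\ge -1}2^{shp}\|\widehat u_h\|_{L^p}^{p},
\]
completing the equivalence of norms. For $p=+\infty$, the same reasoning gives $\|u\|_{\mathcal{FL}^\infty_{s,M}}\le C'(N_0+1)\sup_{h}2^{sh}\|\widehat u_h\|_{L^\infty}$.

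\textbf{Main obstacle.} There is no deep obstacle; the argument is bookkeeping built on two pillars: the pointwise equivalence $\langle\xi\rangle_M^{s}\asymp 2^{sh}$ on $\mathcal{C}_h^{M,K}$ and the finite-overlap bound \eqref{overlap_cond}. The only mildly delicate point is making sure that in the sufficiency part the series $\sum_h\varphi_h\widehat u$ represents a genuine function (rather than only a tempered distribution), and this is immediate from \eqref{conv_finite}, which turns the sum into a pointwise finite one.
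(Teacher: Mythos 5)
Your proof is correct and follows essentially the same route as the paper: both rest on the pointwise equivalence $\langle\xi\rangle_M^s\asymp 2^{sh}$ on $\mathcal C_h^{M,K}$ (cf. \eqref{equiv:h}) and the finite-overlap property \eqref{overlap_cond}, followed by multiplication by $\langle\xi\rangle_M^{sp}$ and integration. The only notable (and harmless) difference is that for $p<\infty$ the paper derives a symmetric two-sided pointwise bound on smooth $u$ and invokes density of $\mathcal S$ in $\mathcal FL^p_{s,M}$, while you argue the two implications directly for arbitrary $u\in\mathcal S'$; also the overlap number should read $2N_0+1$ rather than $N_0+1$, which is immaterial.
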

%%%%%% PROOF %%%%%%%
\begin{proof}
Let us first observe that the $M-$ weight $\langle\cdot\rangle_M$ is equivalent to $2^h$ on the support of $\varphi_h$; indeed
\begin{equation}\label{equiv:h}
\begin{split}
&1\le\langle\xi\rangle_M\le(1+K^2)^{1/2}\,,\qquad\mbox{for}\,\,\xi\in{\supp}\,\varphi_{-1}\,;\\
&\frac1{2K}2^h\le\langle\xi\rangle_M\le (1+4K^2)^{1/2}2^h\,,\qquad\mbox{for}\,\,\xi\in{\rm supp}\,\varphi_h\,\,\,\mbox{and}\,\,\,h\ge 0\,,
\end{split}
\end{equation}
being $K$ the positive number involved in \eqref{cond_phi}.

For $p\in[1,+\infty[$, it is enough arguing on smooth functions $u\in\mathcal S(\mathbb R^n)$ in view of density of $\mathcal S(\mathbb R^n)$ in $\mathcal FL^p_{s,M}$. For $\xi\in\mathbb R^n$, from \eqref{unity_cond}, \eqref{conv_finite} we derive
\begin{equation}\label{equiv:prel}
\begin{split}
\sum\limits_{h=-1}^{\infty}\vert\widehat u_h(\xi)\vert^p &\equiv\sum\limits_{h=\tilde h_0}^{h_0+N_0}\varphi_h(\xi)^p\vert\widehat u(\xi)\vert^p\le\vert\widehat u(\xi)\vert^p\equiv\left(\sum\limits_{h=\tilde h_0}^{h_0+N_0}\varphi_h(\xi)\vert\widehat u(\xi)\vert\right)^p\\
&\le C_{N_0, p}\sum\limits_{h=\tilde h_0}^{h_0+N_0}\varphi_h(\xi)^p\vert\widehat u(\xi)\vert^p\equiv C_{N_0, p}\sum\limits_{h=-1}^{\infty}\vert\widehat u_h(\xi)\vert^p\,,
\end{split}
\end{equation}
where $h_0=h_0(\xi)$ and $\tilde h_0=\tilde h_0(\xi)$ are the integers from \eqref{conv_finite} and $C_{N_0, p}>1$ is some constant depending only on $N_0$ and $p$ (in particular, it is independent of $h$ and $\xi$). Hence, multiplying each side of \eqref{equiv:prel} by $\langle\xi\rangle_M^{sp}$, making use of \eqref{equiv:h} and integrating on $\mathbb R^n$ yields
\[
\frac1{C_{s,p,K}}\sum\limits_{h=-1}^{\infty}2^{shp}\Vert\widehat u_h\Vert^p\le\Vert u\Vert^p_{\mathcal FL^p_{s,M}}\le C_{s,p,K}\sum\limits_{h=-1}^{\infty}2^{shp}\Vert\widehat u_h\Vert^p\,,
\]
for a suitable constant $C_{s, p, K}>1$ depending only on $s$, $p$ and $K$. This proves the statement of Proposition \ref{prop_1}.

For the case $p=+\infty$, let us argue directly (in the absence of the density of $\mathcal S(\mathbb R^n)$ in $\mathcal FL^\infty_{s,M}$). Thus for arbitrary $u\in\mathcal FL^\infty_{s,M}$ and every $h\ge -1$, writing
\begin{equation}\label{id1}
\widehat u_h=\frac{\varphi_h}{\langle\cdot\rangle_M^s}\langle\cdot\rangle_M^s\widehat u\,,
\end{equation}
we get $\widehat u_h\in L^\infty(\mathbb R^n)$, since $\langle\cdot\rangle_M^s\widehat u\in L^\infty(\mathbb R^n)$ and, in view of \eqref{equiv:h} and \eqref{supp_cond},
\begin{equation}\label{est_phi_h}
2^{sh}\left\vert\frac{\varphi_h(\xi)}{\langle\xi\rangle_M^s}\right\vert\le C_{s,K}\,,\qquad\forall\,\xi\in\mathbb R^n\,,
\end{equation}
where the constant $C_{s,K}$ depends only on $s$ and $K$. From \eqref{id1} and \eqref{est_phi_h} 
\[
2^{sh}\vert\widehat u_h(\xi)\vert\le C_{s,K}\Vert u\Vert_{\mathcal FL^\infty_{s,M}}\,,\quad\forall\,\xi\in\mathbb R^n\,,\,\,\,h\ge -1\,,
\]
follows at once, which implies \eqref{Lp_cond2} with $p=+\infty$.

Conversely, let us suppose that $u\in\mathcal S^\prime(\mathbb R^n)$ makes conditions \eqref{Lp_cond1}, \eqref{Lp_cond2} to be satisfied. From \eqref{overlap_cond}, \eqref{conv_cond} and \eqref{equiv:h} we get for an arbitrary $\ell\ge -1$ and every $\xi\in\mathcal C^{M,K}_\ell$:
\begin{equation*}
\begin{split}
\vert\langle\xi\rangle_M^s &\widehat u(\xi)\vert\le\sum\limits_{h=-1}^{+\infty}\vert\langle\xi\rangle_M^s\widehat u_h(\xi)\vert=\sum\limits_{h=\ell-N_0}^{\ell+N_0}\vert\langle\xi\rangle_M^s\widehat u_h(\xi)\vert\le C_{s,K}\sum\limits_{h=\ell-N_0}^{\ell+N_0}2^{sh}\vert\widehat u_{h}(\xi)\vert\\
&\le C_{s,K}(2N_0+1)\sup\limits_{h\ge -1}2^{sh}\Vert\widehat u_h\Vert_{L^\infty}\,,
\end{split}
\end{equation*}
giving that $u$ belongs to $\mathcal FL^\infty_{s,M}$ and satisfies
\begin{equation*}
\Vert u\Vert_{\mathcal FL^\infty_{s,M}}\le C_{s,K}(2N_0+1)\sup\limits_{h\ge -1}2^{sh}\Vert\widehat u_h\Vert_{L^\infty}\,.
\end{equation*}
The proof is complete.
\end{proof}
\begin{remark}\label{rmk:partition_unity}
{\rm Arguing along the same lines followed in the proof of estimates \eqref{est_phi_h}, one can prove the following estimates for the derivatives of functions $\varphi_h$'s: for all $\nu\in\mathbb Z^n_+$ a positive constant $C_\nu$ exists such that
\begin{equation}\label{der_phi_h}
\vert D^\nu_\xi\varphi_h(\xi)\vert\le C_\nu 2^{-h\langle\nu,1/M\rangle}\,,\quad\forall\,\xi\in\mathbb R^n\,,\,\,\,h=-1,0,1,\dots\,.
\end{equation}
Notice also, in view of \eqref{equiv:h}, estimates
\eqref{der_phi_h} can be stated in the equivalent form
\begin{equation*}
\vert D^\nu_\xi\varphi_h(\xi)\vert\le C_\nu \langle\xi\rangle_M^{-\langle\nu,1/M\rangle}\,,\quad\forall\,\xi\in\mathbb R^n\,,\,\,\,h=-1,0,1,\dots\,.
\end{equation*}
}
\end{remark}
%%%%% END PROOF%%%%%%%%%%%%%%%%%%%%%%%
Along the same arguments of Bony \cite{BO}, one can show the following
\begin{proposition}\label{prop_2}
Let $M=(\mu_1,\dots,\mu_n)\in\mathbb R^n_+$ and $p\in[1.+\infty]$.
\begin{itemize}
\item[(i)] For $s\in\mathbb R$, let $\left\{u_{h}\right\}_{h=-1}^{+\infty}$ be a sequence of distributions $u_h\in\mathcal S^\prime(\mathbb R^n)$ satisfying the following conditions:

\smallskip
\begin{itemize}
\item[(a)] there exists a constant $K\ge 1$ such that
\begin{equation*}
{\rm supp}\,\widehat u_h\subseteq\mathcal C^{M,K}_h\,,\qquad\mbox{for all}\,\,h\ge -1\,;
\end{equation*}
\item[(b)]
\begin{equation}\label{sum_cond}
\sum\limits_{h=-1}^{+\infty}2^{shp}\Vert\widehat u_h\Vert_{L^p}^p<+\infty
\end{equation}
(with obvious modification for $p=+\infty$).
\end{itemize}
Then $u=\sum\limits_{h=-1}^{+\infty}u_h\in\mathcal S^\prime(\mathbb R^n)$ (where the series in the right-hand side is convergent in $\mathcal S^\prime(\mathbb R^n)$) belongs to $\mathcal FL^p_{s,M}$ and satisfies
\begin{equation}\label{est_6}
\Vert u\Vert_{\mathcal FL^p_{s,M}}\le C_{s,p,K}\left(\sum\limits_{h=-1}^{+\infty}2^{shp}\Vert\widehat u_h\Vert_{L^p}^p\right)^{1/p}\,,
\end{equation}
with positive constant $C$ depending only on $s$, $p$ and $K$.
\item[(ii)] If $s>0$, the same result stated in (i) is still valid when a distribution sequence $\left\{u_{h}\right\}_{h=-1}^{+\infty}$ satisfies the following condition

\smallskip
\begin{itemize}
\item[(a')] there exists a constant $K\ge 1$ such that
\begin{equation*}
{\rm supp}\,\widehat u_h\subseteq\mathcal B^{M,K}_h:=\{\xi\in\mathbb R^n\,:\,\,\vert\xi\vert_M\le K2^{h+1}\}\,,\qquad\mbox{for all}\,\,h\ge -1\,,
\end{equation*}
\end{itemize}
instead of (a) (notice that $\mathcal B_{-1}^{M,K}\equiv \mathcal C_{-1}^{M,K}$), and condition (b).
\end{itemize}
\end{proposition}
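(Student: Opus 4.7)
I would reduce both statements to the dyadic characterization in Proposition \ref{prop_1}, after verifying that $\sum_h u_h$ converges in $\mathcal S'(\mathbb R^n)$. Part (i) can be handled directly through the finite overlap of the shells, while part (ii) requires an auxiliary dyadic localization and crucially exploits $s>0$ via Young's inequality.

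\textbf{Part (i).} Assumption (a), combined with the overlap property \eqref{overlap_cond}, provides an integer $N_0=N_0(K)$ such that, at every $\xi\in\mathbb R^n$, at most $2N_0+1$ of the terms $\widehat{u_h}(\xi)$ are nonzero. For $p<+\infty$, a discrete H\"older inequality applied to this finite sum yields
\[
|\widehat u(\xi)|^p\le (2N_0+1)^{p-1}\sum_{h=-1}^{+\infty}|\widehat{u_h}(\xi)|^p.
\]
Multiplying by $\langle\xi\rangle_M^{sp}$, using the equivalence $\langle\xi\rangle_M\sim 2^h$ on $\supp\widehat{u_h}\subseteq\mathcal C^{M,K}_h$ recorded in \eqref{equiv:h}, and integrating, one obtains \eqref{est_6}. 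The case $p=+\infty$ is analogous: on each shell only finitely many $\widehat{u_h}$ contribute, and the weight is again absorbed by \eqref{equiv:h}.

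\textbf{Part (ii).} Since the balls $\mathcal B^{M,K}_h$ overlap infinitely as $h\to+\infty$, the previous argument breaks down. Instead, I would fix a standard $M$-homogeneous dyadic partition of unity $\{\varphi_j\}_{j\ge-1}$ as in \eqref{part_unity}, with $\supp\varphi_j\subseteq\mathcal C^{M,K'}_j$, and set $v_j:=\varphi_j(D)u$. Comparing the conditions $|\xi|_M\gtrsim 2^j$ on $\supp\varphi_j$ (for $j\ge 0$) and $|\xi|_M\le K\cdot 2^{h+1}$ on $\supp\widehat{u_h}$ yields an integer $N_1$ such that $\varphi_j(D)u_h\equiv 0$ whenever $h<j-N_1$. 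Hence
\[
\|\widehat{v_j}\|_{L^p}\le C\!\!\sum_{h\ge j-N_1}\!\!\|\widehat{u_h}\|_{L^p}.
\]
Setting $a_h:=2^{sh}\|\widehat{u_h}\|_{L^p}$ and $m=h-j$, the sequence $(2^{sj}\|\widehat{v_j}\|_{L^p})_j$ is dominated by the convolution of $(a_h)\in\ell^p$ with $(2^{-sm}\mathbf 1_{m\ge -N_1})_m$. The latter sequence lies in $\ell^1$ \emph{precisely because $s>0$}, so Young's inequality bounds the convolution in $\ell^p$-norm by a constant multiple of $\|(a_h)\|_{\ell^p}$, and Proposition \ref{prop_1} delivers \eqref{est_6}.

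\textbf{Convergence in $\mathcal S'$ and main obstacle.} To justify that $\sum u_h$ converges in $\mathcal S'(\mathbb R^n)$, I would apply the preceding estimates to the tails $\sum_{h\ge H}u_h$: the summability \eqref{sum_cond} then shows that the partial sums form a Cauchy sequence in the Banach space $\mathcal{FL}^p_{s,M}$, which embeds continuously into $\mathcal S'(\mathbb R^n)$. The main conceptual obstacle is confined to part (ii): the positivity of $s$ is exactly what renders the geometric series $\sum_{m\ge -N_1}2^{-sm}$ finite, so that Young's inequality becomes available; without this sign condition, no bound of the form \eqref{est_6} can be expected under (a') alone.
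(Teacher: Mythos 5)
Your proof is correct and simply fills in the classical Bony argument that the paper itself only cites for this proposition (stating merely that it follows ``along the same arguments of Bony'' without providing details). Both parts follow the standard route: finite shell overlap plus discrete H\"older in (i), and, after the auxiliary dyadic relocalization $v_j:=\varphi_j(D)u$, a domination of $2^{sj}\Vert\widehat{v_j}\Vert_{L^p}$ by a discrete convolution against a one-sided geometric $\ell^1$-tail, whose summability is precisely where $s>0$ enters, in (ii).
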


%%%%%%%%%%%%%%%%%%%%%%%%%%%%%%%%%%%%%%%%%%%%%%%%%%%
\section{Proof of Theorem \ref{FL_thm}}\label{PROOF1}
Following closely the arguments in Coifmann--Meyer \cite{CM-book}, see also Garello--Morando \cite{GM-05}, one proves that every zero order symbol in the  classes $\mathcal FL^p_{r,M}S^0_{M,\delta}(N)$ can be expanded into a series of ``elementary terms''.
\begin{lemma}\label{el_symb_lemma}
For $p\in[1,+\infty]$, $r>\frac{n}{\mu_\ast q}$ (being $q$ the conjugate exponent of $p$), $N>n+1$ a positive integer number and $\delta\in[0,1]$, let $a(x,\xi)\in\mathcal FL^{p}_{r,M}S^0_{M,\delta}(N)$. Then there exist positive numbers $C, L$, $K>1$ and a sequence $\{c_k\}_{k\in\mathbb Z^n}$ of positive numbers $c_k$ satisfying $\sum\limits_{k\in\mathbb Z^n}c_k<+\infty$ such that
\begin{equation}\label{dec_el_symb}
a(x,\xi)=\sum\limits_{k\in\mathbb Z^n}c_ka_k(x,\xi)\,,
\end{equation}
with absolute convergence in $L^\infty(\mathbb R^n\times\mathbb R^n)$ of the series in the right-hand side, and, for each $k\in\mathbb Z^n$,
\begin{equation}\label{k_el_symb}
a_k(x,\xi)=\sum\limits_{h=-1}^{+\infty}d_h^k(x)\psi^k_h(\xi)\,,
\end{equation}
with suitable sequences $\{d^k_h\}_{h=-1}^{+\infty}$ in $\mathcal FL^1\cap\mathcal FL^p_{r,M}$ and $\{\psi^k_h\}_{h=-1}^{+\infty}$ in $C^\infty_0(\mathbb R^n)$ obeying the following conditions:
\begin{itemize}
\item[{\rm (a)}] $\Vert d^k_h\Vert_{\mathcal FL^1}\le H\,,\quad\Vert d^k_h\Vert_{\mathcal FL^p_{r,M}}\le H 2^{h\delta\left(r-\frac{n}{\mu_\ast q}\right)}$ for all $h=-1,0,\dots$;
\item[{\rm (b)}] ${\rm supp}\,\psi^k_h\subseteq\mathcal C^{M,K}_h$, $h=-1,0,\dots$;
\item[{\rm (c)}] $\vert\partial^\alpha\psi^k_h(\xi)\vert\le C2^{-\langle\alpha,1/M\rangle h}$, $\forall\,\xi\in\mathbb R^n$, $\vert\alpha\vert\le N$.
\end{itemize}
\end{lemma}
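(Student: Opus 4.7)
The plan is to realize $a(x,\xi)$ as a superposition of tensorized symbols, by first localizing in the frequency variable via the $M$-homogeneous dyadic partition $\{\varphi_h\}_{h\ge -1}$ of Section \ref{FL-sp_sct} and then Fourier-expanding each dyadic piece in the rescaled frequency variable. Concretely, write $a(x,\xi) = \sum_{h\ge -1} a_h(x,\xi)$ with $a_h(x,\xi) := a(x,\xi)\varphi_h(\xi)$, supported for $h\ge 0$ in the $M$-shell $\mathcal{C}^{M,K}_h$. Performing the anisotropic dilation $\eta := 2^{-h/M}\xi$, the rescaled pieces $b_h(x,\eta) := a_h(x,2^{h/M}\eta)$ are all supported in one fixed compact set $Q\subset \mathbb{R}^n$ independent of $h$. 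Fix a cube $[-\pi L,\pi L]^n \supset Q$ and a cut-off $\chi \in C_0^\infty(\mathbb{R}^n)$ identically $1$ on $Q$ with support inside the cube. Expanding $b_h(x,\cdot)$ as a Fourier series on the cube and undoing the rescaling yields $a_h(x,\xi) = \sum_{k\in\mathbb{Z}^n} \tilde c^{\,k}_h(x)\,\psi^k_h(\xi)$ where
\[
\psi^k_h(\xi) := \chi(2^{-h/M}\xi)\, e^{i k\cdot 2^{-h/M}\xi / L}, \qquad \tilde c^{\,k}_h(x) := \frac{1}{(2\pi L)^n}\int b_h(x,\eta)\, e^{-ik\cdot\eta / L}\, d\eta.
\]
The case $h=-1$ is handled by the same construction with the trivial rescaling.

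The key step is to extract the decay in $k$ of $\tilde c^{\,k}_h$ via integration by parts in $\eta$. For an integer $N'\le N$ with $n < N'$ (available since $N>n+1$),
\[
(1+|k/L|^2)^{N'/2}\,\tilde c^{\,k}_h(x) = \frac{1}{(2\pi L)^n}\int (1-\Delta_\eta)^{N'/2} b_h(x,\eta)\, e^{-ik\cdot\eta/L}\, d\eta.
\]
The anisotropic chain rule together with Leibniz writes $\partial^\alpha_\eta b_h(x,\eta)$ as a linear combination of terms $2^{h\langle\alpha,1/M\rangle}(\partial^\beta_\xi a)(x,2^{h/M}\eta)(\partial^\gamma_\xi\varphi_h)(2^{h/M}\eta)$ with $\beta+\gamma=\alpha$. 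Invoking Remark \ref{rmk:partition_unity} for the $\varphi_h$-derivatives, the symbol estimates \eqref{FL_est}, \eqref{X_est} with $m=0$, and the equivalence $\langle 2^{h/M}\eta\rangle_M\sim 2^h$ on $\mathrm{supp}\,b_h$, the factor $2^{h\langle\alpha,1/M\rangle}$ cancels exactly against the gain $\langle\xi\rangle_M^{-\langle\alpha,1/M\rangle}$, yielding
\[
\|\partial^\alpha_\eta b_h(\cdot,\eta)\|_{\mathcal{FL}^1}\le C, \qquad \|\partial^\alpha_\eta b_h(\cdot,\eta)\|_{\mathcal{FL}^p_{r,M}}\le C\, 2^{h\delta(r - n/(\mu_\ast q))}
\]
uniformly in $\eta$ and $h$. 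Commuting the $x$-Fourier transform through the $\eta$-integral via Minkowski's inequality then produces
\[
\|\tilde c^{\,k}_h\|_{\mathcal{FL}^1}\le C(1+|k|)^{-N'}, \qquad \|\tilde c^{\,k}_h\|_{\mathcal{FL}^p_{r,M}}\le C(1+|k|)^{-N'}\, 2^{h\delta(r - n/(\mu_\ast q))}.
\]

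To fit the claimed structure \eqref{dec_el_symb}, \eqref{k_el_symb}, pick $\epsilon\in(0,N'-n)$, set $c_k := C_0(1+|k|)^{-n-\epsilon}$ with $C_0$ such that $\sum_k c_k<+\infty$, and define $d^k_h := \tilde c^{\,k}_h / c_k$, $a_k := \sum_h d^k_h \psi^k_h$. Property (a) then holds with a uniform $H$; property (b) is immediate from $\mathrm{supp}\,\psi^k_h\subset \mathrm{supp}\,\varphi_h \subset \mathcal{C}^{M,K}_h$; and property (c) follows from a direct estimate of $\partial^\alpha\psi^k_h$ using Remark \ref{rmk:partition_unity} and the fact that each $\eta$-derivative of $e^{ik\cdot\eta/L}$ costs a factor polynomial in $|k|$ which is absorbed into $C$. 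Finally, the bounded-overlap property \eqref{overlap_cond} of the shells ensures that for each fixed $(x,\xi)$ only finitely many terms of $\sum_h d^k_h(x)\psi^k_h(\xi)$ are nonzero, whence $\|a_k\|_{L^\infty(\mathbb{R}^{2n})}$ is uniformly bounded in $k$ and $\sum_k c_k a_k$ converges absolutely in $L^\infty(\mathbb{R}^{2n})$.

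The technical core I expect to grind on is the verification of (a) in the \emph{weighted} norm $\mathcal{FL}^p_{r,M}$ with the precise exponent $2^{h\delta(r-n/(\mu_\ast q))}$: it depends on the exact matching between the Jacobian produced by the anisotropic rescaling $\xi = 2^{h/M}\eta$ and the derivative gain encoded in the definition of $\mathcal{FL}^p_{r,M}S^0_{M,\delta}(N)$, combined with a uniform-in-$h$ control of the constants obtained after commuting the $x$-Fourier transform with the $\eta$-integration defining $\tilde c^{\,k}_h$.
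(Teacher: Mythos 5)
Your construction is exactly the Coifman--Meyer decomposition that the paper cites (and essentially reproduces from \cite{CM-book}, \cite{GM-05}): dyadic localization in $\xi$ by $\{\varphi_h\}$, anisotropic rescaling $\xi=2^{h/M}\eta$, Fourier expansion of each rescaled block on a fixed torus, and integration by parts in $\eta$ to produce decay in $k$. The reconstruction identity, the support property (b), the cancellation between the Jacobian factor $2^{h\langle\alpha,1/M\rangle}$ and the decay $\langle\xi\rangle_M^{-\langle\alpha,1/M\rangle}$ from \eqref{FL_est}, \eqref{X_est}, and the resulting bounds $\Vert\tilde c^{\,k}_h\Vert_{\mathcal FL^1}\lesssim (1+|k|)^{-N'}$, $\Vert\tilde c^{\,k}_h\Vert_{\mathcal FL^p_{r,M}}\lesssim (1+|k|)^{-N'}2^{h\delta(r-\frac{n}{\mu_\ast q})}$, all check out and mirror the intended proof.

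However, your justification of (c) has a real gap. You write that the factor $(1+|k|)^{|\alpha|}$ produced when $\partial^\alpha_\xi$ hits $e^{ik\cdot 2^{-h/M}\xi/L}$ is ``absorbed into $C$,'' but the lemma requires a single constant $C$, independent of $k$, so an unbounded polynomial in $|k|$ cannot be absorbed there. If instead you try to move that factor into $d^k_h$, you run into an irremovable budget constraint: the number of $\xi$-derivatives of $a$ you control is $N$, so the integration-by-parts order is $N'\le N$; the normalization $\psi^k_h\mapsto (1+|k|)^{-\rho}\psi^k_h$, $d^k_h\mapsto (1+|k|)^{\rho}d^k_h$ that makes all $\partial^\alpha\psi^k_h$ ($|\alpha|\le N$) $k$-uniformly bounded forces $\rho\ge N$, whence $\Vert d^k_h\Vert_{\mathcal FL^1}\lesssim (1+|k|)^{\,\rho-N'+n+\epsilon}\gtrsim(1+|k|)^{\,n+\epsilon}$, which violates (a). With $N'\le N$ there is no choice of $\rho$ satisfying both. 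So the construction as you wrote it produces (c) with a constant growing like $(1+|k|)^{|\alpha|}$, not a $k$-uniform $C$. You should either (i) explicitly state the weaker bound $\vert\partial^\alpha\psi^k_h\vert\le C(1+|k|)^{|\alpha|}2^{-\langle\alpha,1/M\rangle h}$, noting that only the case $\alpha=0$ (where the constant \emph{is} $k$-uniform) enters the proofs of Lemmas \ref{T1_lemma}--\ref{T3_lemma}; or (ii) reduce the range of $\alpha$ in (c) to $|\alpha|\le N-n-1$ and correspondingly choose $\rho=N-n-1$, $N'=N$. The paper itself does not escape this bookkeeping issue; it simply defers to the reference. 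But as written, your ``absorbed into $C$'' step is incorrect and should be replaced by one of these patches.
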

In view of \eqref{overlap_cond} and condition (b) above, the series in the right-hand side of \eqref{k_el_symb} has only finitely many nonzero terms at each point $(x,\xi)$. Conditions (a)-(c) above also imply that $a_k(x,\xi)$ defined by \eqref{k_el_symb} belongs to $\mathcal FL^{p}_{r,M}S^0_{M,\delta}(N)$ for each $k\in\mathbb Z^n$. A symbol of the form \eqref{k_el_symb} will be referred to as an {\em elementary symbol}.
\medskip

The proof of Theorem \ref{FL_thm} follows the same arguments as in \cite{GM-08}. Without loss of generality, we may reduce to prove the statement of the theorem in the case of a symbol $a(x,\xi)\in\mathcal FL^p_{r,M}S^0_{M,\delta}(N)$. Also, because of Lemma \ref{el_symb_lemma}, it will be enough to show the result in the case when $a(x,\xi)$ is an elementary symbol, namely
\[
a(x,\xi)=\sum\limits_{h=-1}^{+\infty}d_h(x)\psi_h(\xi)\,,
\]
where the sequences $\{d_h\}_{h=-1}^{+\infty}$ and $\{\psi_h\}_{h=-1}^{+\infty}$ obey the assumptions (a)--(c).

In view of Lemma \ref{el_symb_lemma} there holds
\begin{equation}\label{dec_oprt}
a(x,D)u(x)=\sum\limits_{h=-1}^{+\infty}d_h(x)u_h(x)\,,\quad\forall\,u\in\mathcal S(\mathbb R^n)\,,
\end{equation}
where
\begin{equation}\label{uh}
u_h:=\psi_h(D)u\,,\qquad h=-1,0,\dots\,.
\end{equation}
Let $\{\varphi_\ell\}_{\ell\ge -1}$ be an $M-$homogeneous dyadic partition of unity; then we may decompose \eqref{dec_oprt} as follows
\begin{equation}
a(x,D)u(x)=\sum\limits_{h=-1}^{+\infty}\sum\limits_{\ell=-1}^{+\infty}d_{h,\ell}(x)u(x)=T_1u(x)+T_2u(x)+T_3u(x)\,,
\end{equation}
where it is set
\begin{equation}\label{T1}
T_1u(x):=\sum\limits_{h=N_0-1}^{+\infty}\sum\limits_{\ell=-1}^{h-N_0}d_{h,\ell}(x)u_h(x)\,,
\end{equation}
\begin{equation}\label{T2}
T_2u(x):=\sum\limits_{h=-1}^{+\infty}\sum\limits_{\ell=\ell_h}^{h+N_0-1}d_{h,\ell}(x)u_h(x)\qquad (\ell_h:=\max\{-1,h-N_0+1\})\,,
\end{equation}
\begin{equation}\label{T3}
T_3u(x):=\sum\limits_{\ell=N_0-1}^{+\infty}\sum\limits_{h=-1}^{\ell-N_0}d_{h,\ell}(x)u_h(x)\,,
\end{equation}
with sufficiently large integer $N_0>0$, and
\begin{equation}
d_{h,\ell}:=\varphi_\ell(D)d_h\,,\qquad h,\ell=-1,0,\dots\,.
\end{equation}
Then the proof of Theorem \ref{FL_thm} follows from combining the following continuity results concerning the different operators $T_1$, $T_2$, $T_3$.
\medskip

Henceforth, the following general notation will be adopted: for every pair of Banach spaces $X$, $Y$, we will write $\Vert T\Vert_{X\rightarrow Y}$ to mean the operator norm of every linear bounded operator $T$ from $X$ into $Y$.
\begin{lemma}\label{T1_lemma}
For all $s\in\mathbb R$, $T_1$ extends to a linear bounded operator
\begin{equation}\label{T1_bdd}
T_1:\mathcal FL^p_{s,M}\rightarrow\mathcal FL^p_{s,M}
\end{equation}
and there exists a positive constant $C=C_{s,p}$ such that
\begin{equation}\label{norma_T1}
\Vert T_1\Vert_{\mathcal FL^p_{s,M}\rightarrow \mathcal FL^p_{s,M}}\le C\sup\limits_{h\ge -1}\Vert d_h\Vert_{\mathcal FL^1}
\end{equation}
\end{lemma}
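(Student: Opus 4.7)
The structure of $T_1$ suggests using the dyadic characterization of $\mathcal{FL}^p_{s,M}$ from Proposition~\ref{prop_2}(i), so the first task is to exhibit a suitable frequency localization for each ``$h$-block'' of the sum. Set
\[
v_h(x) := \Big(\sum_{\ell=-1}^{h-N_0} d_{h,\ell}(x)\Big)\, u_h(x),\qquad h\ge N_0-1,
\]
so that $T_1u = \sum_h v_h$. Since $\widehat{d_{h,\ell}} = \varphi_\ell \widehat{d_h}$ is supported in $\mathcal{C}^{M,K}_\ell$ and $\widehat{u_h} = \psi_h \widehat{u}$ in $\mathcal{C}^{M,K}_h$, the Fourier transform of the product $d_{h,\ell}u_h$ is supported in the algebraic sum $\mathcal{C}^{M,K}_\ell + \mathcal{C}^{M,K}_h$. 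Using $M$-subadditivity \eqref{sub-add} together with the constraint $\ell \le h-N_0$, with $N_0$ chosen sufficiently large (depending only on $K$ and the constant in \eqref{sub-add}), one checks that this sum lies in a shell $\mathcal{C}^{M,K'}_h$ for a new constant $K'\ge 1$. Consequently $\widehat{v_h}$ is supported in $\mathcal{C}^{M,K'}_h$ for every $h$, which is exactly the hypothesis (a) of Proposition~\ref{prop_2}(i).

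\textbf{Norm estimate on each block.} By Proposition~\ref{prop_2}(i),
\[
\|T_1u\|_{\mathcal{FL}^p_{s,M}}^p \le C_{s,p,K'} \sum_{h\ge -1} 2^{shp}\|\widehat{v_h}\|_{L^p}^p,
\]
(with the standard modification when $p=+\infty$). Next, I will bound $\|\widehat{v_h}\|_{L^p}$ via Young's convolution inequality: writing $\widehat{v_h} = \Big(\sum_{\ell\le h-N_0}\varphi_\ell \widehat{d_h}\Big) \ast \widehat{u_h}$ and using $0\le \sum_{\ell\le h-N_0}\varphi_\ell \le 1$ pointwise (a consequence of \eqref{unity_cond} and the positivity built into \eqref{cond_phi}--\eqref{part_unity}), one obtains
\[
\|\widehat{v_h}\|_{L^p} \le \Big\|\sum_{\ell\le h-N_0}\varphi_\ell \widehat{d_h}\Big\|_{L^1}\|\widehat{u_h}\|_{L^p} \le \|\widehat{d_h}\|_{L^1}\|\widehat{u_h}\|_{L^p} \le \big(\sup_{h}\|d_h\|_{\mathcal{FL}^1}\big)\|\widehat{u_h}\|_{L^p}.
\]

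\textbf{Bringing back the norm of $u$.} It remains to estimate $\sum_h 2^{shp}\|\widehat{u_h}\|_{L^p}^p$ in terms of $\|u\|_{\mathcal{FL}^p_{s,M}}$. Since $\widehat{u_h} = \psi_h \widehat{u}$, condition (c) of Lemma~\ref{el_symb_lemma} (with $\alpha=0$) gives $|\psi_h|\le C$, while condition (b) confines the support to $\mathcal{C}^{M,K}_h$, on which $\langle\xi\rangle_M \sim 2^h$ by \eqref{equiv:h}. Hence
\[
2^{shp}\|\widehat{u_h}\|_{L^p}^p \le C\int_{\mathcal{C}^{M,K}_h}\langle\xi\rangle_M^{sp}|\widehat{u}(\xi)|^p\, d\xi,
\]
and the finite overlap property \eqref{overlap_cond} lets me sum in $h$ to recover $\|u\|_{\mathcal{FL}^p_{s,M}}^p$ up to a constant. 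Combining the three steps yields \eqref{norma_T1}. The $p=+\infty$ case is handled by the same argument with the obvious replacement of $\ell^p$-type sums by suprema.

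\textbf{Main obstacle.} The only nontrivial point is the spectral localization in the first step: verifying that, for $\ell\le h-N_0$ with $N_0$ large enough, the sum $\mathcal{C}^{M,K}_\ell+\mathcal{C}^{M,K}_h$ remains contained in a dilated shell of the same index $h$. Once this geometric fact is in place, the remaining ingredients (Proposition~\ref{prop_2}(i), Young's inequality, and the equivalence $\langle\xi\rangle_M\sim 2^h$ on $\mathcal{C}^{M,K}_h$) feed into the estimate in a straightforward way and the parameter $s$ plays no restrictive role, so the bound holds for every $s\in\mathbb{R}$.
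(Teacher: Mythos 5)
Your proof is correct and follows essentially the same route as the paper: the same support localization of $\widehat{d_{h,\ell}u_h}$, the same appeal to Proposition~\ref{prop_2}(i), Young's inequality to bound $\|\widehat{v_h}\|_{L^p}$ by $\|\widehat{d_h}\|_{L^1}\|\widehat{u_h}\|_{L^p}$ using $0\le\sum_\ell\varphi_\ell\le 1$, and then recovering $\|u\|_{\mathcal FL^p_{s,M}}$ from the dyadic blocks. You merely make explicit (via conditions (b)--(c) of Lemma~\ref{el_symb_lemma} and the finite overlap) the final step that the paper folds into a citation of Proposition~\ref{prop_1}.
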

\begin{proof}
Taking $N_0>0$ sufficiently large, we find a suitable $T>1$ such that
\begin{equation*}
{\rm supp}\,\widehat{d_{h,\ell}u_h}\subseteq\mathcal C^{M,T}_h\,,\quad\mbox{for}\,\,-1\le\ell\le h-N_0\,\,\,\mbox{and}\,\,\,h\ge N_0-1\,.
\end{equation*}
Then in view of Proposition \ref{prop_2} (i), for every $s\in\mathbb R$ a positive constant $C=C_{s,p}$ exists such that
\[
\Vert T_1u\Vert_{\mathcal FL^p_{s,M}}^p\le C\sum\limits_{h\ge N_0-1}2^{shp}\left\Vert\sum\limits_{\ell=-1}^{h-N_0}\widehat{d_{h,\ell}u_h}\right\Vert_{L^p}^p\,;
\]
on the other hand
\[
\sum\limits_{\ell=-1}^{h-N_0}\widehat{d_{h,\ell}u_h}=(2\pi)^{-n}\sum\limits_{\ell=-1}^{h-N_0}\widehat{d_{h,\ell}}\ast\widehat{u_h}=(2\pi)^{-n}\sum\limits_{\ell=-1}^{h-N_0}\varphi_{\ell}\widehat{d_{h}}\ast\widehat{u_h}
\]
hence Young's inequality yields
\[
\left\Vert\sum\limits_{\ell=-1}^{h-N_0}\widehat{d_{h,\ell}u_h}\right\Vert_{L^p}\le(2\pi)^{-n}\left\Vert\sum\limits_{\ell=-1}^{h-N_0}\varphi_{\ell}\widehat{d_{h}}\right\Vert_{L^1}\Vert\widehat u_h\Vert_{L^p}
\]
and, in view of \eqref{unity_cond},
\[
\left\Vert\sum\limits_{\ell=-1}^{h-N_0}\varphi_{\ell}\widehat{d_{h}}\right\Vert_{L^1}=\int\sum\limits_{\ell=-1}^{h-N_0}\varphi_{\ell}(\xi)\vert\widehat{d_h}(\xi)\vert d\xi\le\int\vert\widehat{d_h}(\xi)\vert d\xi=\Vert\widehat{d_h}\Vert_{L^1}\,.
\]
Combining the preceding estimates and because of Lemma \ref{el_symb_lemma} and Proposition \ref{prop_1} we get
\begin{equation*}\label{T1}
\Vert T_1u\Vert_{\mathcal FL^p_{s,M}}^p\!\!\le C\!\left(\sup\limits_{h\ge -1}\Vert d_h\Vert_{\mathcal FL^1}\right)^p\!\!\!\!\!\sum\limits_{h\ge N_0-1}2^{shp}\vert\widehat{u_h}\vert_{L^p}^p\!\le C\!\left(\sup\limits_{h\ge -1}\Vert d_h\Vert_{\mathcal FL^1}\!\right)^p\!\!\Vert u\Vert_{\mathcal FL^p_{s,M}}^p\,.
\end{equation*}
This ends the proof of lemma.
\end{proof}
\begin{lemma}\label{T2_lemma}
	For all $s>(\delta-1)\left(r-\frac{n}{\mu_\ast q}\right)$, $T_2$ extends to a linear bounded operator
	\begin{equation}\label{T2_bdd}
	T_2:\mathcal FL^p_{s,M}\rightarrow\mathcal FL^p_{s+(1-\delta)\left(r-\frac{n}{\mu_\ast q}\right),M}
	\end{equation}
	and there exists a positive constant $C=C_{N_0,p,r,s}$ such that
	\begin{equation}\label{norma_T2}
	\Vert T_2\Vert_{\mathcal FL^p_{s,M}\rightarrow\mathcal FL^p_{s+(1-\delta)\left(r-\frac{n}{\mu_\ast q}\right)}}\le C\sup\limits_{h\ge -1}2^{-\delta\left(r-\frac{n}{\mu_\ast q}\right)h}\Vert d_h\Vert_{\mathcal FL^p_{r,M}}\,.
	\end{equation}
\end{lemma}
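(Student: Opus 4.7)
The first step is to identify the localization of $v_h(x):=\sum_{\ell=\ell_h}^{h+N_0-1}d_{h,\ell}(x)u_h(x)$ in the frequency space. Since $\widehat{d_{h,\ell}}=\varphi_\ell\widehat{d_h}$ is supported in $\mathcal C^{M,K}_\ell$ and $\widehat{u_h}$ is supported in $\mathcal C^{M,K}_h$, the Fourier transform of $d_{h,\ell}u_h$ is a convolution supported in $\mathcal C^{M,K}_\ell+\mathcal C^{M,K}_h$. The $M$--sub--additivity \eqref{sub-add}, together with $\ell\le h+N_0-1$, implies $\mathrm{supp}\,\widehat{v_h}\subseteq\mathcal B^{M,K'}_h$ for a suitable $K'=K'(K,N_0)$. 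Hence $\{v_h\}$ satisfies condition (a') of Proposition \ref{prop_2}~(ii), and the latter will be the working tool of the proof, provided that the target regularity $t:=s+(1-\delta)\bigl(r-\frac{n}{\mu_\ast q}\bigr)$ is strictly positive: this is precisely the assumption $s>(\delta-1)\bigl(r-\frac{n}{\mu_\ast q}\bigr)$.

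Next I would estimate $\|\widehat{v_h}\|_{L^p}$. Writing $\widehat{v_h}=(2\pi)^{-n}\sum_{\ell=\ell_h}^{h+N_0-1}\widehat{d_{h,\ell}}\ast\widehat{u_h}$ and using Young's inequality, the task reduces to bounding $\|\widehat{d_{h,\ell}}\|_{L^1}$. Since $\widehat{d_{h,\ell}}$ is supported in $\mathcal C^{M,K}_\ell$, Hölder's inequality with exponents $(q,p)$ gives
\begin{equation*}
\|\widehat{d_{h,\ell}}\|_{L^1}\le\Bigl(\int_{\mathcal C^{M,K}_\ell}\langle\xi\rangle_M^{-rq}\,d\xi\Bigr)^{1/q}\|d_{h,\ell}\|_{\mathcal FL^p_{r,M}}\le C\,2^{\ell(n/(\mu_\ast q)-r)}\|d_h\|_{\mathcal FL^p_{r,M}},
\end{equation*}
where the volume estimate $|\mathcal C^{M,K}_\ell|\lesssim 2^{\ell\sum_j1/\mu_j}\le 2^{\ell n/\mu_\ast}$ (coming from the $M$--homogeneous change of variables $\xi_j=2^{\ell/\mu_j}\eta_j$) has been used, together with $\langle\xi\rangle_M\sim 2^\ell$ on $\mathcal C^{M,K}_\ell$ (cf.\ \eqref{equiv:h}) and the pointwise bound $0\le\varphi_\ell\le 1$ which yields $\|d_{h,\ell}\|_{\mathcal FL^p_{r,M}}\le\|d_h\|_{\mathcal FL^p_{r,M}}$. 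Observe that the integrability $r>\frac{n}{\mu_\ast q}$ is what guarantees the finiteness of the above integral (and its decay in $\ell$). The same Hölder argument should be rephrased for the endpoint cases $p=1$ and $p=\infty$ in the obvious way.

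Since $r>\frac{n}{\mu_\ast q}$, the factor $2^{\ell(n/(\mu_\ast q)-r)}$ is decreasing in $\ell$, so the sum over $\ell\in[\ell_h,h+N_0-1]$ (which has at most $2N_0$ terms) is controlled by its value at $\ell=\ell_h\sim h$, yielding $\|\widehat{v_h}\|_{L^p}\le C_{N_0}\,2^{h(n/(\mu_\ast q)-r)}\|d_h\|_{\mathcal FL^p_{r,M}}\|\widehat{u_h}\|_{L^p}$ (the finitely many small values of $h$ for which $\ell_h=-1$ produce only a bounded perturbation). Setting $A:=\sup_{h\ge -1}2^{-h\delta(r-n/(\mu_\ast q))}\|d_h\|_{\mathcal FL^p_{r,M}}$, we get $\|d_h\|_{\mathcal FL^p_{r,M}}\cdot 2^{h(n/(\mu_\ast q)-r)}\le A\,2^{-h(1-\delta)(r-n/(\mu_\ast q))}$. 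Plugging this into Proposition \ref{prop_2}~(ii),
\begin{equation*}
\|T_2u\|_{\mathcal FL^p_{t,M}}^p\le C\sum_{h\ge -1}2^{thp}\|\widehat{v_h}\|_{L^p}^p\le C'A^p\sum_{h\ge -1}2^{hp[t-(1-\delta)(r-n/(\mu_\ast q))]}\|\widehat{u_h}\|_{L^p}^p,
\end{equation*}
and since the bracketed exponent equals $s$, Proposition \ref{prop_1} finishes the bound, delivering both \eqref{T2_bdd} and \eqref{norma_T2}.

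The main obstacle is the correct book-keeping of the two competing scales: the gain $2^{\ell(n/(\mu_\ast q)-r)}$ coming from the measure of $\mathcal C^{M,K}_\ell$ weighted by $\langle\xi\rangle_M^{-r}$, and the loss $2^{h\delta(r-n/(\mu_\ast q))}$ built into the symbol norms of the elementary pieces $d_h$. The range of $\ell$ in $T_2$ is precisely what forces both scales to coincide at level $h$, so that they combine into the net gain $2^{-h(1-\delta)(r-n/(\mu_\ast q))}$; the positivity of $t$ ensured by the hypothesis on $s$ is then needed to legitimately invoke Proposition \ref{prop_2}~(ii), whose conclusion only holds for positive regularity indices on the frequency-localized balls $\mathcal B^{M,K'}_h$.
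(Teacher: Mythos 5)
Your proof is correct and follows essentially the same route as the paper: identify the frequency localization of $T_2u$ in the balls $\mathcal B^{M,K'}_h$, invoke Proposition \ref{prop_2}(ii) (hence the positivity constraint $s>(\delta-1)(r-\tfrac{n}{\mu_\ast q})$), then combine Young and H\"older with the volume estimate for $\mathcal C^{M,K}_\ell$ to extract the factor $2^{-h(1-\delta)(r-n/(\mu_\ast q))}$. The only technical difference from the paper's computation is that you apply Young's inequality with the $L^1$ norm on $\widehat{d_{h,\ell}}$ and then H\"older on that factor over its dyadic shell, whereas the paper puts $L^1$ on $\widehat{u_h}$ and converts back to $L^p$ by a Bernstein-type estimate while summing $\sum_\ell 2^{r\ell p}\|\widehat{d_{h,\ell}}\|_{L^p}^p\lesssim\|d_h\|_{\mathcal FL^p_{r,M}}^p$; the two variants are dual and lead to the same bound.
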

\begin{proof}
Taking $N_0>0$ sufficiently large, we find a suitable $T>1$ such that
\begin{equation}\label{suppT2}
{\rm supp}\,\widehat{d_{h,\ell}u_h}\subseteq\{\xi\,:\,\,\vert\xi\vert_M\le T2^{h+1}\}\,,\quad\mbox{for}\,\,\ell_h\le\ell\le h+N_0-1\,,\,\,\,h\ge -1
\end{equation}
and where $\ell_h:=\max\{-1,h-N_0+1\}$. From Proposition \ref{prop_2} (ii), for $s>(\delta-1)\left(r-\frac{n}{\mu_\ast q}\right)$ we get
\[
\Vert T_2u\Vert_{\mathcal FL^p_{s+(1-\delta)\left(r-\frac{n}{\mu_\ast q}\right),M}}^p\le C\sum\limits_{h\ge -1}2^{\left(s+(1-\delta)\left(r-\frac{n}{\mu_\ast q}\right)\right)hp}\left\Vert\sum\limits_{\ell=\ell_h}^{h+N_0-1}\widehat{d_{h,\ell}u_h}\right\Vert_{L^p}^p\,;
\]
and again from Young's inequality
\[
\left\Vert\sum\limits_{\ell=\ell_h}^{h+N_0-1}\widehat{d_{h,\ell}u_h}\right\Vert_{L^p}\!\!\!\le(2\pi)^{-n}\sum\limits_{\ell=\ell_h}^{h+N_0-1}\Vert\widehat{d_{h,\ell}}\ast\widehat{u_h}\Vert_{L^p}\le(2\pi)^{-n}\sum\limits_{\ell=\ell_h}^{h+N_0-1}\Vert\widehat{d_{h,\ell}}\Vert_{L^p}\Vert\widehat{u_h}\Vert_{L^1}\,;
\]
thus, since the number of indices $\ell$ such that $\ell_h\le\ell\le h+N_0-1$ is bounded above by a positive number independent of $h$ one has
\[
\begin{split}
\Vert T_2u &\Vert_{\mathcal FL^p_{s+(1-\delta)\left(r-\frac{n}{\mu_\ast q}\right),M}}^p\le C\sum\limits_{h\ge -1}2^{\left(s+(1-\delta)\left(r-\frac{n}{\mu_\ast q}\right)\right)hp}\left(\sum\limits_{\ell=\ell_h}^{h+N_0-1}\Vert\widehat{d_{h,\ell}}\Vert_{L^p}\Vert\widehat{u_h}\Vert_{L^1}\right)^p\\
&\le C_{N_0,p}\sum\limits_{h\ge -1}2^{\left(s+(1-\delta)\left(r-\frac{n}{\mu_\ast q}\right)\right)hp}\Vert\widehat{u_h}\Vert_{L^1}^p\sum\limits_{\ell=\ell_h}^{h+N_0-1}\Vert\widehat{d_{h,\ell}}\Vert_{L^p}^p\\
&=C_{N_0,p}\sum\limits_{h\ge -1}2^{shp}\,2^{-\frac{n}{\mu_\ast q}hp}\Vert\widehat{u_h}\Vert_{L^1}^p\,2^{rhp}2^{-\delta\left(r-\frac{n}{\mu_\ast q}\right)hp}\sum\limits_{\ell=\ell_h}^{h+N_0-1}\Vert\widehat{d_{h,\ell}}\Vert_{L^p}^p\,.
\end{split}
\]
Notice also that H\"older's inequality yields
\[
\Vert\widehat{u_h}\Vert_{L^1}=\int_{\mathcal C^{M,K}_h}\vert\widehat{u_h}(\xi)\vert d\xi\le\Vert\widehat{u_h}\Vert_{L^p}\left(\int_{\mathcal C^{M,K}_h }d\xi\right)^{1/q}\le C\Vert\widehat{u_h}\Vert_{L^p} 2^{\frac{nh}{\mu_\ast q}}\,,
\]
hence
\[
2^{-\frac{nhp}{\mu_\ast q}}\Vert\widehat{u_h}\Vert_{L^1}^p\le C\Vert\widehat{u_h}\Vert_{L^p}^p\,.
\]
Moreover, for a suitable constant $C_{N_0}>0$ depending only on $N_0$,
\[
2^h\le C_{N_0}2^{\ell}\,,\quad\mbox{for}\,\,\, \ell_h\le\ell\le h+N_0-1\,.
\]
Hence we get
\[
\begin{split}
2^{rhp} &2^{-\delta\left(r-\frac{n}{m_ast q}\right)hp}\sum\limits_{\ell=\ell_h}^{h+N_0-1}\Vert\widehat{d}_{h,\ell}\Vert_{L^p}^p\le C_{N_0,r,p}2^{-\delta\left(r-\frac{n}{m_ast q}\right)hp}\sum\limits_{\ell=\ell_h}^{h+N_0-1}2^{r\ell p}\Vert\widehat{d}_{h,\ell}\Vert_{L^p}^p\\
&\le\widetilde{C}_{N_0,r,p}2^{-\delta\left(r-\frac{n}{m_ast q}\right)hp}\Vert d_h\Vert_{\mathcal FL^p_{r,M}}^p\le\widetilde{C}_{N_0,r,p}H^p\,,
\end{split}
\]
where
\begin{equation}\label{H}
H:=\sup\limits_{h\ge -1}2^{-\delta\left(r-\frac{n}{m_\ast q}\right)h}\Vert d_h\Vert_{\mathcal FL^p_{r,M}}\,,
\end{equation}
and, in view of Proposition \ref{prop_1},
\[
\Vert T_2u\Vert^p_{\mathcal FL^p_{s+(1-\delta)\left(r-\frac{n}{\mu_\ast q}\right),M}}\le\widetilde C_{N_0,r,p}H^p\sum\limits_{h=-1}^{+\infty}2^{shp}\Vert\widehat u_h\Vert_{L^p}^p\le\widehat C_{N_0,p,r,s}H^p\Vert u\Vert^p_{\mathcal FL^p_{s,M}}\,.
\]
This ends the proof of Lemma \ref{T2_lemma}.
\end{proof}
\begin{remark}\label{cons_lemma}
{\rm Since for $0\le\delta\le 1$ and $r>\frac{n}{\mu_\ast q}$ we have $s+(1-\delta)\left(r-\frac{n}{\mu_\ast q}\right)\ge s$, as an immediate consequence of Lemma \ref{T2_lemma}, we get the boundedness of $T_2$ as a linear operator $T_2:\mathcal FL^p_{s,M}\rightarrow\mathcal FL^p_{s,M}$.}
\end{remark}
%%%%%%%%%%%%%%%%%%%%%%%%%%%%%%%%%%%%%%%%%%%%%%%%%%%%%%%%%%%%%%%%%%%%%%%%%%
\begin{lemma}\label{T3_lemma}
	For all $s<r$, $T_3$ extends to a linear bounded operator
	\begin{equation}\label{T3_bdd_1}
	T_3:\mathcal FL^p_{s+(\delta-1)\left(r-\frac{n}{\mu_\ast q}\right),M}\rightarrow\mathcal FL^p_{s,M}
	\end{equation}
	and there exists a positive constant $C=C_{s,p,r}$ such that
	\begin{equation}\label{norma_T3_1}
	\Vert T_3\Vert_{\mathcal FL^p_{s+(\delta-1)\left(r-\frac{n}{\mu_\ast q}\right),M}\rightarrow\mathcal FL^p_{s,M}}\le C\sup\limits_{h\ge -1}2^{-\delta\left(r-\frac{n}{\mu_\ast q}\right)h}\Vert d_h\Vert_{\mathcal FL^p_{r,M}}\,.
	\end{equation}
	Moreover for $0\le\delta<1$ and arbitrary $\varepsilon>0$, $T_3$ extends to a linear bounded operator
	\begin{equation}\label{T3_bdd_2}
	T_3:\mathcal FL^p_{\varepsilon+\delta r-(\delta-1)\frac{n}{\mu_\ast q},M}\rightarrow\mathcal FL^p_{r,M}
	\end{equation}
	and there exists a positive constant $C=C_{r,p,\varepsilon}$ such that:
	\begin{equation}\label{norma_T3_2}
	\Vert T_3\Vert_{\mathcal FL^p_{\varepsilon+\delta r-(\delta-1)\frac{n}{\mu_\ast q},M}\rightarrow\mathcal FL^p_{r,M}}\le C\sup\limits_{h\ge -1}2^{-\delta\left(r-\frac{n}{\mu_\ast q}\right)h}\Vert d_h\Vert_{\mathcal FL^p_{r,M}}\,.
	\end{equation}
\end{lemma}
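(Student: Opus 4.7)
The strategy is to apply Proposition \ref{prop_2} (i) to the decomposition $T_3u=\sum_{\ell\ge N_0-1}w_\ell$ with $w_\ell:=\sum_{h=-1}^{\ell-N_0}d_{h,\ell}u_h$, mimicking the treatment of $T_1$ and $T_2$. For $N_0$ large enough the condition $h\le\ell-N_0$ makes the $\ell$-scale dominate in $\widehat{d_{h,\ell}}\ast\widehat{u_h}$, so $\text{supp}\,\widehat{w_\ell}\subseteq\mathcal C^{M,T}_\ell$ for a suitable $T>1$, and hence
\[
\Vert T_3u\Vert_{\mathcal FL^p_{s,M}}^p\le C\sum_{\ell\ge N_0-1}2^{s\ell p}\Bigl(\sum_{h=-1}^{\ell-N_0}\Vert\widehat{d_{h,\ell}}\ast\widehat{u_h}\Vert_{L^p}\Bigr)^p.
\]
I would control the convolution through Young's inequality $L^p\ast L^1\to L^p$. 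Using $\langle\xi\rangle_M\sim2^\ell$ on $\text{supp}\,\varphi_\ell$ to pull out the weight, one obtains $\Vert\widehat{d_{h,\ell}}\Vert_{L^p}\le C2^{-r\ell}c_{h,\ell}$ with $c_{h,\ell}:=\Vert\varphi_\ell\langle\cdot\rangle_M^r\widehat{d_h}\Vert_{L^p}$, while H\"older on the compact support of $\widehat{u_h}$ yields $\Vert\widehat{u_h}\Vert_{L^1}\le C2^{hn/(\mu_\ast q)}\Vert\widehat{u_h}\Vert_{L^p}$, exactly as in the proof of Lemma \ref{T2_lemma}. Write $\alpha:=r-n/(\mu_\ast q)$ and let $H$ be as in \eqref{H}.

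For Part 1 ($s<r$), I would insert the crude pointwise bound $c_{h,\ell}\le\Vert d_h\Vert_{\mathcal FL^p_{r,M}}\le H2^{h\delta\alpha}$, obtaining
\[
\Vert T_3u\Vert_{\mathcal FL^p_{s,M}}^p\le CH^p\sum_\ell 2^{(s-r)\ell p}G(\ell)^p,\qquad G(\ell):=\sum_{h\le\ell-N_0}2^{h(\delta\alpha+n/(\mu_\ast q))}\Vert\widehat{u_h}\Vert_{L^p}.
\]
Parametrizing $b_h:=2^{(s+(\delta-1)\alpha)h}\Vert\widehat{u_h}\Vert_{L^p}$ (so $\Vert b\Vert_{\ell^p}\simeq\Vert u\Vert_{\mathcal FL^p_{s+(\delta-1)\alpha,M}}$ by Proposition \ref{prop_1}), a quick calculation rewrites $G(\ell)=2^{(r-s)\ell}(g\ast b)(\ell)$ with the discrete kernel $g(k):=2^{-(r-s)k}\chi_{k\ge N_0}$, which lies in $\ell^1(\mathbb Z)$ precisely because $s<r$. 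Discrete Young's inequality then yields $\sum_\ell 2^{(s-r)\ell p}G(\ell)^p=\Vert g\ast b\Vert_{\ell^p}^p\le C_{s,r}\Vert b\Vert_{\ell^p}^p$, delivering \eqref{T3_bdd_1}--\eqref{norma_T3_1}.

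Part 2 ($s=r$, $0\le\delta<1$) is the main obstacle: the kernel $g$ above loses its $\ell^1$-summability at $s=r$. I would instead exploit a square-function type control of $c_{h,\ell}$: from $0\le\varphi_\ell\le 1$ and $\sum_\ell\varphi_\ell=1$ one has $\sum_\ell|\varphi_\ell|^p\le 1$, so Fubini gives $\sum_\ell c_{h,\ell}^p\le\Vert d_h\Vert_{\mathcal FL^p_{r,M}}^p\le H^p2^{h\delta\alpha p}$. Cancelling $2^{r\ell p}$ against $2^{-r\ell p}$ and setting $P(h,\ell):=2^{hn/(\mu_\ast q)}c_{h,\ell}\Vert\widehat{u_h}\Vert_{L^p}\chi_{h\le\ell-N_0}$, Minkowski's inequality for the $\ell^p_\ell$-norm of sums indexed by $h$ produces
\[
\Vert T_3u\Vert_{\mathcal FL^p_{r,M}}^p\le C\Bigl\Vert\sum_h P(h,\cdot)\Bigr\Vert_{\ell^p_\ell}^p\le CH^p\Bigl(\sum_h 2^{h(\delta\alpha+n/(\mu_\ast q))}\Vert\widehat{u_h}\Vert_{L^p}\Bigr)^p.
\]
A final H\"older in $h$, pairing $2^{-\varepsilon h}\in\ell^q$ against $2^{(\delta\alpha+n/(\mu_\ast q)+\varepsilon)h}\Vert\widehat{u_h}\Vert_{L^p}\in\ell^p$, bounds the last sum by $C_\varepsilon\Vert u\Vert_{\mathcal FL^p_{\delta\alpha+n/(\mu_\ast q)+\varepsilon,M}}$; the identity $\delta\alpha+n/(\mu_\ast q)+\varepsilon=\varepsilon+\delta r-(\delta-1)n/(\mu_\ast q)$ then identifies the regularity index in \eqref{T3_bdd_2}--\eqref{norma_T3_2}. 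The $\varepsilon$-loss is intrinsic: it compensates for the logarithmic failure of $g$ to be $\ell^1$-summable at the borderline $s=r$, and the square-function estimate on $c_{h,\ell}$ is what lets Minkowski turn the divergent $\ell$-sum into a convergent $h$-sum.
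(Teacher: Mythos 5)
Your proof is correct and follows essentially the same route as the paper's: Young's inequality on the convolution $\widehat{d_{h,\ell}}\ast\widehat{u_h}$, the crown volume estimate $\Vert\widehat{u_h}\Vert_{L^1}\le C2^{hn/(\mu_\ast q)}\Vert\widehat{u_h}\Vert_{L^p}$, Proposition \ref{prop_2}, and (for Part 1) identification of a discrete convolution with kernel in $\ell^1$ so that discrete Young applies. The only divergence is cosmetic, in Part 2: the paper first inserts $2^{-\varepsilon h}2^{\varepsilon h}$ and applies H\"older in the $h$-index with exponents $(q,p)$, then interchanges $\sum_\ell\sum_h$ and exploits $\sum_\ell 2^{r\ell p}\Vert\widehat{d_{h,\ell}}\Vert_{L^p}^p\le C\Vert d_h\Vert_{\mathcal FL^p_{r,M}}^p$; you instead invoke Minkowski's inequality in $\ell^p_\ell$ first, then the same square-function bound $\sum_\ell c_{h,\ell}^p\le\Vert d_h\Vert_{\mathcal FL^p_{r,M}}^p$, and defer the $\varepsilon$-H\"older to the very last step. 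Both orderings use exactly the same two analytical ingredients, so this is a reorganization rather than a different proof.
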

\begin{proof}
Let us prove the first statement of Lemma \ref{T3_lemma}. For $N_0>0$ sufficiently large we have
\[
{\rm supp}\,\widehat{d_{h,\ell}u_h}\subseteq\mathcal C^{T}_\ell\,,\quad\mbox{for}\,\,\ell\ge N_0-1\,,\,\,-1\le h\le\ell-N_0\,.
\]
Hence Proposition \ref{prop_2} and Young's inequality imply, for finite $p\ge 1$,
\begin{equation}\label{T3_eq:1}
\begin{split}
\Vert T_3u\Vert_{\mathcal FL^p_M}^p&\le C\!\!\!\sum\limits_{\ell=N_0-1}^{+\infty}2^{s\ell p}\left\Vert\sum\limits_{h=-1}^{\ell-N_0}\widehat{d_{h,\ell}u_h}\right\Vert_{L^p}^p\!\!\!=C\!\!\!\sum\limits_{\ell=N_0-1}^{+\infty}2^{s\ell p}\left\Vert\sum\limits_{h=-1}^{\ell-N_0}\widehat{d_{h,\ell}}\ast\widehat{u_h}\right\Vert_{L^p}^p\\
&\le C\!\!\!\sum\limits_{\ell=N_0-1}^{+\infty}2^{s\ell p}\left(\sum\limits_{h=-1}^{\ell-N_0}\Vert\widehat{d_{h,\ell}}\Vert_{L^p}\Vert\widehat{u_h}\Vert_{L^1}\right)^p \\
&=C\!\!\!\sum\limits_{\ell=N_0-1}^{+\infty}\!\!\!\left(\sum\limits_{h=-1}^{\ell-N_0}2^{(s-r)\ell}2^{r\ell}\Vert\widehat{d_{h,\ell}}\Vert_{L^p}\Vert\widehat{u_h}\Vert_{L^1}\right)^p
\end{split}
\end{equation}
(with obvious modifications in the case of $p=+\infty$); on the other hand, condition (a) and Proposition \ref{prop_1} yield
\[
\sum\limits_{\ell=-1}^{+\infty}2^{r\ell p}\Vert\widehat{d}_{h,\ell}\Vert_{L^p}^p\le H 2^{\delta\left(r-\frac{n}{\mu_\ast q}\right) ph}\,,\quad\mbox{for}\,\,h\ge -1\,,
\]
hence
\begin{equation}\label{T3_eq:2}
2^{r\ell}\Vert\widehat{d}_{h,\ell}\Vert_{L^p}\le H 2^{\delta\left(r-\frac{n}{\mu_\ast q}\right)h}\,,\quad\mbox{for}\,\,\ell\ge -1\,,
\end{equation}
where $H$ is the constant in \eqref{H}.

Combining \eqref{T3_eq:1}, \eqref{T3_eq:2} and using Bernstein's inequality
\begin{equation}\label{B}
2^{-\frac{n}{\mu_\ast q}h}\Vert\widehat{u}_h\Vert_{L^1}\le C\Vert\widehat{u}_h\Vert_{L^p}
\end{equation}
we get
\begin{equation*}
\begin{split}
&\Vert T_3u\Vert_{\mathcal FL^p_M}\le C H^p\sum\limits_{\ell=N_0-1}^{+\infty}\left(\sum\limits_{h=-1}^{\ell-N_0}2^{(s-r)\ell}2^{\delta\left(r-\frac{n}{\mu_\ast q}\right)h}\Vert\widehat{u_h}\Vert_{L^1}\right)^p\\
&= C H^p\sum\limits_{\ell=N_0-1}^{+\infty}\left(\sum\limits_{h=-1}^{\ell-N_0}2^{(s-r)(\ell-h)}2^{(s-r)h}2^{\delta\left(r-\frac{n}{\mu_\ast q}\right)h}2^{\frac{n}{\mu_\ast q}h}2^{-\frac{n}{\mu_\ast q}h}\Vert\widehat{u_h}\Vert_{L^1}\right)^p\\
&= C H^p\sum\limits_{\ell=N_0-1}^{+\infty}\left(\sum\limits_{h=-1}^{\ell-N_0}2^{(s-r)(\ell-h)}2^{\left(s+(\delta-1)\left(r-\frac{n}{\mu_\ast q}\right)\right)h}2^{-\frac{n}{\mu_\ast q}h}\Vert\widehat{u_h}\Vert_{L^1}\right)^p\\
&\le C H^p\sum\limits_{\ell=N_0-1}^{+\infty}\left(\sum\limits_{h=-1}^{\ell-N_0}2^{(s-r)(\ell-h)}2^{\left(s+(\delta-1)\left(r-\frac{n}{\mu_\ast q}\right)\right)h}\Vert\widehat{u_h}\Vert_{L^p}\right)^p\,.
\end{split}
\end{equation*}
The last quantity above is the general term of the discrete convolution of the sequences
\[
b:=\{2^{(s-r)k}\}_{k\ge N_0-1}\,,\qquad c:=\{2^{\left(s+(\delta-1)\left(r-\frac{n}{\mu_\ast q}\right)\right)k}\Vert\widehat{u_k}\Vert_{L^p}\}_{k\ge N_0-1}\,.
\]
Since $b\in\ell^1$, for $s<r$, discrete Young's inequality and Proposition \ref{prop_1} yield
\begin{equation*}
\begin{split}
\Vert T_3u\Vert_{\mathcal FL^p_M}^p &\le CH^p\Vert b\Vert_{\ell^1}\Vert c\Vert_{\ell^p}\le \tilde CH^p\sum\limits_{\ell\ge -1}2^{\left(s+(\delta-1)\left(r-\frac{n}{\mu_\ast q}\right)\right)\ell p}\Vert\widehat{u_\ell}\Vert_{L^p}^p\\
&\le\hat CH^p\Vert u\Vert_{\mathcal FL^p_{s+(\delta-1)\left(r-\frac{n}{\mu_\ast q}\right),M}}\,.
\end{split}
\end{equation*}
This proves the first continuity property \eqref{T3_bdd_1} together with estimate \eqref{norma_T3_1}.

\medskip
Let us now prove the second statement of Lemma \ref{T3_lemma}, so we assume that $\delta\in[0,1[$. For an arbitrary $\varepsilon>0$ similar arguments to those used above give
\begin{equation}\label{T3_eq:3}
\begin{split}
\Vert T_3 u\Vert_{\mathcal FL^p_{r,M}}^p&\le C\sum\limits_{\ell=N_0-1}^{+\infty}2^{r\ell p}\left\Vert\sum\limits_{h=-1}^{\ell-N_0}\widehat{d_{h,\ell}}\ast\widehat{u_h}\right\Vert_{L^p}^p\\
&\le C\sum\limits_{\ell=N_0-1}^{+\infty}2^{r\ell p}\left(\sum\limits_{h=-1}^{\ell-N_0}\Vert\widehat{d_{h,\ell}}\Vert_{L^p}\Vert\widehat{u}_h\Vert_{L^1}\right)^p\\
&=C\sum\limits_{\ell=N_0-1}^{+\infty}\left(\sum\limits_{h=-1}^{\ell-N_0}2^{-\varepsilon h}2^{\varepsilon h}2^{r\ell}\Vert\widehat{d_{h,\ell}}\Vert_{L^p}\Vert\widehat{u}_h\Vert_{L^1}\right)^p\\
&\le C\sum\limits_{\ell=N_0-1}^{+\infty}\left(\sum\limits_{h=-1}^{\ell-N_0}2^{-\varepsilon h q}\right)^{p/q}\left(\sum\limits_{h=-1}^{\ell-N_0}2^{\varepsilon h p}2^{r\ell p}\Vert\widehat{d_{h,\ell}}\Vert_{L^p}^p\Vert\widehat{u}_h\Vert_{L^1}^p\right)\\
&\le C_{\varepsilon,p}\sum\limits_{\ell=N_0-1}^{+\infty}\sum\limits_{h=-1}^{\ell-N_0}2^{\varepsilon h p}2^{r\ell p}\Vert\widehat{d_{h,\ell}}\Vert_{L^p}^p\Vert\widehat{u}_h\Vert_{L^1}^p\\
&=C_{\varepsilon,p}\sum\limits_{h=-1}^{+\infty}2^{\varepsilon h p}\sum\limits_{\ell\ge h+N_0}2^{r\ell p}\Vert\widehat{d_{h,\ell}}\Vert_{L^p}^p\Vert\widehat{u}_h\Vert_{L^1}^p\,,
\end{split}
\end{equation}
where in the last quantity above the summation index order was interchanged.\\
Again from condition (a) and Proposition \ref{prop_1}
\[
\sum\limits_{\ell\ge h+N_0}2^{r\ell p}\Vert\widehat{d_{h,\ell}}\Vert_{L^p}^p\le C_{r,p}\Vert d_h\Vert_{\mathcal FL^p_{r,M}}\le C_{r,p}H^p2^{\delta\left(r-\frac{n}{\mu_\ast q}\right)hp}\,,
\]
with $H$ defined in \eqref{H}. Using the above to estimate the right-hand side of \eqref{T3_eq:3}, Bernstein's inequality \eqref{B} and Proposition \ref{prop_1} we obtain
\begin{equation*}
\begin{split}
\Vert T_3 &u\Vert_{\mathcal FL^p_{r,M}}^p\le C_{r,\varepsilon,p}H^p\sum\limits_{h=-1}^{+\infty}2^{\varepsilon h p}2^{\delta\left(r-\frac{n}{\mu_\ast q}\right)hp}\Vert\widehat{u}_h\Vert_{L^1}^p\\
&\le C_{r,\varepsilon,p}H^p\sum\limits_{h=-1}^{+\infty}2^{\left(\varepsilon+\delta r-(\delta-1)\frac{n}{\mu_\ast q}\right) h p}\Vert\widehat{u}_h\Vert_{L^p}^p\\
&\le C_{r,p,\varepsilon}H^p\Vert u\Vert_{\mathcal FL^p_{\varepsilon+\delta r-(\delta-1)\frac{n}{\mu_\ast q},M}}^p\,.
\end{split}
\end{equation*}
This completes the proof of the continuity \eqref{T3_bdd_2} together with estimate \eqref{norma_T3_2}.
\end{proof}
\begin{remark}\label{rmk:exponent}
{\rm Let us collect some observations concerning Lemma \ref{T3_lemma}. 
\newline
We first notice that for $s<r$ the boundedness of $T_3$ as a linear operator $T_3:\mathcal FL^p_{s,M}\rightarrow\mathcal FL^p_{s,M}$ follows as an immediate consequence of \eqref{T3_bdd_1}, since $\mathcal FL^p_{s,M}\hookrightarrow\mathcal FL^p_{s+(\delta-1)\left(r-\frac{n}{\mu_\ast q}\right),M}$ for $\delta$ and $r$ under the assumptions of Lemma \ref{T3_lemma}.
\newline
As regards to the second part of Lemma \ref{T3_lemma} (see \eqref{T3_bdd_2}), we notice that the Fourier-Lebesgue esponent $\varepsilon+\delta r-(\delta-1)\frac{n}{\mu_\ast q}$, with any positive $\varepsilon$, is a little more restrictive than the one that should be recovered from the exponent $s+(\delta-1)\left(r-\frac{n}{\mu_\ast q}\right)$, in the first part of the Lemma, in the limiting case as $s\to r$.
\newline
Notice eventually that when $0<\varepsilon<(1-\delta)\left(r-\frac{n}{\mu_\ast q}\right)$ is taken in the second part of the statement of Lemma \ref{T3_lemma}, then $\varepsilon+\delta r-(\delta-1)\frac{n}{\mu_\ast q}<r$. Hence we get the boundedness of $T_3$, as a linear operator $T_3:\mathcal FL^p_{r,M}\rightarrow\mathcal FL^p_{r,M}$, as long as $0\le\delta<1$, as an immediate consequence of the boundedness \eqref{T3_bdd_2}.}
\end{remark}
%%%%%%%%%%%%%%%%%%%%%%%%%%%%%%%%%%%%%%%%%%%%%%%%%%%%%%%%%%%%%%%%%%%%%%%%%%%%%%%%%%%%%%%%%%%%%%%%%%%%
\section{Calculus for pseudodifferential operators with smooth symbols}\label{smooth_symb_sct}

In this section we investigate the properties of pseudodifferential operators with  $M$-homogeneous smooth  symbols introduced in Sect. \ref{hsy}, see Definition \ref{sm-sym_k_dfn}.\\
At first notice that, despite  $M-$ weight \eqref{M-h-w} is not smooth in $\mathbb R^n$, for an arbitrary vector $M=(\mu_1,\dots,\mu_n)\in\mathbb R^n_+$, one can always find an {\em equivalent} weight which is also a smooth symbol in the class $S^1_M$.  

More precisely, in view of \cite[Proposition 2.9]{GM-14}, the following proposition holds true.
\begin{proposition}\label{h-w-equiv_prop}
For any  vector $M=(\mu_1,\dots,\mu_n)\in\mathbb R^n_+$ there exists a symbol $\pi=\pi_M(\xi)\in S^1_M$, independent of $x$,which is equivalent to the $M-$ weight \eqref{M-h-w}, in the sense that a positive constant $C$ exists such that
\begin{equation}
\frac{1}{C}\pi_M(\xi)\le\langle\xi\rangle_M\le C\pi_M(\xi)\,,\quad\forall\,\xi\in\mathbb R^n\,.
\end{equation}
\end{proposition}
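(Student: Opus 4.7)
The plan is to construct $\pi_M$ explicitly by ``smoothing'' the $M$-weight via the dyadic partition of unity already introduced in Section~\ref{FL-sp_sct}. Let $\{\varphi_h\}_{h\geq-1}$ be an $M$-homogeneous dyadic partition of unity as in \eqref{part_unity}, with associated dyadic shells $\mathcal C^{M,K}_h$ satisfying the bounded overlap condition \eqref{overlap_cond}. I would set
\[
\pi_M(\xi) \;:=\; \varphi_{-1}(\xi) \;+\; \sum_{h=0}^{+\infty} 2^h\,\varphi_h(\xi)\,,
\]
which at each $\xi$ reduces to a finite sum of at most $2N_0+1$ nonzero terms, and therefore defines a function in $C^\infty(\mathbb R^n)$ independent of $x$.

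Next, I would verify the equivalence $\pi_M\asymp \langle\xi\rangle_M$. Setting $\lambda_h:=2^{\max\{h,0\}}$, by \eqref{equiv:h} one has $\lambda_h\asymp\langle\xi\rangle_M$ on $\supp\varphi_h$ uniformly in $h\geq-1$; since $\varphi_h\geq 0$ and $\sum_h\varphi_h\equiv 1$, writing $\pi_M(\xi)=\sum_h\lambda_h\varphi_h(\xi)$ presents $\pi_M(\xi)$ as a convex-combination-type sum of values $\lambda_h$ all comparable to $\langle\xi\rangle_M$ on the (uniformly bounded) set of active indices, so the upper bound $\pi_M(\xi)\leq C\,\langle\xi\rangle_M$ follows immediately. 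For the lower bound I would invoke $\sum_h\varphi_h(\xi)=1$ to select an active index $h_\ast=h_\ast(\xi)$ with $\varphi_{h_\ast}(\xi)\geq 1/(2N_0+1)$; combining this with $\lambda_{h_\ast}\asymp \langle\xi\rangle_M$ gives $\pi_M(\xi)\geq \lambda_{h_\ast}\varphi_{h_\ast}(\xi)\geq c\,\langle\xi\rangle_M$.

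Finally, I would check the symbolic estimate $\vert \partial^\alpha \pi_M(\xi)\vert\leq C_\alpha\,\langle\xi\rangle_M^{1-\langle\alpha,1/M\rangle}$. Differentiating termwise and using the derivative bound $\vert D^\nu_\xi\varphi_h(\xi)\vert\leq C_\nu\langle\xi\rangle_M^{-\langle\nu,1/M\rangle}$ stated in Remark~\ref{rmk:partition_unity}, together with $\lambda_h\asymp\langle\xi\rangle_M$ on $\supp\varphi_h$, each nonzero summand $\lambda_h\partial^\alpha\varphi_h(\xi)$ is controlled by a multiple of $\langle\xi\rangle_M^{1-\langle\alpha,1/M\rangle}$; summing over the at most $2N_0+1$ active indices provided by \eqref{overlap_cond} yields the required estimate, hence $\pi_M\in S^1_M$.

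I expect the main obstacle to be the lower bound in the equivalence: the upper bound and the derivative estimates are mechanical once \eqref{equiv:h}, the derivative bounds in Remark~\ref{rmk:partition_unity}, and \eqref{overlap_cond} have been invoked, but the lower bound genuinely relies on coupling the partition-of-unity identity with the bounded-overlap property in order to extract a single index $h_\ast$ that carries a definite fraction of the total mass while sitting at the correct dyadic scale.
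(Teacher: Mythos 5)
Your construction would be fine if the dyadic cutoffs $\varphi_h$ from \eqref{part_unity} were smooth for arbitrary $M\in\mathbb R^n_+$, but they are not, and this is precisely the only case that matters: the paper notes immediately after the statement that for integer components one may simply take $\pi_M=\langle\cdot\rangle_M$, because $|\xi|_M^2=\sum_j\xi_j^{2\mu_j}$ is then a polynomial. For non-integer $\mu_j$ the map $\xi_j\mapsto|\xi_j|^{2\mu_j}$ fails to be $C^\infty$ at $\xi_j=0$ (it is smooth there precisely when $2\mu_j$ is a non-negative even integer), the shells $\mathcal C^{M,K}_h$ for $h\ge 0$ all meet the coordinate hyperplanes $\{\xi_j=0\}$, and $\Phi'$ does not vanish identically on them, so $\varphi_h=\Phi(|\xi|_M/2^{h+1})-\Phi(|\xi|_M/2^{h})$ is genuinely non-smooth at points of $\mathcal C^{M,K}_h$ with some $\xi_j=0$. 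Consequently the derivative bound \eqref{der_phi_h} from Remark~\ref{rmk:partition_unity} that you invoke is not available for general $M$, and your $\pi_M=\sum_h 2^{\max\{h,0\}}\varphi_h$ inherits the non-smoothness, so it cannot belong to $S^1_M$. Nor can you fix this by substituting a genuinely smooth $M$-homogeneous dyadic partition, since manufacturing one already requires a smooth equivalent of $|\cdot|_M$ -- exactly the object the proposition is supposed to produce -- so the argument becomes circular.

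The paper does not actually prove the proposition in the present text: it is quoted from \cite{GM-14}, Proposition~2.9. The argument that works for every $M\in\mathbb R^n_+$ is an implicit-function-theorem construction, not a dyadic one. For $\xi\neq 0$ define $\lambda(\xi)>0$ as the unique $t>0$ solving $\sum_j\xi_j^2\,t^{-2/\mu_j}=1$ (note the use of $\xi_j^2$ rather than $|\xi_j|^{2\mu_j}$, so the defining function is a smooth function of $\xi$): strict monotonicity of the left side in $t$ gives existence and uniqueness, the implicit function theorem gives $\lambda\in C^\infty(\mathbb R^n\setminus\{0\})$, the identity $\lambda(t^{1/M}\xi)=t\,\lambda(\xi)$ gives $M$-homogeneity of degree $1$, and equivalence with $|\cdot|_M$ follows by comparing two continuous positive $M$-homogeneous functions on the compact set $\{|\xi|_M=1\}$. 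Differentiating the homogeneity identity shows that $\partial^\alpha\lambda$ is $M$-homogeneous of degree $1-\langle\alpha,1/M\rangle$, which gives the $S^1_M$ estimates away from the origin; patching across $\xi=0$ with a cutoff identically $1$ near the origin produces $\pi_M\in S^1_M$. That implicit-function idea is the ingredient missing from your proposal. (As a minor point, even granting smoothness, your lower bound did not need the extraction of $h_\ast$: since $2^{\max\{h,0\}}\ge c\langle\xi\rangle_M$ on $\mathrm{supp}\,\varphi_h$ uniformly in $h$, summing the non-negative terms $2^{\max\{h,0\}}\varphi_h(\xi)$ against $\sum_h\varphi_h\equiv 1$ already gives $\pi_M(\xi)\ge c\langle\xi\rangle_M$.)
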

In view of the subsequent analysis, it is worth noticing that in the case when the vector $M$ has positive integer components, in Proposition \ref{h-w-equiv_prop} we can take $\pi_M(\xi)=\langle\xi\rangle_M$.

%%%%%%%%%%%%%%%%%%%%%%%%%%%%%%%%%%%%%%%%%%%%%%%%%%%%%%%%%%%%%%%%%%%%%%%%%%%%%%%%%%%%%%%%%%%%%%%%%%%%
\subsection{Symbolic calculus in $S^m_{M,\delta,\kappa}$}\label{symb_calc_sct}
The symbolic calculus can be developed for classes $S^m_{M,\delta,\kappa}$, that is pseudodifferential operators with symbol in $S^m_{M,\delta,\kappa}$ constitute a self-contained sub-algebra of the algebra of operators with symbols in $S^m_{M,\delta}$, for $m\in\mathbb R$, $\kappa>0$ and $0\le\delta<\mu_\ast/\mu^\ast$. The main properties of symbolc calculus are summarized in the following result.
\begin{proposition}\label{symb_calc_prop}
\begin{itemize}
\item[(i)]For $m, m^\prime\in\mathbb R$, $\kappa>0$ and $\delta,\delta^\prime\in[0,1]$, let $a(x,\xi)\in S^{m}_{M,\delta,\kappa}$, $b(x,\xi)\in S^{m^\prime}_{M,\delta^\prime,\kappa}$, $\theta,\nu\in\mathbb Z^n_+$. Then
\begin{equation}\label{diff_prod_symb}
\partial^{\theta}_{\xi}\partial^\nu_x a(x,\xi)\in S^{m-\langle\theta,1/M\rangle+\delta\langle\nu,1/M\rangle}_{M,\delta,\kappa}\,,\quad (ab)(x,\xi)\in S^{m+m^\prime}_{M,\max\{\delta,\delta^\prime\},\kappa}\,.
\end{equation}
\item[(ii)]Let $\{m_j\}_{j=0}^{+\infty}$ be a sequence of real numbers satisfying:
\begin{equation}\label{decreasing}
m_j>m_{j+1}\,,\,\,\, j=0,1,\dots\quad\mbox{and}\quad\lim\limits_{j\to +\infty}m_j=-\infty
\end{equation}
and $\{a_j\}_{j=0}^{+\infty}$ be a sequence of symbols $a_j(x,\xi)\in S^{m_j}_{M,\delta,\kappa}$ for each integer $j\ge 0$. Then there exists a unique (up to a remainder in $S^{-\infty}$) symbol $a(x,\xi)\in S^{m_0}_{M,\delta,\kappa}$ such that
\begin{equation}\label{asympt_exp}
a-\sum\limits_{j<N}a_j\in S^{m_N}_{M,\delta,\kappa}\,,\quad\mbox{for all integers}\,\,\,N>0\,.
\end{equation}
\item[(iii)] Let $a(x,\xi)$ and $b(x,\xi)$ be two symbols as in (i), and assume that $0\le\delta^\prime<\mu_\ast/\mu^\ast$. Then the product $c(x,D):=a(x,D)b(x,D)$ is a pseudodifferential operator with symbol $c(x,\xi)=(a\sharp b)(x,\xi)\in S^{m+m^\prime}_{M,\delta^{\prime\prime},\kappa}$, where $\delta^{\prime\prime}:=\max\{\delta,\delta^\prime\}$; moreover this symbol satisfies
\begin{equation}\label{asympt_prod}
a\sharp b-\sum\limits_{\vert\alpha\vert<N}\frac{(-i)^{\vert\alpha\vert}}{\alpha!}\partial^\alpha_\xi a\,\partial^\alpha_x b\in S^{m+m^\prime-(1/\mu^\ast-\delta^\prime/\mu_\ast)N}_{M,\delta^{\prime\prime},\kappa}\,,\quad\mbox{for all integers}\,\,\,N>0\,.
\end{equation}
\end{itemize}
\end{proposition}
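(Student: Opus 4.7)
My plan is to dispatch parts (i) and (ii) by routine arguments and focus the work on part (iii). For part (i), a direct application of Leibniz's rule to the definition of $S^m_{M,\delta,\kappa}$ suffices. For the derivative statement I re-index $(\alpha,\beta)\mapsto(\alpha+\theta,\beta+\nu)$ in \eqref{sm-sym_k_est_1}-\eqref{sm-sym_k_est_2}, noting that whenever the logarithmic threshold $\langle\beta+\nu,1/M\rangle=\kappa$ is met, the logarithmic bound dominates any equivalent polynomial one. For the product I expand $\partial_\xi^\alpha\partial_x^\beta(ab)$ by Leibniz and estimate each factor separately; the emergence of $\max\{\delta,\delta'\}$ is forced by the elementary inequality $(s-\kappa)_++(t-\kappa)_+\le(s+t-\kappa)_+$, valid for $s,t\ge 0$, which prevents the two log/polynomial deteriorations from adding destructively.

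For part (ii), I use a standard Borel summation. Fix $\chi\in C^\infty_0(\mathbb R^n)$ equal to $1$ near $0$ and define
\[
a(x,\xi):=\sum_{j=0}^{+\infty}\bigl(1-\chi(\xi/t_j)\bigr)\,a_j(x,\xi),
\]
with $t_j\to+\infty$ chosen so fast that the $j$-th summand, together with its first $j$ derivatives, obeys the estimates of $S^{m_{j+1}}_{M,\delta,\kappa}$ with constant $\le 2^{-j}$. Since the $\xi$-cutoff $1-\chi(\xi/t_j)$ lies in $S^0_M\subset S^0_{M,\delta,\kappa}$, multiplying by it preserves the $\kappa$-subclass by part (i), and the usual termwise cancellation argument yields \eqref{asympt_exp}; uniqueness modulo $S^{-\infty}$ is tautological.

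Part (iii) is where the bulk of the work lies. Since $S^m_{M,\delta,\kappa}\subset S^m_{M,\delta}$ (Remark \ref{rmk_sym_k}), the calculus for $S^m_{M,\delta}$ established in \cite{GM-08,GM-09} tells me that $c=a\sharp b$ is a pseudodifferential operator whose symbol is
\[
(a\sharp b)(x,\xi)=\mathrm{Os}\!\iint e^{-iy\cdot\eta}a(x,\xi+\eta)b(x+y,\xi)\,dy\,d\eta\in S^{m+m'}_{M,\delta''},
\]
and that the formal Taylor expansion in $\eta$ generates the series in \eqref{asympt_prod} with remainder in $S^{m+m'-(1/\mu^\ast-\delta'/\mu_\ast)N}_{M,\delta''}$ under the hypothesis $\delta'<\mu_\ast/\mu^\ast$, which guarantees $1/\mu^\ast-\delta'/\mu_\ast>0$. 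What remains is to upgrade the membership to the $\kappa$-subclass. By part (i), each term $\tfrac{(-i)^{|\alpha|}}{\alpha!}\partial_\xi^\alpha a\,\partial_x^\alpha b$ lies in $S^{m+m'-\langle\alpha,1/M\rangle+\delta'(\langle\alpha,1/M\rangle-\kappa)_+}_{M,\delta'',\kappa}$, and the orders of these terms tend to $-\infty$ as $|\alpha|\to\infty$. I will then invoke part (ii) to construct $\widetilde c\in S^{m+m'}_{M,\delta'',\kappa}$ asymptotic to the full formal series in the $\kappa$-subclass sense, and observe that the difference $a\sharp b-\widetilde c$ lies in the intersection over all $N$ of $S^{m+m'-(1/\mu^\ast-\delta'/\mu_\ast)N}_{M,\delta''}$, hence in $S^{-\infty}$; therefore $a\sharp b\equiv\widetilde c\pmod{S^{-\infty}}$ already sits in the $\kappa$-subclass, and \eqref{asympt_prod} follows.

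The main obstacle I anticipate is precisely this last upgrade: the remainder of the oscillatory integral expansion is not directly amenable to the logarithmic threshold check at $\langle\beta,1/M\rangle=\kappa$. The reduction to the Borel sum $\widetilde c$ is designed to sidestep this, by transferring the delicate $\kappa$-dependence from the analysis of the remainder to the termwise check, which is handled cleanly by part (i).
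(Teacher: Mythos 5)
Your overall architecture matches the paper's almost exactly: in part (iii) both you and the authors invoke the already-established $S^m_{M,\delta}$ calculus from \cite{GM-09} to obtain $c=a\sharp b\in S^{m+m'}_{M,\delta''}$ with the asymptotic expansion, then build a comparison symbol $\tilde c$ in the $\kappa$-subclass via parts (i)+(ii), and note $c-\tilde c\in S^{-\infty}$. Part (i) is also handled the same way (Leibniz plus case analysis on the threshold $\kappa$; the paper uses the sub-additivity $(x+y)_+\le x_++y_+$, which is precisely your $(s-\kappa)_++(t-\kappa)_+\le(s+t-\kappa)_+$ re-written, so this is equivalent).

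The one place where you genuinely diverge from the paper is part (ii). You run a Borel summation from scratch inside the $\kappa$-subclass. The paper instead takes the symbol $a\in S^{m_0}_{M,\delta}$ whose existence is already guaranteed by \cite[Proposition 2.3]{GM-09}, and then \emph{upgrades} this very $a$ to $S^{m_0}_{M,\delta,\kappa}$ by a simple trick: for each fixed $(\alpha,\beta)$, pick $N_{\alpha,\beta}$ so large that $m_{N_{\alpha,\beta}}+\delta\langle\beta,1/M\rangle\le m_0+\delta(\langle\beta,1/M\rangle-\kappa)_+$ (resp. $m_{N_{\alpha,\beta}}+\delta\kappa\le m_0$), so the remainder $R_{N_{\alpha,\beta}}\in S^{m_{N_{\alpha,\beta}}}_{M,\delta}$ automatically satisfies the $\kappa$-subclass bounds of order $m_0$. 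This bypasses entirely the question of whether a Borel sum preserves the refined polynomial/logarithmic estimates through the infinite summation, which your sketch leaves implicit. If you insist on the direct summation, note that the $\xi$-cutoff must be taken anisotropically, $\chi(t_j^{-1/M}\xi)$ rather than $\chi(\xi/t_j)$: the latter is not a symbol in $S^0_M$ when $M\neq(1,\dots,1)$, because $\partial_\xi^\alpha\chi(\xi/t_j)=t_j^{-|\alpha|}(\partial^\alpha\chi)(\xi/t_j)$ does not gain $\langle\xi\rangle_M^{-\langle\alpha,1/M\rangle}$ on its support.

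Finally, a small misquotation in your part (iii): from part (i) as stated, $\partial_\xi^\alpha a\,\partial_x^\alpha b\in S^{m+m'-(1-\delta')\langle\alpha,1/M\rangle}_{M,\delta'',\kappa}$, not $S^{m+m'-\langle\alpha,1/M\rangle+\delta'(\langle\alpha,1/M\rangle-\kappa)_+}_{M,\delta'',\kappa}$; the latter would be strictly sharper but does \emph{not} follow from part (i) (and in fact fails, since $(s+t-\kappa)_+\ge(s-\kappa)_++(t-\kappa)_+$, not $\le$). The correct, weaker order is all that is needed since it still tends to $-\infty$ under $\delta'<\mu_\ast/\mu^\ast$, so this slip does not break the argument.
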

\begin{proof}
{\em (i)}:  From estimates \eqref{sm-sym_k_est_1}, \eqref{sm-sym_k_est_2}, it is very easy to check that for any multi-index $\theta\in\mathbb Z^n_+$
\[
a(x,\xi)\in S^m_{M,\delta,\kappa}\quad\mbox{implies}\quad\partial^\theta_\xi a(x,\xi)\in S^{m-\langle\theta,1/M\rangle}_{M,\delta,\kappa}\,;
\]
hence we can limit the proof of (i) to $\theta=0$ and an arbitrary $\nu\in\mathbb Z^n_+$, $\nu\neq 0$.
\newline
Let $\alpha\,,\beta\in\mathbb Z^n_+$ be arbitrary multi-indices and assume, for the first, that $\langle\beta,1/M\rangle\neq\kappa$; if $\langle\nu+\beta,1/M\rangle\neq\kappa$, we then get
\begin{equation}\label{(i)_eq:1}
\begin{split}
\vert\partial^\alpha_\xi\partial^\beta_x\left(\partial^\nu_x a\right)(x,\xi)\vert &\le C_{\nu,\alpha,\beta}\langle\xi\rangle_M^{m-\langle\alpha,1/M\rangle+\delta\left(\langle\nu+\beta,1/M\rangle-\kappa\right)_+}\\ &\le  C_{\nu,\alpha,\beta}\langle\xi\rangle_M^{m-\langle\alpha,1/M\rangle+\delta\langle\nu,1/M\rangle+\delta\left(\langle\beta,1/M\rangle-\kappa\right)_+}\,,
\end{split}
\end{equation}
in view of estimates \eqref{sm-sym_k_est_1} and the sub-additivity inequality
\begin{equation}\label{sub_pos}
(x+y)_+\le x_++y_+\,,\quad\forall\,x,y\in\mathbb R\,.
\end{equation}
Assume now that $\langle\nu+\beta,1/M\rangle=\kappa$; then
\begin{equation}\label{(i)_eq:2}
\begin{split}
\vert\partial^\alpha_\xi\partial^\beta_x\left(\partial^\nu_x a\right)(x,\xi)\vert &\le C_{\nu,\alpha,\beta}\langle\xi\rangle_M^{m-\langle\alpha,1/M\rangle}\log(1+\langle\xi\rangle_M^{\delta})\,,
\end{split}
\end{equation}
in view of \eqref{sm-sym_k_est_2}. Since
\[
\langle\nu+\beta,1/M\rangle=\kappa\quad\mbox{and}\quad\langle\beta,1/M\rangle\neq\kappa
\]
imply
\[
\langle\beta,1/M\rangle<\kappa\quad\mbox{and}\quad\langle\nu,1/M\rangle>0\,,
\]
then
\[
\log(1+\langle\xi\rangle_M^{\delta})\le C_{\nu,\beta}\langle\xi\rangle_M^{\delta\langle\nu,1/M\rangle}\equiv C_{\nu,\beta}\langle\xi\rangle_M^{\delta\langle\nu,1/M\rangle+\delta\left(\langle\beta,1/M\rangle-\kappa\right)_+}\,,
\]
which, combined with \eqref{(i)_eq:2}, leads again to \eqref{(i)_eq:1}.
\newline
Assume now that $\langle\beta,1/M\rangle=\kappa$. Since also $\langle\nu,1/M\rangle>0$, from \eqref{sm-sym_k_est_1} we get
\[
\begin{split}
\vert\partial^\alpha_\xi\partial^\beta_x\left(\partial^\nu_x a\right)(x,\xi)\vert&\le C_{\nu,\alpha,\beta}\langle\xi\rangle_M^{m-\langle\alpha,1/M\rangle+\delta\left(\langle\nu+\beta,1/M\rangle-\kappa\right)_+}\\
&\le C^\prime_{\nu,\alpha,\beta}\langle\xi\rangle_M^{m-\langle\alpha,1/M\rangle+\delta\langle\nu,1/M\rangle}\log(1+\langle\xi\rangle_M^{\delta})\,,
\end{split}
\]
because $\left(\langle\nu+\beta,1/M\rangle-\kappa\right)_+=\langle\nu+\beta,1/M\rangle-\kappa=\langle\nu,1/M\rangle$ and we also use the trivial inequality
\begin{equation}\label{low_log}
\log 2\le\log(1+\langle\xi\rangle_M^{\delta})\,,\quad\forall\,\xi\in\mathbb R^n\,.
\end{equation}
The preceding calculations show that $\partial^\nu_x a(x,\xi)\in S^{m+\delta\langle\nu,1/M\rangle}_{M,\delta,\kappa}$.
\newline
Similar trivial, while overloading, arguments can be used to prove the second statement of (i) concerning the product of symbols.

\smallskip
{\em (ii)} It is known from the symbolic calculus in classes $S^m_{M,\delta}$, cf. \cite[Proposition 2.3]{GM-09}, that for a sequence of symbols $\{a_j\}_{j=0}^{+\infty}$, obeying the assumptions made in (ii), there exists $a(x,\xi)\in S^{m_0}_{M,\delta}$, which is unique up to a remainder in $S^{-\infty}$, such that
\begin{equation}\label{asympt_exp_0}
a-\sum\limits_{j<N}a_j\in S^{m_N}_{M,\delta}\,,\quad\mbox{for all integers}\,\,\,N>0\,.
\end{equation}
It remains to check that $a(x,\xi)$ actually belongs to $S^{m_0}_{M,\delta,\kappa}$, namely its derivatives satisfy inequalities \eqref{sm-sym_k_est_1}, \eqref{sm-sym_k_est_2}. In view of \eqref{asympt_exp_0}, for any positive integer $N$, the symbol $a(x,\xi)$ can be represented in the form
\begin{equation}\label{dec}
a(x,\xi)=a_N(x,\xi)+R_N(x,\xi)\,,
\end{equation}
where $a_N:=\sum\limits_{j<N}a_j$ e $R_N\in S^{m_N}_{M,\delta}$.
\newline
Since $\lim\limits_{j\to +\infty}m_j=-\infty$, for all $\alpha,\beta\in\mathbb Z^n_+$ an integer $N_{\alpha,\beta}>0$ can be found such that
\begin{equation}\label{dis_ordini}
\begin{split}
& m_{N_{\alpha,\beta}}+\delta\langle\beta,1/M\rangle\le m_0+\delta(\langle\beta,1/M\rangle-\kappa)_+\,,\quad\mbox{if}\,\,\,\langle\beta,1/M\rangle\neq\kappa\,,\\
& m_{N_{\alpha,\beta}}+\delta\kappa\le m_0\,,\quad\mbox{if}\,\,\,\langle\beta,1/M\rangle=\kappa\,,
\end{split}
\end{equation}
hence let $a$ be represented in form \eqref{dec} with $N=N_{\alpha,\beta}$ (from the above inequalities $N_{\alpha,\beta}$ can be chosen independent of $\alpha$, as a matter of fact). Since $\{m_j\}$ is decreasing, from $a_j\in S^{m_j}_{M,\delta,\kappa}$ for every $j\ge 0$, we deduce at once that $a_{N_{\alpha,\beta}}\in S^{m_0}_{M,\delta,\kappa}$. As for the remainder $R_{N_{\alpha,\beta}}$, from $R_{N_{\alpha,\beta}}\in S^{m_{N_{\alpha,\beta}}}_{M,\delta}$, inequalities \eqref{dis_ordini} and \eqref{low_log}, we deduce
\[
\begin{split}
\vert\partial^\alpha_\xi\partial^\beta_x R_{N_{\alpha,\beta}}(x,\xi)\vert &\le C_{\alpha,\beta}\langle\xi\rangle^{m_{N_{\alpha,\beta}}-\langle\alpha,1/M\rangle+\delta\langle\beta,1/M\rangle}_M\\
&\le \begin{cases}C^\prime_{\alpha,\beta}\langle\xi\rangle^{m_0-\langle\alpha,1/M\rangle+\delta(\langle\beta,1/M\rangle-\kappa)_+}_M\,,\quad\mbox{if}\,\,\,\langle\beta,1/M\rangle\neq\kappa\,,\\
C^\prime_{\alpha,\beta}\langle\xi\rangle^{m_0-\langle\alpha,1/M\rangle}_M\log(1+\langle\xi\rangle_M^{\delta})\,,\quad\mbox{if}\,\,\,\langle\beta,1/M\rangle=\kappa\,.\end{cases}
\end{split}
\]
From \eqref{dec} with $N=N_{\alpha,\beta}$ and estimates above, we deduce
\begin{equation*}
\begin{split}
\vert\partial^\alpha_\xi &\partial^\beta_x a(x,\xi)\vert\le\vert\partial^\alpha_\xi\partial^\beta_x a_{N_{\alpha,\beta}}(x,\xi)\vert+\vert\partial^\alpha_\xi\partial^\beta_x R_{N_{\alpha,\beta}}(x,\xi)\vert\\
&\le\begin{cases}C^{\prime\prime}_{\alpha,\beta}\langle\xi\rangle^{m_0-\langle\alpha,1/M\rangle+\delta(\langle\beta\rangle-\kappa)_+}_M\,,\quad\mbox{if}\,\,\,\langle\beta,1/M\rangle\neq\kappa\,,\\
C^{\prime\prime}_{\alpha,\beta}\langle\xi\rangle^{m_0-\langle\alpha,1/M\rangle}_M\log(1+\langle\xi\rangle_M^{\delta})\,,\quad\mbox{if}\,\,\,\langle\beta,1/M\rangle=\kappa\end{cases}
\end{split}
\end{equation*}
and, because of the arbitatriness of $\alpha$ and $\beta$, this shows that $a\in S^{m_0}_{M,\delta,\kappa}$.

\smallskip
{\em (iii)} By still referring to the symbolic calculus in classes $S^m_{M,\delta}$, cf \cite[Proposition 2.5]{GM-09}, it is known that the product of two pseudodifferential operators $a(x,D)$ e $b(x,D)$ with symbols like in the statement (iii) is again a pseudodifferential operator $c(x,D)=a(x,D)b(x,D)$ with symbol $c(x,\xi)=(a\sharp b)(x,\xi)\in S^{m+m^\prime}_{M,\delta^{\prime\prime}}$, if $0\le\delta^\prime<\mu_\ast/\mu^\ast$; moreover, such a symbol enjoys the asymptotic expansion
\begin{equation}\label{sv_prodotto}
c(x,\xi)-\sum\limits_{\vert\alpha\vert<N}\frac{(-i)^{\vert\alpha\vert}}{\alpha !}\partial^\alpha_{\xi}a(x,\xi)\partial^\alpha_x b(x,\xi)\in S^{m+m^\prime-(1/\mu^\ast-\delta^\prime/\mu_\ast)N}_{M,\delta^{\prime\prime}}\,,\quad N\ge 1\,.
\end{equation}
To end up, it sufficient applying statements (i) and (ii) above to the sequence $\{c_k\}_{k=0}^{+\infty}$ of symbols
\[
c_k(x,\xi):=\sum\limits_{\vert\alpha\vert=k}\frac{(-i)^{k}}{\alpha !}\partial^\alpha_{\xi}a(x,\xi)\partial^\alpha_x b(x,\xi)\,,\quad k=0,1,\dots\,.
\]
From statement (i) it is immediately seen that $c_k(x,\xi)\in S^{m+m^\prime-(1/\mu^\ast-\delta^\prime/\mu_\ast)k}_{M,\delta^{\prime\prime},\kappa}$ for all integers $k\ge 0$. Since the sequence $\{m_k\}_{k=0}^{+\infty}$ of orders $m_k:=m+m^\prime-(1/\mu^\ast-\delta^\prime/\mu_\ast)k$ is decreasing, in view of $0\le\delta^\prime<\mu_\ast/\mu^\ast$, it follows from (ii) that a symbol $\tilde c(x,\xi)\in S^{m+m^\prime}_{M,\delta^{\prime\prime},\kappa}$ exists such that the same as the asymptotic formula \eqref{sv_prodotto} holds true with $\tilde c(x,\xi)$ instead of $c(x,\xi)$; moreover, from uniqueness of $c(x,\xi)$ (up to a symbol in $S^{-\infty}$), it also follows that $\tilde c(x,\xi)-c(x,\xi)\in S^{-\infty}$, hence the symbol $c(x,\xi)$ actually belongs to $S^{m+m^\prime}_{M,\delta^{\prime\prime},\kappa}$.
\end{proof}
%%%%%%%%%%%%%%%%%%%%%%%%%%%%%%%%%%%%%%%%%%%%%%%%%%%%%%%%%%%%%%%%%%%%%%%%%%%%%%%%%%%%%%%%%%%%%%%%%%%%%
\subsection{Parametrix of an elliptic operator with symbol in $S^m_{M,\delta,\kappa}$}\label{par_sct}
In order to perform the analysis of local and microlocal propagation of singularities of PDE on $M-$Fourier Lebesgue spaces, cf. Sect. \ref{propation_sing_FL_sct}, this section in devoted to the construction of the parametrix of a $M-$elliptic operator with symbol in $S^m_{M,\delta,\kappa}$.
\newline
We first recall the notion of $M-$elliptic symbol, we are going to deal with, see \cite{GM-08}, \cite{GM-09}.
\begin{definition}\label{M_elliptic_symb}
We say that $a(x,\xi)\in S^m_{M,\delta}$, or the related operator $a(x,D)$, is $M-$elliptic if there are constants $c_0>0$ and $R>1$ satisfying
\begin{equation}\label{M-ell}
\vert a(x,\xi)\vert\ge c_0\langle\xi\rangle_M^m\,,\quad\forall\,(x,\xi)\in\mathbb R^{2n}\,,\,\,\,\vert\xi\vert_M\ge R\,.
\end{equation}
\end{definition}
\begin{proposition}\label{M_elliptic_prop}
For $m\in\mathbb R$, $\kappa>0$ and $0\le\delta<\mu_\ast/\mu^\ast$, let the symbol $a(x,\xi)\in S^m_{M,\delta,\kappa}$ be $M-$elliptic. Then there exists $b(x,\xi)\in S^{-m}_{M,\delta,\kappa}$ such that $b(x,D)$ is a parametrix of the operator $a(x,D)$, i.e.
\begin{equation}\label{par_id}
b(x,D)a(x,D)=I+l(x,D)\,,\qquad a(x,D)b(x,D)=I+r(x,D)\,,
\end{equation}
where $I$ is the identity operator and $l(x,D)$, $r(x,D)$ are pseudodifferential operators with symbols $l(x,\xi), r(x,\xi)\in S^{-\infty}$.
\end{proposition}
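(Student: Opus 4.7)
The plan is to mimic the classical parametrix construction for elliptic pseudodifferential operators (as carried out for the larger class $S^m_{M,\delta}$ in \cite{GM-08, GM-09}), but to check at every stage that one stays inside the smaller subclass $S^m_{M,\delta,\kappa}$. The main technical point is that the symbolic calculus of Proposition \ref{symb_calc_prop} is closed within the classes $S^m_{M,\delta,\kappa}$ provided that $0\le\delta<\mu_\ast/\mu^\ast$, so once the ``seed'' of the parametrix is placed in the right class, all subsequent steps stay there.

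First I would fix a cutoff $\chi=\chi(\xi)\in C^\infty(\mathbb R^n)$ with $\chi(\xi)=0$ for $|\xi|_M\le R$ and $\chi(\xi)=1$ for $|\xi|_M\ge 2R$, where $R>1$ is the constant in the ellipticity condition \eqref{M-ell}; by Remark \ref{rmk:partition_unity} one has $\chi\in S^0_M\subseteq S^0_{M,\delta,\kappa}$. The seed of the parametrix is
\[
b_0(x,\xi):=\frac{\chi(\xi)}{a(x,\xi)}\,.
\]
The crucial step is to prove $b_0\in S^{-m}_{M,\delta,\kappa}$. This is where the specific structure of the class $S^m_{M,\delta,\kappa}$ must be exploited: applying the Fa\`a di Bruno formula to $1/a$ expresses $\partial^\alpha_\xi\partial^\beta_x b_0$ as a finite linear combination of products
\[
\chi(\xi)\,a(x,\xi)^{-k-1}\prod_{j=1}^k\partial^{\alpha_j}_\xi\partial^{\beta_j}_x a(x,\xi)\,,\qquad \sum\alpha_j\le\alpha\,,\ \sum\beta_j=\beta\,,
\]
times derivatives of $\chi$. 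Using ellipticity and the defining estimates \eqref{sm-sym_k_est_1}, \eqref{sm-sym_k_est_2} of $a$, the elementary arithmetic inequality
\[
\sum_{j=1}^k\bigl(\langle\beta_j,1/M\rangle-\kappa\bigr)_+\le\bigl(\langle\beta,1/M\rangle-\kappa\bigr)_+\qquad\text{whenever }\sum_j\beta_j=\beta,
\]
(which is the key observation, easily verified by distinguishing the indices $j$ for which $\langle\beta_j,1/M\rangle>\kappa$ and using $\kappa>0$) together with the elementary bound $\log(1+\langle\xi\rangle_M^\delta)\le C_{\nu}\langle\xi\rangle_M^{\delta\langle\nu,1/M\rangle}$ valid for $\langle\nu,1/M\rangle>0$, will yield precisely \eqref{sm-sym_k_est_1} and \eqref{sm-sym_k_est_2} for $b_0$ with order $-m$. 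Hence $b_0\in S^{-m}_{M,\delta,\kappa}$.

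Next, by Proposition \ref{symb_calc_prop}(iii) we have $b_0\sharp a\in S^0_{M,\delta,\kappa}$, and from the asymptotic formula \eqref{asympt_prod} with $N=1$ together with $b_0(x,\xi)a(x,\xi)=\chi(\xi)=1-(1-\chi(\xi))$ (where $1-\chi\in S^{-\infty}$ as it is compactly supported in $\xi$), we obtain
\[
b_0\sharp a=1+r_1\,,\qquad r_1\in S^{-\varepsilon_0}_{M,\delta,\kappa}\,,\quad \varepsilon_0:=\tfrac{1}{\mu^\ast}-\tfrac{\delta}{\mu_\ast}>0\,.
\]
I would then invert $1+r_1$ by a formal Neumann series: by part (ii) of Proposition \ref{symb_calc_prop} applied to $\{(-r_1)^{\sharp j}\}_{j\ge 0}$, which lie in $S^{-j\varepsilon_0}_{M,\delta,\kappa}$ with orders decreasing to $-\infty$, there exists $e\in S^0_{M,\delta,\kappa}$ such that $e\sim\sum_{j\ge 0}(-r_1)^{\sharp j}$, so that $(1+r_1)\sharp e=1$ modulo $S^{-\infty}$. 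Setting $b:=e\sharp b_0\in S^{-m}_{M,\delta,\kappa}$ gives
\[
b\sharp a=e\sharp(1+r_1)=1\quad\text{mod }S^{-\infty}\,,
\]
which is the first identity in \eqref{par_id} with $l\in S^{-\infty}$. A completely symmetric argument produces a right parametrix $b^\prime\in S^{-m}_{M,\delta,\kappa}$ with $a\sharp b^\prime=1$ modulo $S^{-\infty}$; a standard associativity argument ($b\sharp a\sharp b^\prime$ computed in two ways) then shows $b-b^\prime\in S^{-\infty}$, so $b$ works on both sides and \eqref{par_id} holds.

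The main obstacle, as indicated, is the verification that $b_0=\chi/a$ stays inside the refined class $S^{-m}_{M,\delta,\kappa}$ (and not merely in $S^{-m}_{M,\delta}$): the logarithmic borderline case $\langle\beta,1/M\rangle=\kappa$ in Definition \ref{sm-sym_k_dfn} must be handled with care, distinguishing in the Fa\`a di Bruno expansion the partitions for which some $\beta_j$ has $M$-order exactly $\kappa$ from those in which all $\beta_j$ lie strictly below or strictly above $\kappa$, and absorbing every logarithmic factor into the available power growth exactly as in the proof of Proposition \ref{symb_calc_prop}(i). Once this is in hand, the rest of the argument is a routine application of the symbolic calculus developed in the preceding subsection.
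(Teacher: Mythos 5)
Your proposal is correct and follows essentially the same strategy as the paper's proof: construct a ``seed'' $b_0$ that inverts $a$ for large $\xi$, verify via Fa\`a di Bruno's formula (with exactly the super-additivity $\sum_j(\langle\beta_j,1/M\rangle-\kappa)_+\le(\langle\beta,1/M\rangle-\kappa)_+$ and absorption of logarithms) that $b_0\in S^{-m}_{M,\delta,\kappa}$, and then bootstrap via the symbolic calculus of Proposition~\ref{symb_calc_prop}. The only cosmetic difference is the choice of seed: you take $b_0=\chi(\xi)/a(x,\xi)$ with a frequency cutoff $\chi$, while the paper writes $b_0=\langle\xi\rangle_M^{-m}F(\langle\xi\rangle_M^{-m}a)$ with $F\in C^\infty(\mathbb C)$ equal to $1/z$ for $|z|\ge c_0$; both give the same object modulo $S^{-\infty}$ and lead to the same Fa\`a di Bruno analysis.
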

\begin{proof}
The proof follows along the standard arguments employed in construcing the parametrix of an elliptic operator, see e.g. \cite{Folland}.
\newline
The first step consists to define a symbol $b_0(x,\xi)$ to be the inverse of $a(x,\xi)$, for sufficiently large $\xi$, that is
\begin{equation}\label{b0}
b_0(x,\xi):=\langle\xi\rangle^{-m}_MF\left(\langle\xi\rangle^{-m}_M a(x,\xi)\right)\,,
\end{equation}
with some function $F=F(z)\in C^\infty(\mathbb C)$ satisfying $F(z)=1/z$ for $\vert z\vert\ge c_0$ and where $c_0$ is the positive constant from \eqref{M-ell}.
From the symbolic calculus in the framework of $S^\infty_{M,\delta}$ (cf. \cite{GM-09}), it is easily shown that $b_0(x,\xi)\in S^{-m}_{M,\delta}$ and $\rho_1(x,\xi):=(a\sharp b_0)(x,\xi)-1\in S^{m-(1/\mu^\ast-\delta/\mu_\ast)}_{M,\delta}$, where, according to the notation introduced in Proposition \ref{symb_calc_prop}-(iii), $a\sharp b_0$ stands for the symbols of the product $a(x,D)b_0(x,D)$.
\newline
Then an operator $b(x,D)$ satisfying the second identity in \eqref{par_id} (that is a right-parametrix of $a(x,D)$) is defined as $b(x,D):=b_0(x,D)\rho(x,D)$ and where $\rho(x,D)$ is given by the Neumann-type series $\rho(x,D)=\sum\limits_{j=0}^{+\infty}\rho_1^j(x,D)$; more precisely, $\rho(x,D)$ is the pseudodifferential operator with symbol associated to the sequence of symbols $\rho_j(x,\xi)\in S^{-(1/\mu^\ast-\delta/\mu_\ast)j}_{M,\delta}$ recursively defined by
\begin{equation}\label{potenze_symb}
\rho_0:=1\quad\mbox{and}\quad\rho_j:=\rho_1\sharp\rho_{j-1}\,,\qquad\mbox{for}\,\,j=1,2,\dots\,.
\end{equation}
Since the sequence of orders $-(1/\mu^\ast-\delta/\mu_\ast)j$ tends to $-\infty$, once again in view of the symbolic calculus in $S^\infty_{M,\delta}$ (cf. \cite{GM-09}), a symbol $\rho(x,\xi)\in S^0_{M,\delta}$ such that
\begin{equation}\label{potenze_symb}
\rho-\sum\limits_{j<N}\rho_j\in S^{-(1/\mu^\ast-\delta/\mu_\ast)N}_{M,\delta}\,,\quad\mbox{for all integers}\,\,N\ge 1\,,
\end{equation}
is defined uniquely, up to symbols in $S^{-\infty}$.
\newline
One can finally show that $b(x,D)$, constructed as above, is a (two sided) parametrix of $a(x,D)$, see e.g. \cite[Ch. 4]{Folland} for more details.
\newline
In view of Proposition \ref{symb_calc_prop}, to end up it is sufficient to show that the symbol $b_0(x,\xi)\in S^{-m}_{M,\delta}$, defined in \eqref{b0}, actually belongs to $S^{-m}_{M,\delta,\kappa}$, that is its derivatives satisfy estimates \eqref{sm-sym_k_est_1}, \eqref{sm-sym_k_est_2}.
Since these estimates only require a more specific behavior of $x-$derivatives, compared to a generic symbol in $S^\infty_{M,\delta}$, we may reduce to check their validity for $x-$derivatives alone. Because $\langle\xi\rangle^{-m}_M a(x,\xi)\in S^0_{M,\delta,\kappa}$, we are going to only treat the case of a symbol $a(x,\xi)\in S^0_{M,\delta,\kappa}$.
\newline
For an arbitrary nonzero multi-index $\beta\neq 0$, from Fa\`a di Bruno's formula, we first recover
\begin{equation}\label{FdB}
\begin{split}
\vert\partial_x^\beta b_0(x,\xi)\vert &\le\sum\limits_{k=1}^{\vert\beta\vert}C_k\!\!\!\!\!\!\sum\limits_{\beta^1+\dots+\beta^k=\beta}\!\!\!\!\!\!\vert\partial^{\beta^1}_xa(x,\xi)\vert\dots\vert\partial^{\beta^k}_xa(x,\xi)\vert\,,
\end{split}
\end{equation}
where $C_k$ is a suitable positive constant depending only on $k\ge 0$ (notice that the function $F$ is bounded in $\mathbb C$ together with all its derivatives), and where, for each integer $k$ satisfying $1\le   k\le\vert\beta\vert$, the second sum in the right-hand side above is extended over all systems $\{\beta^1,\dots,\beta^k\}$ of nonzero multi-indices $\beta^j$ ($j=1,\dots,k$) such that $\beta^1+\dots+\beta^k=\beta$.
\newline
To apply estimates \eqref{sm-sym_k_est_1}, \eqref{sm-sym_k_est_2}, different cases must be considered separately.
\newline
Let us first assume that $\langle\beta,1/M\rangle\neq\kappa$. Since $a\in S^0_{M,\delta,\kappa}$, we have
\begin{equation}\label{stime_beta}
\vert\partial^{\beta^j}_xa(x,\xi)\vert\le C_j\langle\xi\rangle_M^{\delta(\langle\beta^j,1/M\rangle-\kappa)_+}\quad\mbox{or}\quad\vert\partial^{\beta^j}_xa(x,\xi)\vert\le C_j\log(1+\langle\xi\rangle_M^{\delta})\,,
\end{equation}
for all integers $1\le k\le\vert\beta\vert$ and $1\le j\le k$, according to whether $\langle\beta^j,1/M\rangle\neq\kappa$ or $\langle\beta^j,1/M\rangle=\kappa$, and suitable constants $C_j>0$.
\newline
If $\langle\beta,1/M\rangle<\kappa$ then $\langle\beta^j,1/M\rangle<\kappa$ for all $j=1,\dots,k$ and every  $1\le k\le\vert\beta\vert$, and
\[
\vert\partial_x^\beta b_0(x,\xi)\vert\le C_\beta\equiv C_\beta\langle\xi\rangle_M^{\delta(\left(\langle\beta,1/M\rangle-\kappa\right)_+}
\]
follows at once from \eqref{FdB} and \eqref{stime_beta}, with suitable $C_\beta>0$.
\newline
Assume now $\langle\beta,1/M\rangle>\kappa$, so that, for a given integer $1\le k\le\vert\beta\vert$ and an arbitrary system $\{\beta^1,\dots,\beta^k\}$ of multi-indices satisfying $\beta^1+\dots+\beta^k=\beta$, it could be either $\langle\beta^j,1/M\rangle\neq\kappa$ or $\langle\beta^j,1/M\rangle=\kappa$ for different indices $j=1,\dots,k$; up to a reordering of its elements, let $\{\beta^1,\dots,\beta^k\}$ be split into the sub-systems $\{\beta^1,\dots,\beta^{k^\prime}\}$ and $\{\beta^{k^\prime+1},\dots,\beta^{k}\}$ (for an integer $k^\prime$ with $1\le k^\prime<k$) such that $\langle\beta^{j},1/M\rangle\neq\kappa$ for all $1\le j\le k^\prime$ and $\langle\beta^\ell,1/M\rangle=\kappa$ for all $k^\prime+1\le\ell\le k$\footnote{Of course when $k=1$ then only $\langle\beta^1,1/M\rangle\equiv\langle\beta,1/M\rangle>\kappa$ can occur.}. In such a case, from \eqref{FdB} and \eqref{stime_beta} we get
\begin{equation}\label{FdB1}
\begin{split}
\vert\partial_x^\beta b_0(x,\xi)\vert\le \sum\limits_{k=1}^{\vert\beta\vert}C_k\!\!\!\!\!\!\sum\limits_{\beta^1+\dots+\beta^k=\beta}&
\langle\xi\rangle_M^{\delta\{(\langle\beta^1,1/M\rangle-\kappa)_++\dots+(\langle\beta^{k^\prime},1/M\rangle-\kappa)_+\}}\\
&\times\left(\log(1+\langle\xi\rangle_M^{\delta})\right)^{k-k^\prime}\,.
\end{split}
\end{equation}
Under the previous assumptions, it can be shown that
\begin{equation*}
(\langle\beta^1,1/M\rangle-\kappa)_++\dots+(\langle\beta^{k^\prime},1/M\rangle-\kappa)_+\le (\langle\beta^\prime,1/M\rangle-\kappa)_+\,,
\end{equation*}
where we have set $\beta^\prime:=\beta^1+\dots+\beta^{k^\prime}$.
Suppose $\langle\beta^\prime,1/M\rangle\le\kappa$ (thus $(\langle\beta^\prime,1/M\rangle-\kappa)_+=0$); since $\langle\beta,1/M\rangle>\kappa$, we have
\begin{equation}\label{FdB2}
\begin{split}
\langle &\xi\rangle_M^{\delta\{(\langle\beta^1,1/M\rangle-\kappa)_++\dots+(\langle\beta^{k^\prime},1/M\rangle-\kappa)_+\}}
\left(\log(1+\langle\xi\rangle_M^{\delta})\right)^{k-k^\prime}\\
&\le
\langle\xi\rangle^{\delta(\langle\beta^\prime,1/M\rangle-\kappa)_+}_M\left(\log(1+\langle\xi\rangle_M^{\delta})\right)^{k-k^\prime}\equiv \left(\log(1+\langle\xi\rangle_M^{\delta})\right)^{k-k^\prime}\\
&\le c_{\beta,k,k^\prime}\langle\xi\rangle^{\delta(\langle\beta,1/M\rangle-\kappa)}_M\equiv c_{\beta,k,k^\prime}\langle\xi\rangle^{\delta(\langle\beta,1/M\rangle-\kappa)_+}_M\,.
\end{split}
\end{equation}
Suppose now $\langle\beta^\prime,1/M\rangle>\kappa$ (hence $(\langle\beta^\prime,1/M\rangle-\kappa)_+=\langle\beta^\prime,1/M\rangle-\kappa$). Since $\langle\beta,1/M\rangle>\langle\beta^\prime,1/M\rangle$, we get
\begin{equation}\label{FdB3}
\begin{split}
\langle &\xi\rangle_M^{\delta\{(\langle\beta^1,1/M\rangle-\kappa)_++\dots+(\langle\beta^{k^\prime},1/M\rangle-\kappa)_+\}}
\left(\log(1+\langle\xi\rangle_M^{\delta})\right)^{k-k^\prime}\\
&\le\langle\xi\rangle^{\delta(\langle\beta^\prime,1/M\rangle-\kappa)_+}_M\left(\log(1+\langle\xi\rangle_M^{\delta})\right)^{k-k^\prime}\equiv \langle\xi\rangle^{\delta(\langle\beta^\prime,1/M\rangle-\kappa)}_M \left(\log(1+\langle\xi\rangle_M^{\delta})\right)^{k-k^\prime}\\
&\le c_{\beta,\beta^\prime,k,k^\prime}\langle\xi\rangle^{\delta\{(\langle\beta^\prime,1/M\rangle-\kappa)+(\langle\beta,1/M\rangle-\langle\beta^\prime,1/M\rangle)\}}_M= c_{\beta,\beta^\prime,k,k^\prime}\langle\xi\rangle_M^{\delta(\langle\beta,1/M\rangle-\kappa)}\\
&\equiv c_{\beta,\beta^\prime,k,k^\prime}\langle\xi\rangle_M^{\delta(\langle\beta,1/M\rangle-\kappa)_+} \,.
\end{split}
\end{equation}
In the boarder cases of a system $\{\beta^1,\dots,\beta^k\}$ where either $\langle\beta^j,1/M\rangle\neq\kappa$ for all $j$ or $\langle\beta^j,1/M\rangle=\kappa$ for all $j$\footnote{Notice that, under $\langle\beta,1/M\rangle>\kappa$, this second case can only occur when $k\ge 2$.}, all preceding arguments can be repeated, by formally taking $k^\prime=k$ in \eqref{FdB2} or $\beta^\prime=0$ and $k^\prime=0$ in \eqref{FdB3} respectively; thus we end up with the same estimates as above. Using \eqref{FdB2}, \eqref{FdB3} in the right-hand side of \eqref{FdB1} leads to
\begin{equation*}
\vert\partial_x^\beta b_0(x,\xi)\vert\le C_\beta\langle\xi\rangle_M^{\delta(\left(\langle\beta,1/M\rangle-\kappa\right)_+}\,.
\end{equation*}
\end{proof}

\subsection{Continuity of pseudodifferential operators with symbols in $S^m_{M,\delta,\kappa}$}\label{cont_smooth_symb_sct} 
Throughout the rest of this section, we assume that $M\in\mathbb R^n_+$ has all integer components. The Fourier-Lebesgue continuity of pseudodifferential operators with symbols in $S^m_{M,\delta,\kappa}$ is recovered as a consequence of Theorem \ref{FL_thm}.

\medskip
Taking advantage from growing estimates \eqref{sm-sym_k_est_1}, \eqref{sm-sym_k_est_2}, we first analyze the relations between smooth {\em local} symbols of type $S^m_{M,\delta,\kappa}$ and symbols of limited Fourier-Lebesgue smoothness introduced in Sect. \ref{FLcont_sct}  .
\begin{proposition}\label{FL_sym_prop}
For $M=(\mu_1,\dots,\mu_n)$ with strictly positive integer components, $m\in\mathbb R$, $\delta\in[0,1]$ and $\kappa>0$, let the symbol $a(x,\xi)\in S^m_{M,\delta,\kappa}$ satisfy the \textit{localization} condition \eqref{localization}
for some compact set $\mathcal K\subset\mathbb R^n$. The for all integers $N\ge 0$ and multi-indices $\alpha\in\mathbb Z^n_+$ there exists a postive constant $C_{\alpha,N,\mathcal K}$ such that:
\begin{eqnarray}
&\langle\eta\rangle_M^N\vert\partial_\xi^\alpha\hat{a}(\eta,\xi)\vert\le C_{\alpha,N,\mathcal K}\langle\xi\rangle_M^{m-\langle\alpha,1/M\rangle+\delta(N-\kappa)_+}\,,\quad\mbox{if}\,\,\,N\neq\kappa\,,\label{FL_sym_est_1}\\
&\langle\eta\rangle_M^N\vert\partial_\xi^\alpha\hat{a}(\eta,\xi)\vert\le C_{\alpha,N,\mathcal K}\langle\xi\rangle_M^{m-\langle\alpha,1/M\rangle}\log(1+\langle\xi\rangle_M^{\delta})\,,\quad\mbox{if}\,\,\,N=\kappa\,,\label{FL_sym_est_2}
\end{eqnarray}
where $\hat{a}(\eta,\xi)$ is the partial Fourier transform of $a(x,\xi)$ with respect to $x$:
\[
\hat{a}(\eta,\xi):=\widehat{a(\cdot,\xi)}(\eta)\,,\quad\forall\,(\eta,\xi)\in\mathbb R^{2n}\,.
\]
\end{proposition}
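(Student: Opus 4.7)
The plan is to reduce bounds on the partial Fourier transform $\hat a(\eta,\xi)$ of the symbol $a(x,\xi)$ to pointwise bounds on the $x$-derivatives of $a$, exploiting the compact support of $a(\cdot,\xi)$ in $\mathcal K$ to turn $L^1$-norms into sup-norms, and then invoking the symbol-class estimates \eqref{sm-sym_k_est_1}--\eqref{sm-sym_k_est_2}.

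The first step is an algebraic reduction: for every integer $N\ge 0$ I would prove
\[
\langle\eta\rangle_M^N\le C_N\sum_{|\bar k|\le N}|\eta^{M\bar k}|\,,\quad\eta\in\mathbb R^n,
\]
where $\bar k=(k_1,\dots,k_n)\in\mathbb Z^n_+$ and $M\bar k:=(\mu_1 k_1,\dots,\mu_n k_n)$. This follows by combining $(1+t^2)^{1/2}\le 1+t$ with the trivial inequality $|\eta|_M\le\sqrt{n}\sum_j|\eta_j|^{\mu_j}$ (here one uses $\mu_j\in\mathbb N$, so $|\eta_j|^{\mu_j}$ is a genuine power) and multinomial expansion. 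The crucial feature is that each multi-index $\beta:=M\bar k$ obeys $\langle\beta,1/M\rangle=|\bar k|\le N$, i.e.\ its $M$-order is a non-negative integer bounded by $N$.

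The second step is integration by parts: for every $\gamma\in\mathbb Z^n_+$ the standard identity
\[
\eta^\gamma\,\partial_\xi^\alpha\hat a(\eta,\xi)=(-i)^{|\gamma|}\widehat{\partial_x^\gamma\partial_\xi^\alpha a}(\eta,\xi)
\]
combined with $\supp a(\cdot,\xi)\subseteq\mathcal K$ gives
\[
|\eta^\gamma\partial_\xi^\alpha\hat a(\eta,\xi)|\le\|\partial_x^\gamma\partial_\xi^\alpha a(\cdot,\xi)\|_{L^1}\le|\mathcal K|\sup_{x\in\mathcal K}|\partial_x^\gamma\partial_\xi^\alpha a(x,\xi)|.
\]
Applying this with $\gamma=M\bar k$ and inserting the bound from the first step yields
\[
\langle\eta\rangle_M^N|\partial_\xi^\alpha\hat a(\eta,\xi)|\le C_{N,\mathcal K}\sum_{|\bar k|\le N}\sup_{x\in\mathcal K}|\partial_x^{M\bar k}\partial_\xi^\alpha a(x,\xi)|.
\]

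The third and last step is to estimate each summand via the symbol class: since $\langle M\bar k,1/M\rangle=|\bar k|$, estimate \eqref{sm-sym_k_est_1} applies whenever $|\bar k|\ne\kappa$, producing the factor $\langle\xi\rangle_M^{m-\langle\alpha,1/M\rangle+\delta(|\bar k|-\kappa)_+}$, while \eqref{sm-sym_k_est_2} applies in the boundary case $|\bar k|=\kappa$ (which can occur only when $\kappa$ is a non-negative integer with $\kappa\le N$), producing the factor $\langle\xi\rangle_M^{m-\langle\alpha,1/M\rangle}\log(1+\langle\xi\rangle_M^\delta)$. A short case analysis concludes: if $N\ne\kappa$, the elementary bounds $(|\bar k|-\kappa)_+\le(N-\kappa)_+$ together with $\log(1+\langle\xi\rangle_M^\delta)\le\langle\xi\rangle_M^\delta\le\langle\xi\rangle_M^{\delta(N-\kappa)_+}$ (the latter valid when $N>\kappa$; no logarithm arises when $N<\kappa$) absorb everything into \eqref{FL_sym_est_1}; if $N=\kappa$ the logarithmic contribution from the term $|\bar k|=\kappa$ dominates the purely polynomial contributions from $|\bar k|<\kappa$ thanks to \eqref{low_log}, yielding \eqref{FL_sym_est_2}. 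The only mild obstacle is precisely this case analysis, which must be carried out sharply so that the polynomial exponent matches $\delta(N-\kappa)_+$ without loss.
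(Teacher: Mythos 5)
Your proof is correct and follows essentially the same route as the paper's: bound $\langle\eta\rangle_M^N$ by finitely many monomials $|\eta^\beta|$ with $\langle\beta,1/M\rangle\le N$, trade $\eta^\beta\partial_\xi^\alpha\hat a$ for $\widehat{\partial_x^\beta\partial_\xi^\alpha a}$, use the compact $x$-support to estimate the resulting $L^1$-norm by a sup-norm on $\mathcal K$, and invoke \eqref{sm-sym_k_est_1}--\eqref{sm-sym_k_est_2}. Your only departures are cosmetic improvements: restricting the monomial sum to $\beta=M\bar k$ of integer $M$-order $|\bar k|\le N$ (which makes the logarithmic case $\langle\beta,1/M\rangle=\kappa$ occur only for integer $\kappa$, in keeping with Remark \ref{rmk:FL_est}), and spelling out the concluding case analysis that the paper compresses into a single sentence.
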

\begin{proof}
For an arbitrary integer $N\ge 0$ we estimate
\begin{equation}\label{M-h-w_est}
\langle\eta\rangle_M^N\le C_{N}\sum\limits_{\langle\beta,1/M\rangle\le N}\vert\eta^\beta\vert\,,\quad\forall\,\eta\in\mathbb R^n\,,
\end{equation}
with some positive constant $C_N>0$ (independent of $M$), hence for any $\alpha\in\mathbb Z^n_+$
\begin{equation*}
\begin{split}
\langle &\eta\rangle_M^N\vert\partial_\xi^\alpha\hat{a}(\eta,\xi)\vert\le C_{N}\!\!\!\sum\limits_{\langle\beta,1/M\rangle\le N}\vert\eta^\beta\partial_\xi^\alpha\hat{a}(\eta,\xi)\vert=C_{N}\!\!\!\sum\limits_{\langle\beta,1/M\rangle\le N}\vert\widehat{\partial_x^\beta\partial_\xi^\alpha a}(\eta,\xi)\vert\\
&=C_{N}\!\!\!\sum\limits_{\langle\beta,1/M\rangle\le N}\left\vert\int_{\mathcal K} e^{-i\eta\cdot x}\partial_x^\beta\partial_\xi^\alpha a(x,\xi)dx\right\vert\le C_{N}\!\!\!\sum\limits_{\langle\beta,1/M\rangle\le N}\int_{\mathcal K}\vert\partial_x^\beta\partial_\xi^\alpha a(x,\xi)\vert dx\,.
\end{split}
\end{equation*}
Thus, we end up by using estimates \eqref{sm-sym_k_est_1}, \eqref{sm-sym_k_est_2} under the integral sign above. %and
%\[
%\langle\xi\rangle_M^{\delta(\langle\beta,1/M\rangle-\kappa)_+}\le \langle\xi\rangle_M^{\delta(N-\kappa)_+}\quad \mbox{for}\,\,\,\langle\beta,1/M\rangle\le N\,.
%\]
\end{proof}
\begin{remark}\label{rmk:FL_est}
{\rm Notice that estimates \eqref{FL_sym_est_2} are satisfied only when $\kappa>0$ is an integer number.}
\end{remark}
As a consequence of Proposition \ref{FL_sym_prop} we get the proof of Theorem \ref{FL_sym_cor}

\begin{proof}[Proof of Theorem \ref{FL_sym_cor}.]
For $\kappa$ satisfying \eqref{kappa_ass}, consider the estimates \eqref{FL_sym_est_1}, \eqref{FL_sym_est_2} of $\hat{a}(\eta,\xi)$ with $N=N_\ast:=[n/\mu_\ast]+1$. For sure, estimates \eqref{FL_sym_est_2} cannot occur, since $N_\ast$ is smaller than $\kappa$, whereas estimates \eqref{FL_sym_est_1} reduce to
\begin{equation}\label{FL1_est_1}
\vert\partial_\xi^\alpha\hat{a}(\eta,\xi)\vert\le C_{\alpha,N_\ast,\mathcal K}\langle\xi\rangle_M^{m-\langle\alpha,1/M\rangle}\langle\eta\rangle_M^{-N_\ast}\,,\quad\forall\,(\eta,\xi)\in\mathbb R^n\,.
\end{equation}
On the other hand, the left inequality in \eqref{PG} yields
\begin{equation*}
\langle\eta\rangle_M^{-N_\ast}\le C\langle\eta\rangle^{-\mu_\ast N_\ast}\,,\quad\forall\,\eta\in\mathbb R^n\,,
\end{equation*}
from  which, $\langle\cdot\rangle_M^{-N_\ast}\in L^1(\mathbb R^n)$ follows, since $\mu_\ast N_\ast>n$. Then integrating in $\mathbb R^n_\eta$ both sides of \eqref{FL1_est_1} leads to
\begin{equation}\label{FL1_est}
\Vert\partial_\xi^\alpha a(\cdot,\xi)\Vert_{\mathcal FL^1}\le \tilde C_{\alpha,N,\mathcal K}\langle\xi\rangle_M^{m-\langle\alpha,1/M\rangle}\,,\quad\forall\,\xi\in\mathbb R^n\,,
\end{equation}
which are just estimates \eqref{FL_est}.

\smallskip
For an arbitrary integer $r>0$, we consider again estimates \eqref{FL_sym_est_1}, \eqref{FL_sym_est_2} of $\hat{a}(\eta,\xi)$ with $N=N_r:=r+[n/\mu_\ast]+1$. Notice that from \eqref{kappa_ass}
\[
N_r-\kappa<N_r-[n/\mu_\ast]-1=r\,,\quad\mbox{hence}\quad (N_r-\kappa)_+\le r_+=r\,.
\]
Then \eqref{FL_sym_est_1}, \eqref{FL_sym_est_2} lead to
\begin{eqnarray*}
&\langle\eta\rangle_M^r\vert\partial_\xi^\alpha\hat{a}(\eta,\xi)\vert\le C_{\alpha,N_r,{\mathcal K}}\langle\eta\rangle_M^{-N_\ast}\langle\xi\rangle_M^{m-\langle\alpha,1/M\rangle+\delta r}\,,\quad\mbox{if}\,\,\,N_r\neq\kappa\,,\\
&\langle\eta\rangle_M^r\vert\partial_\xi^\alpha\hat{a}(\eta,\xi)\vert\le C_{\alpha,N_r,{\mathcal K}}\langle\eta\rangle_M^{-N_\ast}\langle\xi\rangle_M^{m-\langle\alpha,1/M\rangle}\log(1+\langle\xi\rangle_M^{\delta})\,,\quad\mbox{otherwise}\,,
\end{eqnarray*}
where $N_\ast=\left[n/\mu_\ast\right]+1$ as before. Then using the trivial estimate
\begin{equation}\label{triv_est}
\log(1+\langle\xi\rangle_M^{\delta})\le C_r\langle\xi\rangle_M^{\delta r}\,,\quad\forall\,\xi\in\mathbb R^n
\end{equation}
and integrating in $\mathbb R^n_\eta$ both sides of inequalities above gives
\begin{equation}\label{FLrM_est}
\Vert\partial_\xi^\alpha a(\eta,\xi)\Vert_{\mathcal FL^1_{r,M}}\le C_{\alpha,N_r,{\mathcal K}}\langle\xi\rangle_M^{m-\langle\alpha,1/M\rangle+\delta r}\,,\quad\forall\,\xi\in\mathbb R^n\,,
\end{equation}
which are nothing else estimates \eqref{X_est} with $p=1$ (so $q=+\infty$). Together with \eqref{FL1_est}, estimates above tell us that $a(x,\xi)\in\mathcal FL^1_{r,M}S^m_{M,\delta}(N)$, for all integer numbers $r>0$ and $N>0$ arbitrarily large.

\smallskip
Then applying to $a(x,\xi)$ the result of Theorem \ref{FL_thm} with $p=1$ and an arbitrary integer $r>0$ shows that $a(x,D)$ fulfils the boundedness in \eqref{FL_bdd} with $p=1$.

\medskip
Now we are going to prove that the same symbol $a(x,\xi)$ also belongs to the class $\mathcal FL^\infty_{r,M}(N)$ with an arbitrary integer number $r>n/\mu_\ast$ and $N>0$ arbitrarily large, so as to apply again Theorem \ref{FL_thm} to $a(x,D)$ with $p=+\infty$. To do so, it is enough considering once again estimates \eqref{FL_sym_est_1} for $\hat{a}(\eta,\xi)$ with an arbitrary integer $N\equiv r>\kappa$; noticing that, under the assumption \eqref{kappa_ass},
\[
r-\kappa<r-n/\mu_\ast\,,\quad\mbox{hence}\quad (r-\kappa)_+\le (r-n/\mu_\ast)_+=r-n/\mu_\ast\,,
\]
estimates \eqref{FL_sym_est_1} just reduce to
\begin{equation}\label{FLinfM}
\Vert\partial_\xi^\alpha a(\cdot,\xi)\Vert_{\mathcal FL^\infty_{r,M}}\le C_{\alpha,r,{\mathcal K}}\langle\xi\rangle_M^{m-\langle\alpha,1/M\rangle+\delta(r-n/\mu_\ast)}\,,\quad\forall\,\xi\in\mathbb R^n\,,
\end{equation}
which are exactly estimates \eqref{X_est} with $p=+\infty$ (the number of $\xi-$derivatives which these estimates apply to can be chosen here arbitrarily large). So, as announced before, Theorem \ref{FL_thm} can be applied to make the conclusion that the boundedness property \eqref{FL_bdd} holds true for $a(x,D)$ with $p=+\infty$ and an arbitrary integer $r>\kappa$, and this shows that $a(x,D)$ also exhibits the boundedness in \eqref{FL_bdd} with $p=+\infty$.

\medskip
To recover \eqref{FL_bdd} with an arbitrary summability exponent $1<p<+\infty$ it is then enough to argue by complex interpolation through Riesz-Thorin's Theorem.
\end{proof}
\begin{remark}
{\rm Let us remark that assumption \eqref{localization} on the $x$ support of the symbol $a(x,\xi)$ amounts to say that the continuous prolongement of $a(x,D)$ on $\mathcal FL^p_{s+m,M}$ takes values in $\mathcal FL^p_{s,M}$ only {\em locally}, see the next Definition \ref{loc_mcl_FL}.}
\end{remark}
%%%%%%%%%%%%%%%%%%%%%%%%%%%%%%%%%%%%%%%%%%%%%%%%%%%%%%%%%%%%%%%%%%%%%%%%%%%%%%%%%%%%%%%%%%%%%%%%%%%%%%%
\section{Decomposition of $M-$Fourier Lebesgue symbols}\label{propation_sing_FL_sct_0}
As in the preceding Sect. \ref{smooth_symb_sct}, we will assume later on that vector $M=(\mu_1,\dots,\mu_n)$ has strictly positive integer components. 
\newline
For $m, r\in\mathbb R$, $p\in[1,+\infty]$, $\delta\in[0,1]$, we set
\[
\mathcal FL^p_{r,M}S^m_{M,\delta}:=\bigcap\limits_{N=1}^{\infty}\mathcal FL^p_{r,M}S^m_{M,\delta}(N)
\]
and $\mathcal FL^p_{r,M}S^m_{M}:=\mathcal FL^p_{r,M}S^m_{M,0}$.
%%%%%%%%%%%%%%%%%%%%%%%%%%%%%%%%%%%%%%%%%%%%%%%%%%%%%%%%%%%%%%%%%%%%%%%%%%%%%%%%%%%%%%%%%%%%%%%%%%%
%\subsection{Decomposition of $M-$Fourier Lebesgue symbols}\label{splitting_method_sct}
In order to develop a regularity theory of $M-$elliptic linear PDEs with $M-$homogeneous Fourier Lebesgue coefficients, in the absence of a symbolic calculus for pseudodifferential operators with Fourier Lebesgue symbols (in particular the lack of a parametrix of an $M-$elliptic operator with non smooth coefficients), following the approach of Taylor \cite[\S 1.3]{MT-NLPDE}, we introduce here a decomposition of a $M-$Fourier Lebesgue symbol $a(x,\xi)\in\mathcal FL^p_{r,M}S^m_{M}$ as the sum of two terms: one is a $M-$homogeneous smooth symbol in $S^m_{M,\delta}$ and the other is still a Fourier Lebesgue symbol of lower order, decreased from $m$ by a positive quantity proportional to $\delta$, where $0<\delta<1$ is given, while arbitrary.
\newline
Such a decomposition is made by applying to the symbol $a(x,\xi)$ a suitable ``cut-off'' Fourier multiplier, ``splitting in the frequency space the (nonsmooth) coefficients of $a(x,\xi)$ as a sum of two contributions''.

\smallskip
Let us first consider a $C^\infty-$function $\phi$ such that $\phi(\xi)=1$
for $\langle\xi\rangle_M\le 1$ and $\phi(\xi)=0$ for
$\langle\xi\rangle_M> 2$. With a given $\varepsilon>0$, we set
 $\phi(\varepsilon^{\frac1{M}}\xi):=\phi(\varepsilon^{\frac1{m_1}}\xi_1,\dots,\varepsilon^{\frac1{m_n}}\xi_n)$ and let $\phi(\varepsilon^{\frac1{M}}D)$ denote the associated Fourier multiplier.
\newline
The following $M-$homogeneous version of \cite[Lemma 1.3.A]{MT-NLPDE}, shows the behavior of $\phi(\varepsilon^{\frac1{M}}D)$ on $M-$homogeneous Fourier Lebesgue spaces.
\begin{lemma}\label{le:2}
Let $p\in[1,+\infty]$ and $\varepsilon>0$ be arbitrarily fixed.
\begin{itemize}
\item[(i)] For every $\beta\in\mathbb Z^n_+$ and $r\in\mathbb R$, the Fourier multiplier $D^\beta\phi(\varepsilon^{\frac1{M}}D)$ extends as a bounded linear operator $D^\beta\phi(\varepsilon^{\frac1{M}}D):\mathcal FL^p_{r,M}\rightarrow\mathcal FL^p_{r,M}$ and there is a positive constant $C_\beta$, independent of $\varepsilon$, such that for all $u\in\mathcal FL^p_{r,M}$:
\begin{equation}
\Vert
D^{\beta}\phi(\varepsilon^{\frac1{M}}D)u\Vert_{\mathcal FL^p_{r,M}}\le C_{\beta}\varepsilon^{-\langle\beta,\frac1{M}\rangle}\Vert
u\Vert_{\mathcal FL^p_{r,M}}\,,\quad\forall\,u\in\mathcal FL^p_{r,M}\,;\label{ineq:1}
\end{equation}
\item[(ii)] For all $r\in\mathbb R$ and $t\ge 0$, the Fourier multiplier $I-\phi(\varepsilon^{\frac1{M}}D)$ (where $I$ denotes the identity operator) extends as a bounded linear operator $I-\phi(\varepsilon^{\frac1{M}}D):\mathcal FL^p_{r,M}\rightarrow\mathcal FL^p_{r-t,M}$ and there exists a constant $C_t>0$, independent of $\varepsilon$, such that:
\begin{equation}
\Vert u-\phi(\varepsilon^{\frac1{M}}D)u\Vert_{\mathcal FL^p_{r-t,M}}\le
C_t\varepsilon^t\Vert u\Vert_{\mathcal FL^p_{r,M}}\,,\quad\forall\,u\in\mathcal FL^p_{r,M}\,;\label{ineq:2}
\end{equation}
\item[(iii)] If $r>\frac{n}{\mu_\ast q}$, where $\frac1{p}+\frac1{q}=1$, and $\beta\in\mathbb Z^n_+$, then $D^\beta\phi(\varepsilon^{1/M}D)$ and $I-\phi(\varepsilon^{\frac1{M}}D)$ extend as bounded linear operators $D^\beta\phi(\varepsilon^{1/M}D),\, I-\phi(\varepsilon^{\frac1{M}}D):\mathcal FL^p_{r,M}\rightarrow\mathcal FL^1$ and there are constants $C_{r,\beta}$ and $C_r$, independent of $\varepsilon$, such that:
\begin{equation}\label{ineq:3}
\begin{array}{ll}
\Vert D^\beta\phi(\varepsilon^{1/M}D)u\Vert_{\mathcal FL^1}&\le C_{r,\beta}\varepsilon^{-\left(\langle\beta,1/M\rangle-(r-\frac{n}{\mu_\ast q})\right)_+}\Vert u\Vert_{\mathcal FL^p_{M,r}},\\
&\mbox{if}\,\,\langle\beta,1/M\rangle\neq r-\frac{n}{\mu_\ast q}\,,\\
\\
\Vert D^\beta\phi(\varepsilon^{1/M}D)u\Vert_{\mathcal FL^1}&\le C_{r}\log^{1/q}(1+\varepsilon^{-1})\Vert u\Vert_{\mathcal FL^p_{M,r}}\,,\\
&\mbox{if}\,\,\,\langle\beta,1/M\rangle= r-\frac{n}{\mu_\ast q}\,,\\
\\
\Vert u-\phi(\varepsilon^{\frac1{M}}D)u\Vert_{\mathcal FL^{1}}&\le
C_r\varepsilon^{r-\frac{n}{\mu_\ast q}}\Vert u\Vert_{\mathcal FL^p_{r,M}}\,,\quad\forall\,u\in\mathcal FL^p_{r,M}\,.
\end{array}
\end{equation}
\end{itemize}
\end{lemma}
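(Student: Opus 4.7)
The three statements decouple: (i) and (ii) are direct consequences of the Fourier multiplier Proposition \ref{Fm_prop}, while (iii) follows from H\"older's inequality combined with an explicit quasi-homogeneous integration.

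For (i), I would first read off from $\operatorname{supp}\phi(\varepsilon^{1/M}\cdot)\subseteq\{|\xi|_M\le 2/\varepsilon\}$ the pointwise bound $|\xi_j|\le C\varepsilon^{-1/\mu_j}$ and hence $|\xi^\beta\phi(\varepsilon^{1/M}\xi)|\le C_\beta\varepsilon^{-\langle\beta,1/M\rangle}$. The Fourier multiplier symbol $\xi^\beta\phi(\varepsilon^{1/M}\xi)$ then satisfies the hypothesis of Proposition \ref{Fm_prop} with order $m=0$ and with this $\varepsilon$-dependent $L^\infty$-bound, which yields \eqref{ineq:1}. For (ii), on $\operatorname{supp}(1-\phi(\varepsilon^{1/M}\cdot))\subseteq\{|\xi|_M\ge 1/\varepsilon\}$ one has $\langle\xi\rangle_M\ge|\xi|_M\ge 1/\varepsilon$, so $\langle\xi\rangle_M^{-t}(1-\phi(\varepsilon^{1/M}\xi))$ is bounded in $L^\infty$ by $C_t\varepsilon^t$ for every $t\ge 0$; Proposition \ref{Fm_prop} applied with $m=t$ then gives \eqref{ineq:2}.

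For (iii), H\"older's inequality with conjugate exponents $p$ and $q$ yields
\begin{equation*}
\|D^\beta\phi(\varepsilon^{1/M}D)u\|_{\mathcal FL^1}=\int|\xi^\beta\phi(\varepsilon^{1/M}\xi)\hat u(\xi)|\,d\xi\le \|\langle\cdot\rangle_M^{-r}\xi^\beta\phi(\varepsilon^{1/M}\xi)\|_{L^q}\|u\|_{\mathcal FL^p_{r,M}},
\end{equation*}
and analogously for $u-\phi(\varepsilon^{1/M}D)u$ with $\xi^\beta$ replaced by $1$ and $\phi$ replaced by $1-\phi$. Using the elementary bound $|\xi^\beta|=\prod_j|\xi_j|^{\beta_j}\le |\xi|_M^{\langle\beta,1/M\rangle}\le\langle\xi\rangle_M^{\langle\beta,1/M\rangle}$, the problem reduces to controlling, as $\varepsilon\to 0^+$, the quasi-homogeneous integral
\begin{equation*}
J_\varepsilon:=\int_{|\xi|_M\le 2/\varepsilon}\langle\xi\rangle_M^{(\langle\beta,1/M\rangle-r)q}\,d\xi.
\end{equation*}
Invoking the polynomial-growth inequality \eqref{PG} to pass from $|\cdot|_M$ to the Euclidean size $\langle\cdot\rangle$ (so that the support condition translates into $\langle\xi\rangle\le C\varepsilon^{-1/\mu_\ast}$ and $\langle\xi\rangle_M^{-\alpha q}\le C\langle\xi\rangle^{-\alpha q\mu_\ast}$ for $\alpha:=r-\langle\beta,1/M\rangle\ge 0$), and then integrating in standard polar coordinates in $\mathbb R^n$, one finds that $J_\varepsilon$ stays bounded as $\varepsilon\to 0^+$ precisely when $\langle\beta,1/M\rangle<r-n/(\mu_\ast q)$, blows up algebraically like $\varepsilon^{-q(\langle\beta,1/M\rangle-r+n/(\mu_\ast q))}$ when $\langle\beta,1/M\rangle>r-n/(\mu_\ast q)$, and diverges logarithmically like $\log(1+\varepsilon^{-1})$ in the borderline case $\langle\beta,1/M\rangle=r-n/(\mu_\ast q)$. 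Taking $(\cdot)^{1/q}$ produces exactly the three bounds in \eqref{ineq:3}; the third estimate therein is handled in the same way but integrating over the complementary region $\{|\xi|_M\ge 1/\varepsilon\}$, where the assumption $r>n/(\mu_\ast q)$ guarantees tail integrability and yields the power $\varepsilon^{r-n/(\mu_\ast q)}$.

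The technical core is therefore the case analysis of $J_\varepsilon$: the main obstacle is the logarithmic borderline case, where one must identify the exact one-dimensional integral $\int_1^{C\varepsilon^{-1/\mu_\ast}}\rho^{-1}d\rho\sim\log(1+\varepsilon^{-1})$ before raising to the $1/q$-th power; the algebraic subcases and the reduction from $|\cdot|_M$ to $\langle\cdot\rangle$ via \eqref{PG} are routine.
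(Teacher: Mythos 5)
Parts (i) and (ii) coincide with the paper's argument: bound the multiplier symbol pointwise on its support, then use H\"older (or equivalently Proposition \ref{Fm_prop}). For part (iii), however, your reduction to Euclidean polar coordinates contains a gap that kills the algebraic case in the anisotropic setting.

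The issue is the inequality
\begin{equation*}
\langle\xi\rangle_M^{-\alpha q}\le C\langle\xi\rangle^{-\alpha q\mu_\ast}\,,\qquad \alpha:=r-\langle\beta,1/M\rangle\,.
\end{equation*}
You assert this "for $\alpha\ge 0$", implicitly treating that as the only regime. But the lemma applies to all $\beta\in\mathbb Z^n_+$ (and in its application in Proposition \ref{pro:2}, via Leibniz's rule, $\beta$ ranges over all multi-indices), so $\langle\beta,1/M\rangle>r$ and hence $\alpha<0$ does occur. When $\alpha<0$, the above inequality reverses; the upper bound one actually gets from \eqref{PG} is $\langle\xi\rangle_M^{|\alpha|q}\le C\langle\xi\rangle^{|\alpha|q\mu^\ast}$, with $\mu^\ast$ in place of $\mu_\ast$. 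Integrating over $\{\langle\xi\rangle\le C\varepsilon^{-1/\mu_\ast}\}$ in Euclidean polar coordinates then produces $J_\varepsilon^{1/q}\lesssim\varepsilon^{-\bigl(\frac{n}{\mu_\ast q}+|\alpha|\frac{\mu^\ast}{\mu_\ast}\bigr)}$, which is strictly weaker than the claimed $\varepsilon^{-\bigl(\frac{n}{\mu_\ast q}+|\alpha|\bigr)}$ whenever $\mu^\ast>\mu_\ast$ and $|\alpha|>0$. The passage to Euclidean coordinates destroys the quasi-homogeneous scaling you need.

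The fix is to avoid that reduction and estimate $J_\varepsilon$ directly in quasi-homogeneous fashion: decompose the region $\{|\xi|_M\le 2/\varepsilon\}$ into shells $\{|\xi|_M\sim 2^h\}$, $-1\le h\lesssim \log_2(1/\varepsilon)$, use the volume bound $\int_{\mathcal C^{M,K}_h}d\xi\lesssim 2^{hn/\mu_\ast}$, and sum the geometric/constant series. This is exactly what the paper's proof of (iii) does, except that it interleaves the shell decomposition with a dyadic decomposition of $\widehat u$ and then applies a discrete H\"older inequality; your "H\"older first, then estimate the $L^q$-norm of the symbol" order is a legitimate variant and would reach the same conclusion, but only if the symbol norm computation stays in the $|\cdot|_M$-geometry rather than going Euclidean. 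Your threshold analysis (bounded / logarithmic / algebraic dichotomy) and the treatment of $I-\phi(\varepsilon^{1/M}D)$ are otherwise correct once this is repaired.
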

\begin{proof}
{\rm (i)}: From the properties of function $\phi$, one can readily show that for any $\beta\in\mathbb Z^n_+$ there exists a constant $C_\beta>0$ such that:
\[
\vert\xi^{\beta}\phi(\varepsilon^{\frac1{M}}\xi)\vert\le C_\beta\varepsilon^{-\langle\beta,1/M\rangle}\,,\quad\forall\,\xi\in\mathbb R^n\,,\,\,\,\forall\,\varepsilon\in]0,1]\,.
\]
Then estimate \eqref{ineq:1} follows at once from H\"older's inequality.

\smallskip
{\rm (ii)}: Similarly as for {\rm (i)}, for $t\ge 0$, one can find a positive constant $C_t$ such that:
\[
\big\vert\langle\xi\rangle_M^{-t}(1-\phi(\varepsilon^{1/M}\xi))\big\vert\le C_t\varepsilon^t\,,\quad\forall\,\xi\in\mathbb R^n\,,\,\,\,\forall\,\varepsilon\in]0,1]\,,
\]
then estimate \eqref{ineq:2} follows once again from H\"older's inequality.

\smallskip
{\rm (iii)}: The extension of $D^\beta\phi(\varepsilon^{\frac1{M}}D)$ and $I-\phi(\varepsilon^{\frac1{M}}D)$ as linear bounded operators from $\mathcal FL^p_{r,M}$ to $\mathcal FL^1$ follows at once from a combination of the continuity properties stated in (i), (ii) and the fact that the space $\mathcal FL^p_{r,M}$ is imbedded into $\mathcal FL^1$ when $r>\frac{n}{\mu_\ast q}$.

\smallskip
For $\langle\beta,1/M\rangle<r-\frac{n}{\mu_\ast q}$, we directly have
\[
\Vert D^\beta\phi(\varepsilon^{1/M}D)u\Vert_{\mathcal FL^1}=\int\vert\xi^\beta\vert\phi(\varepsilon^{1/M}\xi)\vert\widehat{u}(\xi)\vert d\xi\,,
\]
and $0\le\phi\le 1$ implies $\vert\xi^\beta\vert\phi(\varepsilon^{1/M}\xi)\le\langle\xi\rangle_M^{\langle\beta,1/M\rangle}$. Combining the above and since $\langle\cdot\rangle^{\langle\beta,1/M\rangle-r}_M\in L^q$ as $r-\langle\beta,1/M\rangle>\frac{n}{\mu_\ast q}$, H\"older's inequality yields
\[
\Vert D^\beta\phi(\varepsilon^{1/M}D)u\Vert_{\mathcal FL^1}\le C_{r,\beta,p}\Vert u\Vert_{\mathcal FL^p_{r,M}}\,,
\]
where $C_{r,\beta,p}:=\left(\int\frac1{\langle\xi\rangle_M^{(r-\langle\beta,1/M\rangle)q}}d\xi\right)^{1/q}$. The above is exactly $\eqref{ineq:3}_1$ for $\langle\beta,1/M\rangle <r-\frac{n}{\mu_\ast q}$.
\newline
For $\langle\beta,1/M\rangle>r-\frac{n}{\mu_\ast q}$, we first write
\begin{equation}\label{ineq:3_1}
\Vert D^\beta\phi(\varepsilon^{1/M}D)u\Vert_{\mathcal FL^1}=\left\Vert\xi^\beta\!\!\sum\limits_{h=-1}^{+\infty}\phi(\varepsilon^{1/M}\xi)\widehat u_h\right\Vert_{L^1}\,,
\end{equation}
where, for every integer $h\ge -1$, we set $\widehat u_h=\varphi_h\widehat u$, being $\left\{\varphi_h\right\}_{h=-1}^{\infty}$ the dyadic partition of unity introduced in Sect. \ref{FL-sp_sct}.
\newline
Since $\phi(\varepsilon^{1/M}\xi)\widehat{u}_h\equiv 0$, as long as the integer $h\ge 0$ satisfies $2\varepsilon^{-1}<\frac1{K}2^{h-1}$ (that is $h>\log_2(4K/\varepsilon)$), cf. \eqref{dyad_cov}, \eqref{cond_phi}, from \eqref{ineq:3_1}, $0\le\phi\le 1$,
\begin{equation}\label{stima_xi_beta}
\vert\xi^\beta\vert\le\vert\xi\vert_M^{\langle\beta,1/M\rangle}\le C_{K,\beta}2^{h\langle\beta,1/M\rangle}\,,\quad\mbox{for}\,\,\xi\in\mathcal C^{M,K}_h\,,
\end{equation}
with a constant $C_{K,\beta}>0$ independent of $h$, and H\"older's inequality, it follows
\begin{equation}\label{ineq:3_2}
\begin{split}
\Vert D^\beta &\phi(\varepsilon^{1/M}D)u\Vert_{\mathcal FL^1}\le\sum\limits_{h=-1}^{\left[\log_2(4K/\varepsilon)\right]}\int\limits_{\mathcal C^{M,k}_h}\vert\xi^\beta\vert\phi(\varepsilon^{1/M}\xi)\vert\widehat u_h(\xi)\vert d\xi\\
&\le C_{K,\beta}\sum\limits_{h=-1}^{\left[\log_2(4K/\varepsilon)\right]}2^{h\langle\beta,1/M\rangle}\int\limits_{\mathcal C^{M,k}_h}\vert\widehat u_h(\xi)\vert d\xi\\
&=C_{K,\beta}\sum\limits_{h=-1}^{\left[\log_2(4K/\varepsilon)\right]}2^{h\sigma}\int\limits_{\mathcal C^{M,k}_h}2^{-h\frac{n}{\mu_\ast q}}2^{hr}\vert\widehat u_h(\xi)\vert d\xi\\
&\le C_{K,\beta}\sum\limits_{h=-1}^{\left[\log_2(4K/\varepsilon)\right]}2^{h\sigma}\left(\int\limits_{\mathcal C^{M,k}_h}2^{-h\frac{n}{\mu_\ast}}\right)^{1/q}\Vert 2^{hr}\widehat{u}_h\Vert_{L^p}\\
&\le C_{K,\beta,n,p}\sum\limits_{h=-1}^{\left[\log_2(4K/\varepsilon)\right]}2^{h\sigma}\Vert 2^{hr}\widehat{u}_h\Vert_{L^p}\,,
\end{split}
\end{equation}
where we used $\int\limits_{\mathcal C^{M,k}_h}d\xi\le C_{\ast,K,n}2^{h\frac{n}{\mu_\ast}}$, for a constant $C_{\ast,K,n}$ independent of $h$, and it is set $C_{K,\beta,n,p}:=C_{K,\beta}C_{\ast,K,n}^{1/q}$ and $\sigma:=\langle\beta,1/M\rangle-(r-\frac{n}{\mu_\ast q})$. Hence, we use discrete H\"older's inequality with conjugate exponents $(p,q)$ and the characterization of $M-$homogeneous Fourier Lebesgue spaces provided by Proposition \ref{prop_1} to end up with
\begin{equation}\label{ineq:3_3}
\begin{split}
\Vert D^\beta &\phi(\varepsilon^{1/M}D)u\Vert_{\mathcal FL^1}\le C_{K,\beta,n,p}\left(\sum\limits_{h=-1}^{\left[\log_2(4K/\varepsilon)\right]}2^{h\sigma q}\right)^{1/q}\Vert u\Vert_{\mathcal FL^p_{r,M}}\,,
\end{split}
\end{equation}
and
\begin{equation}\label{ineq:3_4}
\begin{array}{ll}
\sum\limits_{h=-1}^{\left[\log_2(4K/\varepsilon)\right]}2^{h\sigma q}&=2^{\sigma q\left(\left[\log_2(4K/\varepsilon)\right]\right)}\sum\limits_{h=-1}^{\left[\log_2(4K/\varepsilon)\right]}2^{-\sigma q(\left[\log_2(4K/\varepsilon)\right]-h)}\\
&\le (4K/\varepsilon)^{\sigma q}C_{\sigma,q}=C_{K,\sigma,q}\varepsilon^{-\sigma q}\,,
\end{array}
\end{equation}
where $C_{\sigma,q}:=\sum\limits_{j\ge 0}2^{-\sigma qj}$ is convergent, as $\sigma>0$, and $C_{K,\sigma,q}:=4KC_{\sigma,q}$ is independent of $\varepsilon$. Inequality $\eqref{ineq:3}_1$ for $\langle\beta,1/M\rangle>r-\frac{n}{\mu_\ast q}$ follows from combining \eqref{ineq:3_3}, \eqref{ineq:3_4}.

\smallskip
To prove $\eqref{ineq:3}_2$, we repeat the arguments leading to \eqref{ineq:3_1}--\eqref{ineq:3_3} where $\langle\beta,1/M\rangle=r-\frac{n}{\mu_\ast q}$ (that is $\sigma=0$), use discrete H\"older's inequality and Proposition \ref{prop_1}, to get:
\begin{equation}\label{ineq:3_5}
\begin{split}
\Vert D^\beta \phi(\varepsilon^{1/M}D)u\Vert_{\mathcal FL^1}&\le C_{K,r,n,p}\!\!\!\sum\limits_{h=-1}^{\left[\log_2(4K/\varepsilon)\right]}\Vert 2^{hr}\widehat{u}_h
\Vert_{L^p}\\
&\le\tilde C_{K,r,n,p}\left(\sum\limits_{h=-1}^{\left[\log_2(4K/\varepsilon)\right]}1\right)^{1/q}\!\!\!\!\!\!\Vert u\Vert_{\mathcal FL^p_{r,M}}\\
&=\tilde C_{K,r,n,p}\left(2+\left[\log_2(4K/\varepsilon)\right]\right)^{1/q}\Vert u\Vert_{\mathcal FL^p_{r,M}}\\
&\le C^\prime_{K,r,n,p}\log^{1/q}(1+\varepsilon^{-1})\Vert u\Vert_{\mathcal FL^p_{r,M}}\,.
\end{split}
\end{equation}

\smallskip
The proof of inequality $\eqref{ineq:3}_3$ follows along the same arguments used above. We resort once again to Proposition \ref{prop_1} and H\"older's inequality to get
\begin{equation*}
\begin{split}
\Vert(I-\phi(\varepsilon^{1/M}D))u\Vert_{\mathcal FL^1}&=\Vert(1-\phi(\varepsilon^{1/M}\cdot))\widehat u\Vert_{L^1}=\left\Vert(1-\phi(\varepsilon^{1/M}\cdot))\sum\limits_{h=-1}^{\infty}\widehat u_h\right\Vert_{L^1}\\
&\le\sum\limits_{h>\log_2\left(\frac{1}{2K\varepsilon}\right)}\Vert(1-\phi(\varepsilon^{1/M}\cdot))\widehat u_h\Vert_{L^1}\\
&\le\sum\limits_{h>\log_2\left(\frac{1}{2K\varepsilon}\right)}\left\Vert\frac{(1-\phi(\varepsilon^{1/M}\cdot))\chi_h}{\langle\cdot\rangle_M^r}\right\Vert_{L^q}\Vert\langle\cdot\rangle_M^r\widehat u_h\Vert_{L^p}
\end{split}
\end{equation*}
where for an integer $h\ge -1$, $\chi_h$ is the characteristic function of $\mathcal C^{M,K}_h$ and we use  $(1-\phi(\varepsilon^{1/M}\cdot))\varphi_h\equiv 0$ for $K2^{h+1}\le 1/\varepsilon$, cf. \eqref{dyad_cov}, \eqref{cond_phi}. Arguing as in the proof of Proposition \ref{prop_1} yields
\[
\Vert\langle\cdot\rangle_M^r\widehat u_h\Vert_{L^p}\le C_{r,p}2^{rh}\Vert\widehat u_h\Vert_{L^p}\,,\quad\forall\,h\ge -1\,,
\]
with positive constant $C_{r,p}$ depending only on $r$ and $p$. Using again the properties of functions $\phi$ and $\varphi_h$'s, we also get, for any $h\geq -1$, 
\[
\begin{split}
\left\Vert\frac{(1-\phi(\varepsilon^{1/M}\cdot))\chi_h}{\langle\cdot\rangle_M^r}\right\Vert_{L^q}^q &=\int_{\mathcal C^{M,K}_h}\left\vert\frac{(1-\phi(\varepsilon^{1/M}\xi))}{\langle\xi\rangle_M^r}\right\vert^q d\xi\le \int_{\mathcal C^{M,K}_h}\frac{1}{\langle\xi\rangle_M^{rq}}d\xi\\
&\le C_{r,q}2^{-rhq}\int_{\mathcal C^{M,K}_h}d\xi\le C_{r,p,\mu_\ast,K,n}2^{h(-rq+n/\mu_\ast)}
\end{split}
\]
(with obvious modifications in the case of $q=\infty$, that is $p=1$); here and later on, $C_{r,p, \mu_\ast, K, n}$ will denote some positive constant, depending only on $r$, $p$, $\mu_\ast$, $K$ and the dimension $n$, that may be different from an occurrence to another.

\smallskip
Using the above inequalities in the previous estimate of the $L^1-$norm of $(1-\phi(\varepsilon^{1/M}\cdot))\widehat u$, together with H\"older's inequality and Proposition \ref{prop_1}, we end up with
\[
\begin{split}
\Vert(I- &\phi(\varepsilon^{1/M}D))u\Vert_{\mathcal FL^1}\le C_{r,p,\mu_\ast,K,n}\sum\limits_{h>\log_2\left(\frac{1}{2K\varepsilon}\right)}2^{h(-r+\frac{n}{\mu_\ast q})}\,2^{rh}\Vert\widehat u_h\Vert_{L^p}\\
&\le C_{r,p,\mu_\ast,K,n}\left(\sum\limits_{h>\log_2\left(\frac{1}{2K\varepsilon}\right)}2^{h(-rq+n/\mu_\ast)}\right)^{1/q}\left(\sum\limits_{h\ge -1}2^{rhp}\Vert\widehat u_h\Vert_{L^p}^p\right)^{1/p}\\
&\le C_{r,p,\mu_\ast,K,n}\left(\frac{1}{2K\varepsilon}\right)^{-r+\frac{n}{\mu_\ast q}}\left(\sum\limits_{\ell>0}2^{\ell(-rq+n/\mu_\ast)}\right)^{1/q}\Vert u\Vert_{\mathcal FL^p_{M,r}}\\
&\le C_{r,p,\mu_\ast,K,n}\varepsilon^{r-\frac{n}{\mu_\ast q}}\Vert u\Vert_{\mathcal FL^p_{M,r}}\,,
\end{split}
\]
since the geometric series $\sum\limits_{\ell>0}2^{\ell(-rq+n/\mu_\ast)}$ is convergent for $r>\frac{n}{\mu_\ast q}$.
\end{proof}
\begin{remark}\label{rmk:i-iii_1}
{\rm As already noticed in the proof of the above Lemma \ref{le:2}, for $r>\frac{n}{\mu_\ast q}$ the continuity of the operator $D^\beta\phi(\varepsilon^{\frac1{M}}D)$ from $\mathcal FL^p_{r,M}$ to $\mathcal FL^1$ readily follows from the continuity of the same operator in $\mathcal FL^p_{r,M}$ and the validity of the continuous imbedding of $\mathcal FL^p_{r,M}$ into $\mathcal FL^1$; combining the above with the inequality \eqref{ineq:1} also gives the following continuity estimate
\[
\Vert
D^{\beta}\phi(\varepsilon^{\frac1{M}}D)u\Vert_{\mathcal FL^1}\le C_{\beta}\varepsilon^{-\langle\beta,\frac1{M}\rangle}\Vert
u\Vert_{\mathcal FL^p_{r,M}}\,,\quad\forall\,u\in\mathcal FL^p_{r,M}\,.
\]
Notice however that inequalities $\eqref{ineq:3}_{1,2}$ provide an improvement of such a continuity estimate above, as they give a sharper control of the norm of $D^{\beta}\phi(\varepsilon^{\frac1{M}}D)$, with respect to $\varepsilon$, as a linear bounded operator in $\mathcal L(\mathcal FL^p_{r,M};\mathcal FL^1)$.}
\end{remark}
\begin{remark}\label{rmk:i-iii_2}
{\rm In the case of $r>\frac{n}{\mu_\ast q}$, applying statement (ii) of Lemma \ref{le:2} with $0\le t<r-\frac{n}{\mu_\ast q}$ and taking account of $\mathcal FL^p_{r,M}\subset\mathcal FL^1$, with continuous imbedding, yields that
\begin{equation}
\Vert u-\phi(\varepsilon^{\frac1{M}}D)u\Vert_{\mathcal FL^{1}}\le
C_t\varepsilon^{t}\Vert u\Vert_{\mathcal FL^p_{r,M}}\,,\quad\forall\,u\in\mathcal FL^p_{r,M},\label{ineq:4}
\end{equation}
holds true with some positive constant $C_t$, independent of $\varepsilon$. Notice, however, that the endpoint case $t=r-\frac{n}{\mu_\ast q}$ (corresponding to statement (iii) of Lemma \ref{le:2}) cannot be reached by treating it along the same arguments used to prove statement (ii) above; indeed, in general, $\mathcal FL^p_{\frac{n}{\mu_\ast q},M}$ is not imbedded in $\mathcal FL^1$ (that is $\langle\cdot\rangle^{-\frac{n}{\mu_\ast q}}\notin L^q$).
}
\end{remark}

\noindent
Let $a(x,\xi)$ belong to $\mathcal FL^p_{r,M}S^{m}_M$ and take $\delta\in]0,1]$; we
define $a^{\#}(x,\xi)$ by the following
\begin{equation}\label{a_sharp}
a^{\#}(x,\xi):=\sum\limits_{h=-1}^{\infty}\phi(2^{-\frac{h\delta}{M}}D_x)a(x,\xi)\varphi_h(\xi)\,.
\end{equation}
We also set
\begin{equation}\label{a_natural}
a^{\natural}(x,\xi):=a(x,\xi)-a^{\#}(x,\xi)\,.
\end{equation}
As a consequence of Lemma \ref{le:2}, one can prove the following result; this will play a fundamental role in the analysis made in Sect. \ref{thm1.4_sct}.
\begin{proposition}\label{pro:2}
For $r>\frac{n}{\mu_\ast q}$ and $m\in\mathbb R$, let $a(x,\xi)\in\mathcal FL^p_{r,M}S^{m}_M$ and take an arbitrary
$\delta\in]0,1]$. Then
$$
a^{\#}(x,\xi)\in S^m_{M,\delta,\kappa},
$$
where $\kappa=r-\frac{n}{\mu_\ast q}$; moreover $a^\natural(x,\xi)\in \mathcal FL^p_{r,M}S^{m-\delta\left(r-\frac{n}{\mu_\ast q}\right)}_{M,\delta}$.
\end{proposition}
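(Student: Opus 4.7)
The strategy is to treat $a^\#$ and $a^\natural$ separately, applying in each case Lemma~\ref{le:2} with $\varepsilon=2^{-h\delta}$, and exploiting that $\langle\xi\rangle_M\sim 2^h$ on $\supp\,\varphi_h$ together with the bounded-overlap property \eqref{overlap_cond}, so that only finitely many summands contribute at any given $\xi$. For $a^\#$, every term in \eqref{a_sharp} is smooth because $\phi(2^{-h\delta/M}D_x)a(\cdot,\xi)$ is band-limited in $x$, so I differentiate term by term: $\partial^\beta_x$ commutes with the $x$-Fourier multiplier $\phi(2^{-h\delta/M}D_x)$ producing the full multiplier $i^{|\beta|}D^\beta\phi(2^{-h\delta/M}D_x)$, and a Leibniz expansion of $\partial^\alpha_\xi$ yields a sum of terms
\[
\bigl[D^\beta\phi(2^{-h\delta/M}D_x)\partial^{\alpha'}_\xi a(x,\xi)\bigr]\,\partial^{\alpha''}_\xi\varphi_h(\xi),\qquad \alpha'+\alpha''=\alpha.
\]
The embedding $\Vert\cdot\Vert_{L^\infty}\le C\Vert\cdot\Vert_{\mathcal FL^1}$ combined with Lemma~\ref{le:2}(iii)$_1$ bounds the first factor pointwise by $C\,2^{h\delta(\langle\beta,1/M\rangle-\kappa)_+}\langle\xi\rangle_M^{m-\langle\alpha',1/M\rangle}$ (with the $\log^{1/q}(1+2^{h\delta})$ variant from (iii)$_2$ in the endpoint case $\langle\beta,1/M\rangle=\kappa$), while Remark~\ref{rmk:partition_unity} gives $|\partial^{\alpha''}_\xi\varphi_h(\xi)|\le C\langle\xi\rangle_M^{-\langle\alpha'',1/M\rangle}$. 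Replacing $2^h$ by $\langle\xi\rangle_M$ inside $\supp\,\varphi_h$ and summing over the finitely many nonzero $h$ produces exactly \eqref{sm-sym_k_est_1}--\eqref{sm-sym_k_est_2}, hence $a^\#\in S^m_{M,\delta,\kappa}$.

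For $a^\natural$, I use $\sum_{h\ge -1}\varphi_h\equiv 1$ to rewrite
\[
a^\natural(x,\xi)=\sum_{h=-1}^{+\infty}\bigl[(I-\phi(2^{-h\delta/M}D_x))a(x,\xi)\bigr]\varphi_h(\xi)
\]
and again apply Leibniz in $\xi$. Lemma~\ref{le:2}(iii)$_3$ provides the crucial $\mathcal FL^1$-gain
\[
\Vert(I-\phi(2^{-h\delta/M}D_x))\partial^{\alpha'}_\xi a(\cdot,\xi)\Vert_{\mathcal FL^1}\le C\,2^{-h\delta\kappa}\langle\xi\rangle_M^{m-\langle\alpha',1/M\rangle},
\]
while the trivial inequality $|1-\phi|\le 1$ yields the uniform bound $C\langle\xi\rangle_M^{m-\langle\alpha',1/M\rangle}$ in the $\mathcal FL^p_{r,M}$-norm for the same quantity. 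Multiplying by $|\partial^{\alpha''}_\xi\varphi_h(\xi)|\le C\langle\xi\rangle_M^{-\langle\alpha'',1/M\rangle}$ and summing over the finitely many $h$ with $\xi\in\supp\,\varphi_h$ (where $2^h\sim\langle\xi\rangle_M$) produces
\[
\Vert\partial^\alpha_\xi a^\natural(\cdot,\xi)\Vert_{\mathcal FL^1}\le C\langle\xi\rangle_M^{(m-\delta\kappa)-\langle\alpha,1/M\rangle},\quad \Vert\partial^\alpha_\xi a^\natural(\cdot,\xi)\Vert_{\mathcal FL^p_{r,M}}\le C\langle\xi\rangle_M^{m-\langle\alpha,1/M\rangle}.
\]
Since $m-\langle\alpha,1/M\rangle=(m-\delta\kappa)-\langle\alpha,1/M\rangle+\delta(r-\tfrac{n}{\mu_\ast q})$, these are exactly the estimates \eqref{FL_est}--\eqref{X_est} required for $a^\natural\in\mathcal FL^p_{r,M}S^{m-\delta(r-n/(\mu_\ast q))}_{M,\delta}$.

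The main obstacle will be the borderline case $\langle\beta,1/M\rangle=\kappa$ in the analysis of $a^\#$: Lemma~\ref{le:2}(iii)$_2$ produces the factor $\log^{1/q}(1+2^{h\delta})$, which must be controlled by a constant multiple of $\log(1+\langle\xi\rangle_M^\delta)$ on $\supp\,\varphi_h$. This follows for large $h$ from $2^{h\delta}\sim\langle\xi\rangle_M^\delta$ together with $(\log y)^{1/q}\le \log y$ once $\log y\ge 1$, while the bounded range of small $h$ is absorbed into the constant. All remaining work is a careful multi-index bookkeeping of the Leibniz expansions and the comparison $\langle\xi\rangle_M\sim 2^h$ inside each dyadic shell.
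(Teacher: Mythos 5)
Your argument reproduces the paper's proof: expand $D^\beta_x D^\alpha_\xi a^\#$ (resp.\ $D^\alpha_\xi a^\natural$) via Leibniz and the finite-overlap property of the dyadic partition, apply Lemma~\ref{le:2}(iii) with $\varepsilon=2^{-h\delta}$ together with~\eqref{X_est} and~\eqref{der_phi_h}, replace $2^h$ by $\langle\xi\rangle_M$ on each shell, and use the imbedding of $\mathcal FL^1$ into bounded continuous functions to convert the $\mathcal FL^1$ estimate into the required pointwise bounds \eqref{sm-sym_k_est_1}--\eqref{sm-sym_k_est_2}. Your handling of the endpoint $\langle\beta,1/M\rangle=\kappa$ via $\log^{1/q}(1+\cdot)\le\log(1+\cdot)$ and absorption of the small-$h$ range is also exactly what the paper does, so the proposal is correct and follows the same route.
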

\begin{proof}
For arbitrary $\alpha,\beta\in\mathbb{Z}^n_+$, from Leibniz's rule we get
\begin{equation}\label{a_sharp_1}
\begin{split}
\Vert
D^{\beta}_xD^{\alpha}_{\xi} &a^{\#}(\cdot,\xi)\Vert_{\mathcal FL^1}\le\sum\limits_{\nu\le\alpha}\binom{\alpha}{\nu}\sum\limits_{h=-1}^{+\infty}\Vert
D^{\beta}_x\phi(2^{-\frac{\delta h}{M}}D)D^{\alpha-\nu}_{\xi}a(\cdot,\xi)\Vert_{\mathcal FL^1}\vert
D^{\nu}_{\xi}\varphi_h(\xi)\vert\\
&=\sum\limits_{\nu\le\alpha}\binom{\alpha}{\nu}\sum\limits_{h=\tilde h_0}^{h_0+N_0}\Vert
D^{\beta}_x\phi(2^{-\frac{\delta h}{M}}D)D^{\alpha-\nu}_{\xi}a(\cdot,\xi)\Vert_{\mathcal FL^1}\vert
D^{\nu}_{\xi}\varphi_h(\xi)\vert\,,
\end{split}
\end{equation}
where, for every $\xi\in\mathbb R^n$, the integers $N_0>0$ (independent of $\xi$), $h_0=h_0(\xi)\ge -1$ and $\tilde h_0=\tilde h_0(\xi)$ are the same as considered in \eqref{overlap_cond}, \eqref{conv_finite}.
\newline
On the other hand, because $r>\frac{r}{\mu_\ast q}$, applying to $u=D^{\alpha-\nu}_{\xi}a(\cdot,\xi)$ the inequalities $\eqref{ineq:3}_{1,2}$  with $\varepsilon=2^{-h\delta}$ and using estimates \eqref{X_est} and \eqref{der_phi_h}, we get for $h\ge -1$ and $\xi\in\mathcal C^{M,K}_h$
\[
\begin{split}
\Vert
D^{\beta}_x &\phi(2^{-\frac{\delta h}{M}}D)D^{\alpha-\nu}_{\xi}a(\cdot,\xi)\Vert_{\mathcal FL^1}\le C_{r,\beta}2^{h\delta\left(\langle\beta,1/M\rangle-\kappa\right)_+}\Vert D^{\alpha-\nu}_\xi a(\cdot,\xi)\Vert_{\mathcal FL^p_{M,r}}\\
&\le C_{r,\alpha,\beta,\nu}\langle\xi\rangle_M^{m-\langle\alpha-\nu,1/M\rangle+\delta\left(\langle\beta,1/M\rangle-\kappa\right)_+}\,,\quad\mbox{if}\,\,\,\langle\beta,1/M\rangle\neq\kappa\,,
\end{split}
\]

\[
\begin{split}
\Vert
D^{\beta}_x &\phi(2^{-\frac{\delta h}{M}}D)D^{\alpha-\nu}_{\xi}a(\cdot,\xi)\Vert_{\mathcal FL^1}\le C_{r}\log^{1/q}(1+2^{h\delta})\Vert D^{\alpha-\nu}_\xi a(\cdot,\xi)\Vert_{\mathcal FL^p_{M,r}}\\
&\le C_{r,\alpha,\nu}\log^{1/q}(1+\langle\xi\rangle_M^{\delta})\langle\xi\rangle_M^{m-\langle\alpha-\nu,1/M\rangle}\\
&\le C_{r,\alpha,\nu}\log(1+\langle\xi\rangle_M^{\delta})\langle\xi\rangle_M^{m-\langle\alpha-\nu,1/M\rangle}\,,\quad\mbox{if}\,\,\,\langle\beta,1/M\rangle=\kappa\,,
\end{split}
\]
and
\[
\vert D^\nu\varphi_h(\xi)\vert\le C_\nu\langle\xi\rangle_M^{-\langle\nu,1/M\rangle}\,,
\]
with suitable positive constants $C_{r,\beta}$, $C_{r,\alpha,\beta,\nu}$, $C_r$, $C_{r,\alpha,\nu}$, $C_\nu$ independent of $h$.
Then summing the above inequalities over all $h$'s such that $\tilde h_0\le h\le h_0+N_0$, from \eqref{a_sharp_1} it follows that
\begin{equation}\label{a_sharp_2}
\begin{split}
&\Vert
D^{\beta}_xD^{\alpha}_{\xi} a^{\#}(\cdot,\xi)\Vert_{\mathcal FL^1}\le C_{\alpha,\beta}\langle\xi\rangle_M^{m-\langle\alpha,1/M\rangle+\delta\left(\langle\beta,1/M\rangle-\kappa\right)_+}\,,\quad\mbox{if}\,\,\,\langle\beta,1/M\rangle\neq\kappa\,,\\
&\Vert
D^{\beta}_xD^{\alpha}_{\xi} a^{\#}(\cdot,\xi)\Vert_{\mathcal FL^1}\le C_{\alpha,\beta}\langle\xi\rangle_M^{m-\langle\alpha,1/M\rangle}\log(1+\langle\xi\rangle_M^{\delta})\,,\quad\mbox{if}\,\,\,\langle\beta,1/M\rangle=\kappa\,,
\end{split}
\end{equation}
from which $a^{\#}(x,\xi)\in S^m_{M,\delta,\kappa}$ follows at once, recalling that $\mathcal FL^1$ is imbedded in the space of bounded continuous functions in $\mathbb R^n$.

\smallskip
As regards to symbol $a^\natural(x,\xi)$ defined in \eqref{a_natural}, applying inequalities \eqref{ineq:2} with $t=0$, together with estimates \eqref{X_est} and \eqref{der_phi_h}, and using similar arguments as above, for all integers $h\ge -1$ and $\xi\in\mathcal C^{M,K}_h$ we find
\[
\begin{split}
\Vert
D^{\alpha}_{\xi}a^{\natural}(\cdot,\xi)&\Vert_{\mathcal FL^p_{r,M}}\\
&\le\sum\limits_{\nu\le\alpha}\sum\limits_{h=\tilde h_0}^{h_0+N_0}\binom{\alpha}{\nu}\Vert (I-\phi(2^{-h\delta/M}D))(D^{\alpha-\nu}_\xi a(\cdot,\xi))\Vert_{\mathcal FL^p_{r,M}}\vert D^\nu\varphi_h(\xi)\vert\\
&\le\sum\limits_{\nu\le\alpha}\sum\limits_{h=\tilde h_0}^{h_0+N_0}C_{\alpha,\nu}\Vert D^{\alpha-\nu}_\xi a(\cdot,\xi)\Vert_{\mathcal FL^p_{r,M}}\vert D^\nu\varphi_h(\xi)\vert\\
&\le\sum\limits_{\nu\le\alpha}\sum\limits_{h=\tilde h_0}^{h_0+N_0}C^\prime_{\alpha,\nu}\langle\xi\rangle_M^{m-\langle\alpha-\nu,1/M\rangle}\langle\xi\rangle_M^{-\langle\nu,1/M\rangle}\le C_\alpha\langle\xi\rangle_M^{m-\langle\alpha,1/M\rangle}\,,
\end{split}
\]
with positive constants $C_{\alpha,\nu}$, $C^\prime_{\alpha,\nu}$,  $C_\alpha$ independent of $h$; similarly, replacing \eqref{X_est} with \eqref{FL_est} and \eqref{ineq:2} with $\eqref{ineq:3}_3$ (with $\varepsilon=2^{-h\delta}$) in the above estimates, we find
\[
\begin{split}
\Vert
D^{\alpha}_{\xi}a^{\natural}(\cdot,\xi)\Vert_{\mathcal FL^1}&\le\sum\limits_{\nu\le\alpha}\sum\limits_{h=\tilde h_0}^{h_0+N_0}\binom{\alpha}{\nu}\Vert (I-\phi(2^{-h\delta/M}D))(D^{\alpha-\nu}_\xi a(\cdot,\xi))\Vert_{\mathcal FL^1}\vert D^\nu\varphi_h(\xi)\vert\\
&\le\sum\limits_{\nu\le\alpha}\sum\limits_{h=\tilde h_0}^{h_0+N_0}C_{\alpha,\nu} 2^{-h\delta\left(r-\frac{n}{\mu_\ast q}\right)}\Vert D^{\alpha-\nu}_\xi a(\cdot,\xi)\Vert_{\mathcal FL^p_{r,M}}\vert D^\nu\varphi_h(\xi)\vert\\
&\le\sum\limits_{\nu\le\alpha}\sum\limits_{h=\tilde h_0}^{h_0+N_0}C^\prime_{\alpha,\nu}\langle\xi\rangle_M^{-\delta\left(r-\frac{n}{\mu_\ast q}\right)}\langle\xi\rangle_M^{m-\langle\alpha-\nu,1/M\rangle}\langle\xi\rangle_M^{-\langle\nu,1/M\rangle}\\
&\le C_\alpha\langle\xi\rangle_M^{m-\delta\left(r-\frac{n}{\mu_\ast q}\right)-\langle\alpha,1/M\rangle}\,,
\end{split}
\]
where the numerical constants involved above are independent of $h$. The above inequalities yields $a^\natural(x,\xi)\in\mathcal FL^p_{r,M}S^{m-\left(r-\frac{n}{\mu_\ast q}\right)}_{M,\delta}$, because of the arbitrariness of $h$ and that the $\mathcal C^{M,K}_h$'s cover $\mathbb R^n$.
\end{proof}
\section{Microlocal properties}\label{propation_sing_FL_sct}

In order to study the microlocal propagation of weighted Fourier Lebesgue singularities for PDEs, this section is devoted to define local/microlocal versions of $M-$Fourier Lebesgue spaces as well as $M-$homogeneous smooth symbols previously introduced in Sects. \ref{FL-sp_sct}, \ref{smooth_symb_sct}, and to collect some basic tools and a few results needed at this purpose.

%==========================================================================
\subsection{Local and microlocal function spaces}\label{mcl_FL_sp_sct}
While the main focus of this paper is on $M-$homogeneous Fourier Lebesgue spaces, in this section we define general scales of function spaces, where the microlocal propagation of singularities of pseudodifferential operators with $M-$homogeneous symbols, as defined in Sect. \ref{smooth_symb_sct}, will be then studied.

\smallskip
Let us consider a one-parameter family $\{\mathcal X_s\}_{s\in\mathbb R}$ of Banach spaces $\mathcal X_s$, $s\in\mathbb R$, such that
\begin{equation}\label{emb_Xs}
\mathcal S(\mathbb R^n)\subset\mathcal X_t\subset\mathcal X_s\subset\mathcal S^\prime(\mathbb R^n)\,,\quad\mbox{with continuous embedding}\,,
\end{equation}
for arbitrary $s<t$. Following Taylor \cite{MT-NLPDE}, we say that $\{\mathcal X_s\}_{s\in\mathbb R}$ is a {\em $M-$microlocal scale} provided that there exists a constant $\kappa_0>0$ such that for all $m\in\mathbb R$, $\delta\in[0,1[$, $\kappa>\kappa_0$ and $a(x,\xi)\in S^m_{M,\delta,\kappa}$ satisfying \eqref{localization} for some compact $\mathcal K\subset\mathbb R^n$, the pseudodifferential operator $a(x,D)$ extends to a linear bounded operator
\begin{equation}\label{cont_Xs}
a(x,D):\mathcal X_{s+m}\rightarrow \mathcal X_s\,,\quad\forall\,s\in\mathbb R\,.
\end{equation}
In view of Theorem \ref{FL_sym_cor}, it is clear that for every $p\in[1,+\infty]$ the $M-$homogeneous Fourier Lebesgue spaces $\{\mathcal FL^p_{s,M}\}_{s\in\mathbb R}$ constitute a $M-$microlocal scale, according to definition above; in this case the threshold $\kappa_0$ from \eqref{cont_Xs} is given by $\kappa_0=\left[n/\mu_\ast\right]+1$. Other examples of $M-$microlocal spaces are provided by {\em $M-$homogeneous Sobolev} and {\em H\"older} spaces studied in \cite{GM-09}\footnote{Actually for $M-$homogeneous Sobolev and H\"older spaces, the continuity property \eqref{cont_Xs} is extended to all pseudodifferential operators with symbol in $S^m_{M,\delta}$, without the need of the more restrictive decay conditions in Definition \ref{sm-sym_k_dfn} and of the {\em locality} condition \eqref{localization}, see \cite[Theorem 3.3 and Corollary 3.4]{GM-09}.}.

\smallskip
In order to allow the microlocal analysis performed in subsequent Sect. \ref{mcl_symb_sct}, the following local and microlocal counterparts of spaces $\mathcal X_s$, $s\in\mathbb R$, are given.
\begin{definition}\label{loc_mcl_FL}
Let $s\in\mathbb R$, $x_0\in\mathbb R^n$ and $\xi^0\in\mathbb R^n\setminus\{0\}$. We say that a distribution $u\in\mathcal S^\prime(\mathbb R^n)$ belongs to the local space $\mathcal X_{s,{\rm loc}}(x_0)$ if there exists a function $\phi\in C^\infty_0(\mathbb R^n)$, satisfying $\phi(x_0)\neq 0$, such that
\begin{equation}\label{Xs_loc}
\phi u\in\mathcal X_s\,.
\end{equation}
We say that $u\in\mathcal S^\prime(\mathbb R^n)$ belongs to the microlocal space $\mathcal X_{s, {\rm mcl}}(x_0,\xi^0)$ provided that there exist a function $\phi\in C^\infty_0(\mathbb R^n)$, satisfying $\phi(x_0)\neq 0$, and a symbol $\psi(\xi)\in S^0_M$, satisfying $\psi(\xi)\equiv 1$ on $\Gamma_M\cap\{\vert\xi\vert_M>\varepsilon_0\}$ for suitable $M-$conic neighborhood $\Gamma_M\subset\mathbb R^n\setminus\{0\}$ of $\xi^0$ and $0<\varepsilon_0<\vert\xi^0\vert_M$, such that
\begin{equation}\label{Xs_mcl}
\psi(D)(\phi u)\in\mathcal X_s\,.
\end{equation}
Under the same assumptions as above, we also write
\begin{equation}\label{sing_WF_Xs}
x_0\notin\mathcal X_s-{\rm singsupp}\,(u)\quad\mbox{and}\quad(x_0,\xi^0)\notin WF_{\mathcal X_s}(u)
\end{equation}
respectively.
\end{definition}
In the case $\mathcal X^s\equiv\mathcal FL^p_{s,M}$, it is clear that Definition \ref{loc_mcl_FL} reduces to Definition \ref{pfl_def_WF}.
\newline
It can be easily proved that $\mathcal X_s-{\rm singsupp}\,(u)$ is a closed subset of $\mathbb R^n$ and is called the {\em $\mathcal X_s-$singular support} of the distribution $u$, whereas
$WF_{\mathcal X_s}(u)$ is a closed subset of $T^\circ\mathbb R^n$, $M-conic$ with respect to the $\xi$ variable, and is called the {\em $\mathcal X_s-$wave front set} of $u$. The previous notions are natural generalizations of the classical notions of singular support and wave front set of a distribution introduced by H\"ormander \cite{HOR-book}, see also \cite{HOR-hyp}.
\newline
Let $\pi_1$ be the canonical projection of $T^{\circ}\mathbb{R}^n$ onto $\mathbb{R}^n$, that is $\pi_1(x,\xi)=x$. Arguing as in the classical case, one can prove the following.
\begin{proposition}\label{proiezione}
if $u\in\mathcal X_{s,{\rm mcl}}(x_0,\xi^0)$, with $(x_0,\xi^0)\in T^{\circ}\mathbb R^n$, then,  for any $\varphi\in C^\infty_0(\mathbb R^n)$, such that $\varphi(x_0)\neq 0$, $\varphi u\in {\rm mcl}\mathcal X_{s,{\rm mcl}}(x_0,\xi^0)$. Moreover, we have:
$$
\mathcal X_s-{\rm singsupp}(u)=\pi_1(WF_{\mathcal X_s}(u))\,.
$$
\end{proposition}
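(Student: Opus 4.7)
The plan is to prove the cutoff-invariance assertion first and then deduce the identity $\mathcal X_s-{\rm singsupp}(u)=\pi_1(WF_{\mathcal X_s}(u))$ by a compactness argument on the $M$-unit sphere. For the cutoff invariance, let $\phi\in C^\infty_0$ with $\phi(x_0)\neq 0$ and $\psi\in S^0_M$ with $\psi\equiv 1$ on $\Gamma_M\cap\{|\xi|_M>\varepsilon_0\}$ realize the microlocal membership, so that $\psi(D)(\phi u)\in\mathcal X_s$. I would keep $\phi$ as the $x$-cutoff for $\varphi u$ and pick $\psi'\in S^0_M$ equal to $1$ on a strictly smaller $M$-conic neighborhood of $\xi^0$, with $\mathrm{supp}\,\psi'\subset\Gamma_M\cap\{|\xi|_M>\varepsilon_0\}$, so that $\psi\equiv 1$ in a neighborhood of $\mathrm{supp}\,\psi'$. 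Writing $v:=\phi u$ and splitting $v=v_1+v_2$ with $v_1:=\psi(D)v\in\mathcal X_s$,
\[
\psi'(D)(\phi\varphi u)=\psi'(D)(\varphi v_1)+\psi'(D)(\varphi v_2),
\]
I would handle the two summands separately. The first one lies in $\mathcal X_s$: multiplication by $\varphi\in C^\infty_0$ is a pseudodifferential operator with symbol in $S^0_{M,0,\kappa}$ obeying the localization \eqref{localization}, hence bounded on $\mathcal X_s$ by the $M$-microlocal scale property \eqref{cont_Xs}, and $\psi'(D)$ is a Fourier multiplier in $S^0_M$ that preserves $\mathcal X_s$ (through Proposition \ref{Fm_prop} in the Fourier--Lebesgue setting, and taken as a standing assumption on a general scale). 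For the second one, Proposition \ref{symb_calc_prop}(iii) gives the asymptotic symbol of $\psi'(D)\circ\varphi(x)\circ(I-\psi(D))$ as
\[
\sum_{\alpha}\tfrac{(-i)^{|\alpha|}}{\alpha!}\,\partial^\alpha_\xi\psi'(\xi)\,(1-\psi(\xi))\,\partial^\alpha_x\varphi(x),
\]
which vanishes identically term by term since $\psi\equiv 1$ on a neighborhood of $\mathrm{supp}\,\psi'$; hence the full symbol belongs to $S^{-\infty}$, the operator is smoothing, and its value at $v\in\mathcal E'$ is a Schwartz function, in particular an element of $\mathcal X_s$ via \eqref{emb_Xs}.

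For the identity, the inclusion $\pi_1(WF_{\mathcal X_s}(u))\subseteq\mathcal X_s-{\rm singsupp}(u)$ is the easy direction: if $\phi u\in\mathcal X_s$ with $\phi(x_0)\neq 0$, then $\psi(D)(\phi u)\in\mathcal X_s$ for every $\psi\in S^0_M$, so $(x_0,\xi)\notin WF_{\mathcal X_s}(u)$ for every $\xi\neq 0$. For the reverse, I would assume $(x_0,\xi)\notin WF_{\mathcal X_s}(u)$ for all $\xi\neq 0$ and use compactness of the $M$-unit sphere $\{|\xi|_M=1\}$ to extract finitely many witness pairs $(\phi_{\xi_j},\psi_{\xi_j})_{j=1}^N$ whose $M$-conic neighborhoods $\Gamma^{\xi_j}_M$ cover $\mathbb R^n\setminus\{0\}$. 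Invoking the first assertion, I may replace all the $\phi_{\xi_j}$'s by a single cutoff $\phi\in C^\infty_0$ with $\phi(x_0)\neq 0$ and $\mathrm{supp}\,\phi\subset\bigcap_j\{\phi_{\xi_j}\neq 0\}$, still obtaining $\psi_{\xi_j}(D)(\phi u)\in\mathcal X_s$ for each $j$ (after a harmless shrinking of the $\psi_{\xi_j}$). Taking $\{\rho_j\}\subset S^0_M$ a partition of unity on $\{|\xi|_M>R\}$ subordinate to these $\Gamma^{\xi_j}_M\cap\{|\xi|_M>\varepsilon_{\xi_j}\}$ and arranged so that $\rho_j\psi_{\xi_j}\equiv\rho_j$, I would expand
\[
\phi u=\sum_{j=1}^N \rho_j(D)\,\psi_{\xi_j}(D)(\phi u)+\chi(D)(\phi u),\qquad\chi:=1-\textstyle\sum_j\rho_j\in C^\infty_0(\mathbb R^n),
\]
where each summand lies in $\mathcal X_s$ by boundedness of $\rho_j(D)$ on $\mathcal X_s$, and $\chi(D)(\phi u)$ is Schwartz (compactly-supported smooth Fourier transform of a compactly supported distribution). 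Hence $\phi u\in\mathcal X_s$ and $x_0\notin\mathcal X_s-{\rm singsupp}(u)$.

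The main obstacle is the vanishing of the full asymptotic symbol in the first assertion, which hinges on the careful choice of $\psi'$ with support inside the plateau $\{\psi\equiv 1\}$ so that $\partial^\alpha_\xi\psi'$ and $1-\psi$ have disjoint supports for every $\alpha$; without this, one would only recover an operator of some finite negative order, which does not suffice. A secondary technicality is the $\mathcal X_s$-boundedness of Fourier multipliers $\psi(D),\rho_j(D)\in S^0_M$, which, while not explicitly part of the definition \eqref{cont_Xs} of an $M$-microlocal scale, is automatic for the Fourier--Lebesgue scale by Proposition \ref{Fm_prop} and should be regarded as a minimal standing hypothesis on any such scale.
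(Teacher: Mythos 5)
Your proof is correct and is precisely the classical Hörmander-style argument that the paper leaves implicit with the phrase ``Arguing as in the classical case.'' The decisive point in the first assertion — taking $\psi'\in S^0_M$ with $\mathrm{supp}\,\psi'$ inside the plateau $\{\psi\equiv 1\}$, so that every term of the asymptotic expansion of $\psi'(D)\circ M_\varphi\circ(I-\psi(D))$ given by Proposition \ref{symb_calc_prop}(iii) contains a factor $(1-\psi)\partial^\alpha_\xi\psi'\equiv 0$, forcing the full symbol into $S^{-\infty}$ — is exactly the right one, and applying the resulting smoothing operator to $v=\phi u\in\mathcal E'(\mathbb R^n)$ does land in $\mathcal S(\mathbb R^n)\subset\mathcal X_s$. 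The second assertion by compactness of the $M$-unit sphere and a conic partition of unity $\{\rho_j\}\subset S^0_M$, with the low-frequency piece $\chi(D)(\phi u)$ Schwartz because $\chi\widehat{\phi u}\in C^\infty_0$, is likewise the standard argument.

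The one point you are right to flag explicitly: the abstract definition of an $M$-microlocal scale in Sect.\ \ref{mcl_FL_sp_sct} imposes boundedness \eqref{cont_Xs} only for pseudodifferential operators whose symbol satisfies the localization condition \eqref{localization}, and a pure Fourier multiplier $\psi(D)$, $\rho_j(D)\in\mathrm{Op}\,S^0_M$ does not satisfy it. Your proof uses their $\mathcal X_s$-boundedness at several places ($\psi'(D)(\varphi v_1)$, $\rho_j(D)\psi_{\xi_j}(D)(\phi u)$, and also the ``easy'' inclusion). For the Fourier--Lebesgue scale $\mathcal X_s=\mathcal FL^p_{s,M}$ this is Proposition \ref{Fm_prop}, and it also holds for the $M$-homogeneous Sobolev and Hölder scales the paper cites, so no actual gap arises in the cases treated; on a genuinely abstract $\{\mathcal X_s\}$ it should be added as a standing hypothesis, as you observe. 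A cosmetic remark: when you shrink the $\psi_{\xi_j}$ after passing to the common cutoff $\phi$, you should note (Lebesgue number on the compact $M$-sphere) that the shrunk plateaus can still be chosen to cover $\{|\xi|_M>R\}$; this is routine but worth saying.
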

\subsection{Microlocal symbol classes}\label{mcl_symb_sct}

We introduce now microlocal counterparts of the smooth symbol classes studied in Sect. \ref{smooth_symb_sct}, cf. Definitions \ref{sm-sym_dfn}, \ref{sm-sym_k_dfn}.
\begin{definition}\label{microlocalsymbol}
let $U$ be an open subset of $\mathbb{R}^n$ and $\Gamma_M\subset\mathbb{R}^n\setminus\{0\}$ an open $M-$conic set. For $m\in\mathbb{R}$, $\delta\in[0,1]$ and $\kappa>0$; we say that $a\in S^{\prime}(\mathbb{R}^{2n})$ belongs to $S^m_{M,\delta}$ (resp. to $S_{M,\delta,\kappa}$) microlocally on $U\times\Gamma_{M}$  if $a_{|\,\,U\times\Gamma_M}\in C^{\infty}(U\times\Gamma_M)$ and for all $\alpha,\beta\in\mathbb{Z}^n_+$ there exists $C_{\alpha,\beta}>0$ such that \eqref{sm-sym_est} (resp. \eqref{sm-sym_k_est_1}, \eqref{sm-sym_k_est_2}) holds true for all $(x,\xi)\in U\times\Gamma_M$. We will write in this case $a\in mcl S^m_{M,\delta}(U\times\Gamma_M)$ (resp. $a\in mcl S^m_{M,\delta,\kappa}(U\times\Gamma_M)$).
For $(x_0,\xi^0)\in T^{\circ}\mathbb{R}^n$, we set
\begin{equation}\label{microlocalset}
mcl S^m_{M,\delta\,(,\kappa)}(x_0,\xi^0):=\bigcup_{U,\,\Gamma_M}mcl S^m_{M,\delta\,(,\kappa)}(U\times\Gamma_M)\,,
\end{equation}
where the union in the right-hand side is taken over all of the open neighborhoods $U\subset\mathbb{R}^n$ of $x_0$ and the open $M-$conic neighborhoods $\Gamma_M\subset\mathbb{R}^n\setminus\{0\}$ of $\xi^0$.
\end{definition}

With the above stated notation, we say that $a\in \mathcal S'(\mathbb R^n)$ is {\it microlocally regularizing} on $U\times \Gamma_M$ if $a_{|\,\,U\times\Gamma_M}\in C^{\infty}(U\times\Gamma_M)$ and for every $\mu>0$ and all $\alpha,\beta\in\mathbb{Z}^n_+$ a positive constant $C_{\mu,\alpha,\beta}>0$ is found in such a way that:
\begin{equation}\label{microregineq}
|\partial^{\alpha}_{\xi}\partial^{\beta}_x a(x,\xi)|\le C_{\mu,\alpha,\beta}(1+|\xi|)^{-\mu}\,,\quad\forall\,(x,\xi)\in U\times\Gamma_M\,.
\end{equation}
Let us denote by $mcl S^{-\infty}(U\times\Gamma_M)$ the set of all {\em microlocally regularizing symbols} on $U\times\Gamma_M$. For $(x_0,\xi^0)\in T^{\circ}\mathbb{R}^n$, we  set:
\begin{equation}\label{microregpuntuale}
mcl S^{-\infty}(x_0,\xi^0):=\bigcup_{U,\,\Gamma_M}mcl S^{-\infty}(U\times\Gamma_M)\,;
\end{equation}
it is easily seen that $mcl S^{-\infty}(U\times\Gamma_M)=\bigcap_{m>0}mcl S^{-m}_{M,\delta}(U\times\Gamma_M)$ for all $\delta\in[0,1]$ and $M\in\mathbb{N}^n$, and a similar identity holds for $mcl S^{-\infty}(x_0,\xi^0)$.
\newline
It is immediate to check that symbols in $mcl S^m_{M,\delta}(U\times\Gamma_M)$, $mcl S^m_{M,\delta}(x_0,\xi^0)$ behave according to the same rules of ``global'' symbols, collected in Proposition \ref{symb_calc_prop}. Moreover $S^m_{M,\delta\,(,\kappa)}\subset mcl S^m_{M,\delta\,(,\kappa)}(U\times\Gamma_M)\subset mcl S^m_{M,\delta\,(,\kappa)}(x_0,\xi^0)$ hold true, whenever $(x_0,\xi^0)\in T^{\circ}\mathbb{R}^n$, $U$ is an open neighborhood of $x_0$ and $\Gamma_M$ is an open $M-$conic neighborhood of $\xi^0$.
%%%%%%%%%%%%%%%
A slight modification of the arguments used to prove Proposition \ref{M_elliptic_prop}, see also \cite[Proposition 4.4]{GM-09}, leads to the following microlocal counterpart.
\begin{proposition}{\bf (Microlocal parametrix).}\label{microparametrix}
Assume that $0\le\delta<\mu_\ast/\mu^\ast$ and $\kappa>0$ and let $a(x,\xi)\in S^m_{M,\delta,\kappa}$ be microlocally $M-$elliptic at $(x_0,\xi^0)\in T^{\circ}\mathbb{R}^n$. Then there exist symbols $b(x,\xi),c(x,\xi)\in S^{-m}_{M,\delta,\kappa}$ such that
\begin{equation}\label{leftrightinverse}
c(x,D)a(x,D)=I+l(x,D)\quad and\quad a(x,D)b(x,D)=I+r(x,D)\,,
\end{equation}
and $l(x,\xi),r(x,\xi)\in{\rm mcl}S^{-\infty}(x_0,\xi^0)$.
\end{proposition}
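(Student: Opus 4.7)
My plan is to mimic the global parametrix construction of Proposition \ref{M_elliptic_prop}, inserting microlocal cutoffs at every step and invoking the calculus in Proposition \ref{symb_calc_prop} to control the resulting classes. By microlocal $M$-ellipticity, fix open neighborhoods $U\subset U_0$ of $x_0$ and $\Gamma_M\subset\Gamma_{M,0}$ of $\xi^0$, together with $\rho_1>\rho_0$, such that $|a(x,\xi)|\ge c_0\langle\xi\rangle_M^m$ on $U_0\times\Gamma_{M,0}\cap\{|\xi|_M>\rho_0\}$ and the closure of $U\times\Gamma_M\cap\{|\xi|_M>\rho_1\}$ is contained in this ellipticity set. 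Select a cutoff $\chi(x,\xi)\in S^0_M\subset S^0_{M,0,\kappa}$ with $\chi\equiv 1$ on $U\times\Gamma_M\cap\{|\xi|_M>2\rho_1\}$ and supported in $U_0\times\Gamma_{M,0}\cap\{|\xi|_M>\rho_0\}$, and define
\begin{equation*}
b_0(x,\xi):=\chi(x,\xi)\,\langle\xi\rangle_M^{-m}\,F\bigl(\langle\xi\rangle_M^{-m}a(x,\xi)\bigr)\,,
\end{equation*}
with $F\in C^{\infty}(\mathbb C)$ satisfying $F(z)=1/z$ for $|z|\ge c_0$. Repeating the Fa\`a di Bruno argument of Proposition \ref{M_elliptic_prop} (together with Proposition \ref{symb_calc_prop}(i) to handle products with $\chi$) shows $b_0\in S^{-m}_{M,\delta,\kappa}$, and by construction $b_0(x,\xi)\,a(x,\xi)=\chi(x,\xi)$ on $\operatorname{supp}\chi$, hence on all of $\mathbb R^{2n}$.

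Using Proposition \ref{symb_calc_prop}(iii) with the orders written explicitly, the composition satisfies $b_0\#a=\chi+r_1$ with $r_1\in S^{-\varepsilon}_{M,\delta,\kappa}$, where $\varepsilon:=1/\mu^{\ast}-\delta/\mu_{\ast}>0$. Moreover, every term of the asymptotic expansion of $r_1$ beyond the principal part involves a factor $\partial^{\alpha}_x b_0$ with $|\alpha|\ge 1$, hence vanishes on the open conic set where $\chi\equiv 1$; this shows $r_1\in\operatorname{mcl}S^{-\infty}(x_0,\xi^0)$. I then build the Neumann-type correction $q(x,\xi)\in S^{-\varepsilon}_{M,\delta,\kappa}$ associated, via Proposition \ref{symb_calc_prop}(ii), to the formal series $\sum_{j\ge 1}(-1)^j r_1^{\#j}$, each summand of which lies in $S^{-j\varepsilon}_{M,\delta,\kappa}$ by Proposition \ref{symb_calc_prop}(iii) and is itself microlocally regularizing at $(x_0,\xi^0)$. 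Setting $c:=(1+q)\#b_0\in S^{-m}_{M,\delta,\kappa}$, a direct computation gives
\begin{equation*}
c\#a=(1+q)\#(\chi+r_1)=\chi+\bigl(r_1+q\#r_1+q\bigr)-q\#(1-\chi)\,,
\end{equation*}
and the bracketed term is in $\operatorname{mcl}S^{-\infty}(x_0,\xi^0)$ by construction of $q$ (asymptotic cancellation of the geometric series), while $q\#(1-\chi)$ is in $\operatorname{mcl}S^{-\infty}(x_0,\xi^0)$ because $1-\chi$, together with all its derivatives, vanishes on an $M$-conic neighborhood of $(x_0,\xi^0)$. Finally, $\chi-I\in\operatorname{mcl}S^{-\infty}(x_0,\xi^0)$ since $\chi\equiv 1$ near $(x_0,\xi^0)$, so $l:=c\#a-I\in\operatorname{mcl}S^{-\infty}(x_0,\xi^0)$ as required.

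The right parametrix $b$ is constructed symmetrically, starting again from $b_0$ and using $a\#b_0=\chi+\tilde r_1$ with $\tilde r_1\in S^{-\varepsilon}_{M,\delta,\kappa}$, microlocally regularizing at $(x_0,\xi^0)$ for the same reason, and then forming $b:=b_0\#(1+\tilde q)$ with $\tilde q$ the analogous Neumann corrector. The main subtlety I expect is the bookkeeping of microlocal supports to guarantee that all of $r_1$, $q\#(1-\chi)$, and the tail of the Neumann series lie in $\operatorname{mcl}S^{-\infty}(x_0,\xi^0)$; this rests on the essentially local nature of the asymptotic expansion in Proposition \ref{symb_calc_prop}(iii), combined with the fact that $\chi$ can be chosen to equal $1$ on an arbitrarily prescribed smaller conic neighborhood of $(x_0,\xi^0)$ inside the $M$-ellipticity region. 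Once this is in place, the restriction $0\le\delta<\mu_{\ast}/\mu^{\ast}$ only enters through Proposition \ref{symb_calc_prop}(iii) to ensure $\varepsilon>0$ and hence convergence of orders of the Neumann series to $-\infty$, exactly as in the global case.
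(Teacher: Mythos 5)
Your strategy---insert an $M-$conic cutoff $\chi$, define $b_0:=\chi\,\langle\xi\rangle_M^{-m}F(\langle\xi\rangle_M^{-m}a)$, and run the global Neumann construction---is exactly the ``slight modification of Proposition~\ref{M_elliptic_prop}'' the paper alludes to (and delegates to \cite{GM-09}), and the calculus of Proposition~\ref{symb_calc_prop} keeps everything in $S^{\cdot}_{M,\delta,\kappa}$. However, the claim that $r_1:=b_0\sharp a-\chi$ lies in ${\rm mcl}\,S^{-\infty}(x_0,\xi^0)$, and likewise that each $r_1^{\sharp j}$ is microlocally regularizing, is false. The expansion of $b_0\sharp a$ is $\sum_\alpha\frac{(-i)^{|\alpha|}}{\alpha!}\partial^\alpha_\xi b_0\,\partial^\alpha_x a$; for $|\alpha|\ge 1$ the factor is $\partial^\alpha_\xi b_0$ (not $\partial^\alpha_x b_0$ as you wrote), and on the conic region where $\chi\equiv 1$ this equals $\partial^\alpha_\xi\bigl(\langle\xi\rangle_M^{-m}F(\langle\xi\rangle_M^{-m}a)\bigr)\,\partial^\alpha_x a$, which does not vanish for a genuine $x$-dependent $a$. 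Even for the composition $a\sharp b_0$, where $\partial^\alpha_x b_0$ does appear, on the set $\chi\equiv 1$ one has $\partial^\alpha_x b_0=\partial^\alpha_x\bigl(\langle\xi\rangle_M^{-m}F(\langle\xi\rangle_M^{-m}a)\bigr)$, again generically non-zero. So $r_1\in S^{-\varepsilon}_{M,\delta,\kappa}$ but $r_1\notin{\rm mcl}\,S^{-\infty}(x_0,\xi^0)$, and your concluding list of ``subtleties'' should not include $r_1$.

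The error is ultimately harmless because your later steps do not rely on it. The decisive ingredient in the display for $c\sharp a$ is the \emph{globally} smoothing identity $r_1+q+q\sharp r_1\in S^{-\infty}$, which comes from the asymptotic cancellation of the Neumann sum via Proposition~\ref{symb_calc_prop}\,(ii)--(iii) and requires no microlocal support argument at all; the only genuinely microlocal inputs are $\chi-1\in{\rm mcl}\,S^{-\infty}(x_0,\xi^0)$ and $q\sharp(1-\chi)\in{\rm mcl}\,S^{-\infty}(x_0,\xi^0)$, both of which you justify correctly by the vanishing of $1-\chi$ together with all its derivatives on an $M-$conic neighborhood of $(x_0,\xi^0)$. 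With the incorrect localization of $r_1$ discarded, the construction closes as you intended for both the left and the right parametrix.
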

The notion of microlocal $M-$ellipticity, as well as the characteristic set, see Definition \ref{Melliptic}, can be readily extended to non-smooth $M-$homogeneous symbols (as, in principle, it only needs that the symbol $a(x,\xi)$ be a continuous function, at least for sufficiently large $\xi$); in particular, microlocally $M-$elliptic symbols in $\mathcal FL^p_{r,M}S^m_{M}$, with sufficiently large $r>0$, must be considered later on. For a symbol $a(x,\xi)\in\mathcal FL^p_{r,M}S^m_{M}$, with $r>\frac{n}{\mu_\ast q}$, $p\in[1,+\infty]$ and $\frac1{p}+\frac1{q}=1$, for every $0<\delta\le 1$ let the symbol $a^\#(x,\xi)$ and $a^\natural(x,\xi)$ be defined as in \eqref{a_sharp}, \eqref{a_natural}.
\newline
The following result can be proved along the same lines of the proof of \cite[Proposition 7.3]{GM-09}.
\begin{proposition}\label{SHARPEL}
 If $a(x,\xi)\in\mathcal FL^p_{r,M}S^{m}_{M}$, $m\in\mathbb R$, is microlocally $M-$elliptic at $(x_0,\xi^0)\in T^\circ\mathbb R^n$, then $a^{\#}(x,\xi)\in S^{m}_{M,\delta,\kappa}$ (with $\kappa$ as in the statement of Proposition \ref{pro:2}) is also microlocally $M-$elliptic at $(x_0,\xi^0)$ for any $0<\delta\le 1$.
\end{proposition}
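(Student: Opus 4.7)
The plan is to exploit the decomposition $a=a^{\#}+a^{\natural}$ and show that $a^{\natural}$ is of strictly lower order than $a$, so that the microlocal ellipticity bound of $a$ transfers to $a^{\#}$ up to a factor. Precisely, by Proposition \ref{pro:2} the symbol $a^{\natural}(x,\xi)$ belongs to $\mathcal FL^p_{r,M}S^{m-\delta\kappa}_{M,\delta}$, where $\kappa=r-\frac{n}{\mu_\ast q}>0$ and $\delta>0$, so that the exponent $m-\delta\kappa$ is strictly less than $m$. The key point is then to convert this Fourier Lebesgue information into a \emph{pointwise} upper bound for $a^{\natural}(x,\xi)$ that is uniform in $x$.

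To obtain that pointwise bound, I would apply the defining inequality \eqref{FL_est} of the class $\mathcal FL^p_{r,M}S^{m-\delta\kappa}_{M,\delta}$ with $\alpha=0$ to deduce
\begin{equation*}
\Vert a^{\natural}(\cdot,\xi)\Vert_{\mathcal FL^1}\le C\langle\xi\rangle_M^{m-\delta\kappa},\qquad\forall\,\xi\in\mathbb R^n.
\end{equation*}
Since $\mathcal FL^1\hookrightarrow L^\infty$ via the Fourier inversion estimate $|f(x)|\le(2\pi)^{-n}\Vert\widehat f\Vert_{L^1}$, this yields
\begin{equation*}
\sup_{x\in\mathbb R^n}\vert a^{\natural}(x,\xi)\vert\le C'\langle\xi\rangle_M^{m-\delta\kappa},\qquad\forall\,\xi\in\mathbb R^n,
\end{equation*}
with some constant $C'>0$ independent of $x$ and $\xi$.

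Next, I would invoke the microlocal $M$-ellipticity of $a$ to select an open neighborhood $U\subset\mathbb R^n$ of $x_0$, an $M$-conic open neighborhood $\Gamma_M$ of $\xi^0$, and constants $c_0,\rho_0>0$ such that $\vert a(x,\xi)\vert\ge c_0\langle\xi\rangle_M^m$ for $(x,\xi)\in U\times\Gamma_M$ with $\vert\xi\vert_M>\rho_0$. Because $\delta\kappa>0$, I can then choose $\rho_1\ge\rho_0$ sufficiently large so that $C'\langle\xi\rangle_M^{-\delta\kappa}\le c_0/2$ whenever $\vert\xi\vert_M>\rho_1$. The triangle inequality $\vert a^{\#}\vert\ge\vert a\vert-\vert a^{\natural}\vert$ then gives
\begin{equation*}
\vert a^{\#}(x,\xi)\vert\ge c_0\langle\xi\rangle_M^m-C'\langle\xi\rangle_M^{m-\delta\kappa}\ge\frac{c_0}{2}\langle\xi\rangle_M^m,\qquad(x,\xi)\in U\times\Gamma_M,\ \vert\xi\vert_M>\rho_1,
\end{equation*}
which is exactly the microlocal $M$-ellipticity of $a^{\#}$ at $(x_0,\xi^0)$ in the sense of Definition \ref{Melliptic}.

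There is no real obstacle here: the whole argument is essentially a triangle inequality once one has the quantitative pointwise control of $a^{\natural}$. The only subtle point worth stressing is that the lower-order estimate on $a^{\natural}$ is a uniform (in $x$) bound coming from the Fourier Lebesgue norm, not from a smooth symbol estimate, and it is precisely the strict gain $\delta\kappa>0$ furnished by Proposition \ref{pro:2} that allows the $a^{\natural}$ contribution to be absorbed into a fraction of the ellipticity lower bound for large enough $\vert\xi\vert_M$.
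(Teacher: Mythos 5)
Your proof is correct and takes the natural route: by Proposition \ref{pro:2} the remainder $a^{\natural}$ has order $m-\delta\kappa<m$, whence the Fourier inversion bound $\Vert a^{\natural}(\cdot,\xi)\Vert_{L^\infty}\le(2\pi)^{-n}\Vert a^{\natural}(\cdot,\xi)\Vert_{\mathcal FL^1}\le C'\langle\xi\rangle_M^{m-\delta\kappa}$ lets the ellipticity lower bound on $a$ absorb $a^{\natural}$ for $\vert\xi\vert_M$ large. The paper delegates its own proof to \cite[Proposition 7.3]{GM-09} with the indication that it runs along the same lines, and your decomposition-plus-triangle-inequality argument is exactly that standard scheme.
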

%=======================================================================
\subsection{Microlocal continuity and regularity results}\label{Xs_mcl_res_sct}
Let $\{\mathcal X_s\}_{s\in\mathbb R}$ be a $M-$mic\-rolocal scale as defined in Sect. \ref{mcl_FL_sp_sct}. The following microlocal counterpart of the boundedness property \eqref{cont_Xs} and microlocal $\mathcal X_s-$regularity follow along the same lines of the proof of \cite[Theorem 5.4 and Theorem 6.1]{GM-09}.
\begin{proposition}\label{microsobolevaction}
for $0\le\delta<\mu_\ast/\mu^\ast$, $\kappa>\kappa_0$ (being $\kappa_0>0$ the same threshold involved in \eqref{cont_Xs}), $m\in\mathbb R$ and $(x_0,\xi^0)\in T^{\circ}\mathbb{R}^n$, assume that $a(x,\xi)\in S^{\infty}_{M,\delta}\cap {\rm mcl} S^m_{M,\delta,\kappa}(x_0,\xi^0)$. Then for all $s\in\mathbb{R}$
\begin{equation}\label{microcont}
u\in\mathcal X_{s+m, {\rm mcl}}(x_0,\xi^0)\quad\Rightarrow\quad a(x,D)u\in \mathcal X_{s, {\rm mcl}}(x_0,\xi^0)\,.
\end{equation}
\end{proposition}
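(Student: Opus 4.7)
The plan is to reduce the microlocal assertion to a global continuity statement for a suitably truncated version of $a(x,D)$ whose symbol is in $S^m_{M,\delta,\kappa}$ and enjoys the locality \eqref{localization}, so that the $M$-microlocal scale axiom \eqref{cont_Xs} applies, while every error term either vanishes identically (thanks to disjoint symbol supports) or is a smoothing operator by pseudolocality of symbols in $S^\infty_{M,\delta}$.

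First I will fix the cut-offs. From $a\in{\rm mcl}\,S^m_{M,\delta,\kappa}(x_0,\xi^0)$ I choose an open neighborhood $U$ of $x_0$ and an $M$-conic neighborhood $\Gamma^1_M$ of $\xi^0$ where the estimates \eqref{sm-sym_k_est_1}--\eqref{sm-sym_k_est_2} hold for $a$. From $u\in\mathcal X_{s+m,{\rm mcl}}(x_0,\xi^0)$, after the standard normalization to $\phi_0\equiv 1$ near $x_0$ (justified by the well-posedness of the microlocal space), I obtain $\phi_0\in C^\infty_0(\mathbb R^n)$ and $\psi_0\in S^0_M$ with $\psi_0\equiv 1$ on $\Gamma^0_M\cap\{|\xi|_M>\varepsilon_0\}$, such that $\psi_0(D)(\phi_0 u)\in\mathcal X_{s+m}$. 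Then I pick nested cut-offs $\tilde\phi,\phi_1\in C^\infty_0(\mathbb R^n)$ and $\tilde\psi,\psi_1\in S^0_M$ with $\tilde\phi(x_0)\neq 0$, $\phi_1\equiv 1$ on a neighborhood of $\supp\tilde\phi$, $\supp\phi_1\subset U\cap\{\phi_0\equiv 1\}$, $\psi_1\equiv 1$ on a neighborhood of $\supp\tilde\psi$, and $\supp\psi_1\subset\Gamma^1_M\cap\Gamma^0_M\cap\{|\xi|_M>\varepsilon_0\}$.

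The core step is the introduction of the auxiliary symbol $b(x,\xi):=\phi_1(x)\psi_1(\xi)a(x,\xi)$. Since $\supp\phi_1\times\supp\psi_1\subset U\times\Gamma^1_M$, Definition \ref{microlocalsymbol} yields $b\in S^m_{M,\delta,\kappa}$ globally and with locality \eqref{localization}, so \eqref{cont_Xs} makes $b(x,D)$ continuous from $\mathcal X_{s+m}$ into $\mathcal X_s$. Decomposing $u=\psi_0(D)(\phi_0u)+(I-\psi_0(D))(\phi_0u)+(1-\phi_0)u$, I can show $b(x,D)u\in\mathcal X_s$: the first summand lies in $\mathcal X_s$ by the continuity of $b(x,D)$ just recalled; the composition $b(x,D)(I-\psi_0(D))$ has composite symbol $\phi_1\psi_1 a(1-\psi_0)$, which vanishes identically because $\supp\psi_1\subset\{\psi_0\equiv 1\}$; and $b(x,D)((1-\phi_0)u)$ has distribution kernel concentrated in $\supp\phi_1\times\supp(1-\phi_0)$, a set disjoint from the diagonal, so by pseudolocality of symbols in $S^\infty_{M,\delta}$ this operator is regularizing and maps $\mathcal S'(\mathbb R^n)$ into $C^\infty_0(\mathbb R^n)\subset\mathcal X_s$ for every $s$.

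Finally I transfer this information to $\tilde\psi(D)(\tilde\phi\,a(x,D)u)\in\mathcal X_s$, which by Definition \ref{loc_mcl_FL} gives $a(x,D)u\in\mathcal X_{s,{\rm mcl}}(x_0,\xi^0)$. Splitting $u=\psi_1(D)u+(I-\psi_1(D))u$ and using $\tilde\phi\phi_1=\tilde\phi$, the first part equals $\tilde\psi(D)(\tilde\phi\cdot b(x,D)u)$, which lies in $\mathcal X_s$ since multiplication by $\tilde\phi$ and the Fourier multiplier $\tilde\psi(D)$ are bounded on $\mathcal X_s$. The second part $\tilde\psi(D)\tilde\phi\,a(x,D)(I-\psi_1(D))u$ is the action of a pseudodifferential composition whose symbol, expanded asymptotically via Proposition \ref{symb_calc_prop}, is a sum of terms each involving a $\xi$-derivative of $\tilde\psi$ paired with the factor $1-\psi_1$, whose $\xi$-supports are disjoint by construction; hence the total symbol lies in $S^{-\infty}$ and, combined with the compact $x$-support from $\tilde\phi$, the composition is genuinely smoothing, so its value on $u\in\mathcal S'(\mathbb R^n)$ lies in $\mathcal X_s$ for every $s$. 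The main subtlety will be the rigorous handling of the remainder in this last asymptotic expansion, together with the justification of the normalization $\phi_0\equiv 1$ near $x_0$; both are standard consequences of the calculus developed in Sections \ref{smooth_symb_sct}--\ref{mcl_symb_sct}.
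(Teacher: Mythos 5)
Your argument is essentially the same approach the paper intends (the paper just refers to the microlocalization scheme of \cite{GM-09}): truncate the symbol to $b(x,\xi)=\phi_1(x)\psi_1(\xi)a(x,\xi)\in S^m_{M,\delta,\kappa}$ with compact $x$-support so that the scale axiom \eqref{cont_Xs} applies, then handle the mismatch terms by disjointness of symbol supports and by the smoothing of the off-diagonal/compactly supported kernels, and it is correct modulo the standard technical remainders you flag at the end.

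One point you state too quickly is worth a remark: you invoke that ``the Fourier multiplier $\tilde\psi(D)$ is bounded on $\mathcal X_s$.'' The abstract $M$-microlocal scale axiom \eqref{cont_Xs} only guarantees boundedness for symbols in $S^m_{M,\delta,\kappa}$ satisfying the \emph{locality} condition \eqref{localization}, so $\tilde\psi(D)$ itself is not covered by it. This is true in the concrete realizations the paper uses (Fourier--Lebesgue, by Proposition \ref{Fm_prop}, and the Sobolev and H\"older scales of \cite{GM-09}), but in the abstract setting the clean way to make the step is the same device you use elsewhere: insert a cut-off $\chi\in C^\infty_0$ with $\chi\equiv 1$ near $\mathop{\rm supp}\tilde\phi$, write $\tilde\psi(D)=\chi\,\tilde\psi(D)+(1-\chi)\tilde\psi(D)$, note that $\chi(x)\tilde\psi(\xi)\in S^0_{M,0,\kappa}$ is localized and hence falls under \eqref{cont_Xs}, while $(1-\chi)\tilde\psi(D)M_{\tilde\phi}$ has a smooth, compactly $y$-supported, rapidly decaying off-diagonal kernel and so maps $\mathcal S'(\mathbb R^n)$ into $\mathcal S(\mathbb R^n)\subset\mathcal X_s$. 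The same device is what makes precise your claim that the $S^{-\infty}$ compositions (with the $\tilde\phi$-factor providing the compact $x$-support of the right-hand factor) genuinely map into $\mathcal S(\mathbb R^n)$ rather than merely into the polynomially bounded class $\mathcal P(\mathbb R^n)$.
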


\begin{proposition}\label{mcl_Xs-reg}
for $0\le\delta<\mu_\ast/\mu^\ast$, $\kappa>\kappa_0$, $m\in\mathbb R$, let $a(x,\xi)\in S^m_{M,\delta,\kappa}$ be microlocally $M-$elliptic at $(x_0,\xi^0)\in T^{\circ}\mathbb{R}^n$. Then for all $s\in\mathbb R$
\begin{equation}
u\in\mathcal S^\prime(\mathbb R^n)\,\,\,\,\mbox{and}\,\,\,\,a(x,D)u\in\mathcal X_{s, {\rm mcl}}(x_0,\xi^0)\quad\Rightarrow\quad u\in\mathcal X_{s+m, {\rm mcl}}(x_0,\xi^0)\,.
\end{equation}
\end{proposition}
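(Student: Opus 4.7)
The plan is to employ a standard microlocal parametrix argument. By the microlocal $M$-ellipticity of $a(x,\xi)$ at $(x_0,\xi^0)$ and Proposition \ref{microparametrix}, there exist a symbol $c(x,\xi)\in S^{-m}_{M,\delta,\kappa}$ and a remainder $l(x,\xi)\in \mathrm{mcl}\,S^{-\infty}(x_0,\xi^0)$ (globally a smooth symbol of some finite order, since it arises from the symbolic calculus in $S^{m}_{M,\delta,\kappa}$ developed in Section \ref{symb_calc_sct}) such that
$$
c(x,D)a(x,D) = I + l(x,D).
$$
Applying this operator identity to $u$, I obtain the decomposition
$$
u = c(x,D)[a(x,D)u] - l(x,D)u,
$$
and the proof reduces to showing that both terms on the right-hand side belong to $\mathcal{X}_{s+m,\mathrm{mcl}}(x_0,\xi^0)$.

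For the first term, the hypothesis gives $a(x,D)u \in \mathcal{X}_{s,\mathrm{mcl}}(x_0,\xi^0)$, while $c(x,\xi)\in S^{-m}_{M,\delta,\kappa}\subset S^{\infty}_{M,\delta}\cap \mathrm{mcl}\,S^{-m}_{M,\delta,\kappa}(x_0,\xi^0)$. Therefore the microlocal continuity established by Proposition \ref{microsobolevaction} (applied with the symbol $c$ of order $-m$) immediately yields $c(x,D)[a(x,D)u]\in \mathcal{X}_{s+m,\mathrm{mcl}}(x_0,\xi^0)$.

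The treatment of the remainder $l(x,D)u$ rests on the fact that any tempered distribution has some a priori microlocal regularity in the scale, i.e. there exists $t\in\mathbb{R}$ such that $u\in\mathcal{X}_{t,\mathrm{mcl}}(x_0,\xi^0)$; for the $M$-homogeneous Fourier Lebesgue scale this is immediate, since for any cut-off $\phi\in C^{\infty}_0(\mathbb{R}^n)$ with $\phi(x_0)\neq 0$ one has $\phi u\in\mathcal{E}^\prime(\mathbb{R}^n)$, so $\widehat{\phi u}$ is polynomially bounded and thus $\phi u \in \mathcal{FL}^p_{t,M}$ for $t$ sufficiently negative. Since $l\in \mathrm{mcl}\,S^{-\infty}(x_0,\xi^0)$ means $l\in \mathrm{mcl}\,S^{-N}_{M,\delta,\kappa}(x_0,\xi^0)$ for every $N>0$, picking $N$ with $t+N\ge s+m$ and applying Proposition \ref{microsobolevaction} once more produces $l(x,D)u \in \mathcal{X}_{t+N,\mathrm{mcl}}(x_0,\xi^0) \subset \mathcal{X}_{s+m,\mathrm{mcl}}(x_0,\xi^0)$, where the embedding is the microlocal counterpart of \eqref{emb_Xs}. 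Combining the two contributions gives $u\in \mathcal{X}_{s+m,\mathrm{mcl}}(x_0,\xi^0)$. The subtle point, and the step that is not merely a formal application of the tools already proved, is exactly the a priori microlocal regularity of an arbitrary tempered distribution: this is not built into the axiomatic definition of $M$-microlocal scale but has to be checked case by case for each concrete scale of spaces, and holds in particular for the Fourier Lebesgue, Sobolev and H\"older scales considered in this paper.
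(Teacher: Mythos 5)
Your proof is correct and follows the standard microlocal parametrix argument, which is exactly what the paper defers to via the reference to \cite[Theorem 6.1]{GM-09}: you build a left microlocal parametrix $c(x,D)$ from Proposition~\ref{microparametrix}, write $u = c(x,D)[a(x,D)u] - l(x,D)u$, handle the first term with Proposition~\ref{microsobolevaction}, and absorb the remainder $l(x,D)u$ by pushing its order arbitrarily low. Your observation that the a priori containment $\mathcal E'(\mathbb R^n)\subset\bigcup_t\mathcal X_t$ is not guaranteed by the axioms \eqref{emb_Xs}--\eqref{cont_Xs} alone, and must be checked for each concrete scale (which you do for $\mathcal FL^p_{t,M}$), is a genuine and worthwhile remark: the paper's abstract definition of an $M$-microlocal scale does not include this property, yet it is silently used in any parametrix argument of this type.
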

Resorting on the notions of $M-$homogeneous wave front set of a distribution and characteristic set of a symbol, the results of the above propositions can be also restated in the following
\begin{corollary}\label{Xs_cor}
for $0\le\delta<\mu_\ast/\mu^\ast$, $\kappa>\kappa_0$, $m\in\mathbb R$, $a(x,\xi)\in S^m_{M,\delta,\kappa}$ and $u\in\mathcal S^\prime(\mathbb R^n)$, the following inclusions
\begin{equation*}
WF_{\mathcal X_{s}}(a(x,D)u)\subset WF_{\mathcal X_{s+m}}(u)\subset WF_{\mathcal X_s}(a(x,D)u)\cup{\rm Char}(a)
\end{equation*}
hold true for every $s\in\mathbb R$.
\end{corollary}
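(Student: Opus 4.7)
The plan is to derive both inclusions as straightforward contrapositives of the two propositions that immediately precede the corollary. I will fix an arbitrary point $(x_0,\xi^0) \in T^\circ\mathbb R^n$ and translate each set-theoretic inclusion into a statement about microlocal membership via Definition \ref{loc_mcl_FL}.

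For the first inclusion $WF_{\mathcal X_s}(a(x,D)u) \subset WF_{\mathcal X_{s+m}}(u)$, I would show the contrapositive: assuming $(x_0,\xi^0) \notin WF_{\mathcal X_{s+m}}(u)$, i.e., $u \in \mathcal X_{s+m,\,\mathrm{mcl}}(x_0,\xi^0)$, I need to conclude $a(x,D)u \in \mathcal X_{s,\,\mathrm{mcl}}(x_0,\xi^0)$. Because $a(x,\xi) \in S^m_{M,\delta,\kappa}$ globally, it clearly belongs to $S^\infty_{M,\delta} \cap \mathrm{mcl}\,S^m_{M,\delta,\kappa}(x_0,\xi^0)$, so this is exactly the conclusion of Proposition \ref{microsobolevaction} applied at $(x_0,\xi^0)$.

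For the second inclusion $WF_{\mathcal X_{s+m}}(u) \subset WF_{\mathcal X_s}(a(x,D)u) \cup \mathrm{Char}(a)$, I would again argue by contrapositive: assuming $(x_0,\xi^0) \notin WF_{\mathcal X_s}(a(x,D)u) \cup \mathrm{Char}(a)$, the first condition gives $a(x,D)u \in \mathcal X_{s,\,\mathrm{mcl}}(x_0,\xi^0)$, while $(x_0,\xi^0) \notin \mathrm{Char}(a)$ is, by Definition \ref{Melliptic}, precisely the microlocal $M$-ellipticity of $a$ at $(x_0,\xi^0)$. Combined with the hypotheses $a \in S^m_{M,\delta,\kappa}$, $0 \le \delta < \mu_\ast/\mu^\ast$ and $\kappa > \kappa_0$, these are exactly the assumptions of Proposition \ref{mcl_Xs-reg}, whose conclusion delivers $u \in \mathcal X_{s+m,\,\mathrm{mcl}}(x_0,\xi^0)$, that is, $(x_0,\xi^0) \notin WF_{\mathcal X_{s+m}}(u)$.

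Since both steps are one-line invocations of previously established results, there is no genuine obstacle here; the substantive work has already been done in Propositions \ref{microsobolevaction} and \ref{mcl_Xs-reg}. The only minor bookkeeping point is to make sure the geometric condition $(x_0,\xi^0) \notin \mathrm{Char}(a)$ is correctly identified with the analytic microlocal $M$-ellipticity condition required by Proposition \ref{mcl_Xs-reg}, which is immediate from Definition \ref{Melliptic}.
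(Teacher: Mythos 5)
Your proposal is correct and follows exactly the route the paper intends: the paper introduces Propositions \ref{microsobolevaction} and \ref{mcl_Xs-reg} and then states that Corollary \ref{Xs_cor} is just a restatement of them in the language of wave front sets and characteristic sets, which is precisely the contrapositive translation you carry out. Your step-by-step verification (including the observation that a global symbol in $S^m_{M,\delta,\kappa}$ trivially lies in $S^\infty_{M,\delta}\cap\mathrm{mcl}\,S^m_{M,\delta,\kappa}(x_0,\xi^0)$, and the identification of $(x_0,\xi^0)\notin\mathrm{Char}(a)$ with microlocal $M$-ellipticity via Definition \ref{Melliptic}) is exactly the bookkeeping the paper leaves implicit.
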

As particular case of Corollary \ref{Xs_cor} we obtain the result in Theorem \ref{FL_cor}
%%%%%%%%%%%%%%%%%%%%%%%%%%%%%
%============================================================================
\subsection{Proof of Theorem \ref{cor_APPLICATION}}\label{thm1.4_sct}
This section is devoted to the proof of Theorem \ref{cor_APPLICATION} concerning the microlocal propagation of Fourier Lebesgue singularities of the linear PDE \eqref{APPL:1}. As it will be seen below, the statement of Theorem \ref{cor_APPLICATION} can be deduced as an immediate consequence of a more gerenal result concerning a suitable class of pseudodifferential operators.

\smallskip
Since the coefficients $c_\alpha$ in the equation \eqref{APPL:1} belong to $\mathcal FL^p_{r,M,{\rm loc}}(x_0)$, it follows that the {\em localized symbol} $a_\phi(x,\xi):=\phi(x)a(x,\xi)$ belongs to the symbol class $\mathcal FL^p_{r,M}S^1_{M}:=\mathcal FL^p_{r,M}S^1_{M,0}$, for some function $\phi\in C^\infty_0(\mathbb R^n)$ supported on a sufficiently small compact neighborhood of $x_0$ and satisfying $\phi(x_0)\neq 0$ (see Definition \ref{def_bdd_smooth_symb}); moreover, by exploiting the $M-$homogeneity in $\xi$ of the $M-$principal part of $a(x,\xi)$, the localized symbol $a_\phi(x,\xi)$ amounts to be microlocally $M-$elliptic at $(x_0,\xi^0)$ according to Definition \ref{Melliptic}.
\newline
It is also clear that, by a locality argument, for any $u\in\mathcal S^\prime(\mathbb R^n)$
\begin{equation}\label{locality}
a_\phi(x,D)u=a_\phi(x,D)(\psi u)\,,
\end{equation}
where $\psi\in C^\infty_0(\mathbb R^n)$ is some cut-off function, depending only on $\phi$, that satisfies
\begin{equation}\label{psi}
0\le\psi\le 1\,,\qquad\mbox{and}\qquad\psi\equiv 1\,,\quad\mbox{on}\,\,\,{\rm supp}\,\phi\,.
\end{equation}
It tends out that only the identity \eqref{locality} will be really exploited in the subsequent analysis; thus the symbol of a differential operator of the type considered in \eqref{APPL:1}, with {\em point-wise local} $M-$homogeneous Fourier Lebesgue coefficients, can be replaced with any symbol $a(x,\xi)$ of positive order $m$ and local Fourier Lebesgue coefficients at some point $x_0$, namely 
\begin{equation}\label{FL_loc_symb}
a_\phi(x,\xi)\in\mathcal FL^p_{r,M}S^m_M\,,\quad\mbox{for some}\,\,\,\phi\in C^\infty_0(\mathbb R^n)\,\,\,\mbox{satisfying}\,\,\,\phi(x_0)\neq 0\,,
\end{equation}
so that the related pseudodifferential operator $a(x,D)$ be {\em properly supported}: while locality does not hold for a general symbol in $\mathcal FL^p_{r,M}S^m_M$ (unless it is a polynomial in $\xi$ variable ), identity \eqref{locality} is still true whenever $a(x,D)$ is properly supported (see \cite{AG-book} for the definition and properties of a properly supported operator). For shortness here below we write $a(x,\xi)\in\mathcal FL^p_{r,M}S^m_M(x_0)$ to mean that condition \eqref{FL_loc_symb} is satisfied by $a(x,\xi)$.
\begin{theorem}\label{APPLICATION}
 For $(x_0,\xi^0)\in T^\circ\mathbb R^n$, $p\in[1,+\infty]$ and $r>\frac{n}{\mu_\ast q}+\left[\frac{n}{\mu_\ast}\right]+1$, where $q$ is the conjugate exponent of $p$, let  $a(x,\xi)\in\mathcal FL^p_{r,M}S^m_M(x_0)$ be, microlocally $M-$elliptic at $(x_0,\xi^0)$ with positive order $m$, such that $a(x,D)$ is properly supported. For all $0<\delta<\mu_\ast/\mu^{\ast}$ and $m+(\delta-1)\left(r-\frac{n}{\mu_\ast q}\right)<s\le r+m$ we have
 \begin{equation}\label{mcl_ell_lim_smoooth}
 \begin{split}
 & u\in\mathcal FL^p_{s-\delta\left(r-\frac{n}{\mu_\ast q}\right), M, {\rm loc}}(x_0)\,,\\
 & \mbox{and}\quad a(x,D)u\in\mathcal FL^p_{s-m, M, {\rm mcl}}(x_0,\xi^0)
 \end{split}\quad\Rightarrow\quad u\in\mathcal FL^p_{s, M, {\rm mcl}}(x_0,\xi^0)\,.
 \end{equation}
\end{theorem}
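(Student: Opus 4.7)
The proof proceeds by combining the splitting $a_\phi = a_\phi^\# + a_\phi^\natural$ of Proposition \ref{pro:2} with the microlocal parametrix of Proposition \ref{microparametrix} for the smooth, microlocally $M$-elliptic piece, and with the Fourier Lebesgue boundedness of Theorem \ref{FL_thm} for the non-smooth residual piece.

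First I would localize in $x$ near $x_0$: choose $\phi\in C^\infty_0(\mathbb R^n)$ with $\phi(x_0)\neq 0$ and support so small that $a_\phi(x,\xi):=\phi(x)a(x,\xi)\in\mathcal FL^p_{r,M}S^m_M$ is still microlocally $M$-elliptic at $(x_0,\xi^0)$. Proper support of $a(x,D)$, combined with the identity $a_\phi(x,D)u=\phi\cdot a(x,D)u$ and the fact that multiplication by a compactly supported smooth function preserves microlocal regularity at $(x_0,\xi^0)$, ensures $a_\phi(x,D)u\in\mathcal FL^p_{s-m,M,\textup{mcl}}(x_0,\xi^0)$. Next, I would apply Proposition \ref{pro:2} with the given $\delta\in(0,\mu_\ast/\mu^\ast)$ and with $\kappa:=r-n/(\mu_\ast q)$; the hypothesis $r>\frac{n}{\mu_\ast q}+[\frac{n}{\mu_\ast}]+1$ secures $\kappa>[\frac{n}{\mu_\ast}]+1$, the threshold for boundedness on the Fourier Lebesgue scale $\{\mathcal FL^p_{s,M}\}_{s\in\mathbb R}$. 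This yields $a_\phi^\#\in S^m_{M,\delta,\kappa}$, which is microlocally $M$-elliptic at $(x_0,\xi^0)$ by Proposition \ref{SHARPEL}, and $a_\phi^\natural\in\mathcal FL^p_{r,M}S^{m-\delta\kappa}_{M,\delta}$.

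Proposition \ref{microparametrix} (applicable since $0<\delta<\mu_\ast/\mu^\ast$ and $\kappa>\kappa_0$) then produces $b(x,\xi)\in S^{-m}_{M,\delta,\kappa}$ with $b(x,D)a_\phi^\#(x,D)=I+l(x,D)$, where $l\in\textup{mcl}\,S^{-\infty}(x_0,\xi^0)$. Rewriting,
\begin{equation*}
u=b(x,D)\,a_\phi(x,D)u\;-\;b(x,D)\,a_\phi^\natural(x,D)u\;-\;l(x,D)u,
\end{equation*}
the problem reduces to showing that each of the three right-hand terms lies in $\mathcal FL^p_{s,M,\textup{mcl}}(x_0,\xi^0)$. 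For the first, the localization step already gives $a_\phi(x,D)u\in\mathcal FL^p_{s-m,M,\textup{mcl}}(x_0,\xi^0)$, and Proposition \ref{microsobolevaction} applied to the smooth $b(x,\xi)$ of order $-m$ lifts the regularity by $m$ orders. For the third, the hypothesis $u\in\mathcal FL^p_{s-\delta\kappa,M,\textup{loc}}(x_0)$ entails $u\in\mathcal FL^p_{s-\delta\kappa,M,\textup{mcl}}(x_0,\xi^0)$, and Proposition \ref{microsobolevaction} for the microlocally smoothing $l$ upgrades this to arbitrarily high Fourier Lebesgue order at $(x_0,\xi^0)$.

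The decisive term is the middle one, $b(x,D)\,a_\phi^\natural(x,D)u$, handled by Theorem \ref{FL_thm}. To bring the local hypothesis on $u$ into a global continuity statement I would pick a secondary cutoff $\tilde\phi\in C^\infty_0(\mathbb R^n)$ with $\tilde\phi\equiv 1$ on $\textup{supp}\,\phi$ and $\tilde\phi u\in\mathcal FL^p_{s-\delta\kappa,M}$. Splitting $u=\tilde\phi u+(1-\tilde\phi)u$, a pseudolocal argument exploiting the proper support of $a(x,D)$ and the compact $x$-support of $\phi$ shows that $a_\phi^\natural(x,D)\bigl((1-\tilde\phi)u\bigr)$ is microlocally smooth at $(x_0,\xi^0)$. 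For the main contribution, Theorem \ref{FL_thm} applied to the symbol $a_\phi^\natural$ of order $m-\delta\kappa$, at target Fourier Lebesgue exponent $s-m$, requires precisely $(\delta-1)\kappa<s-m\le r$, which is the hypothesis $m+(\delta-1)(r-\frac{n}{\mu_\ast q})<s\le r+m$; this yields $a_\phi^\natural(x,D)(\tilde\phi u)\in\mathcal FL^p_{s-m,M}$, and a final application of Proposition \ref{microsobolevaction} with $b(x,\xi)\in S^{-m}_{M,\delta,\kappa}$ places $b(x,D)a_\phi^\natural(x,D)u$ in $\mathcal FL^p_{s,M,\textup{mcl}}(x_0,\xi^0)$. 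The main technical obstacle I foresee is exactly this pseudolocal reduction from $u$ to $\tilde\phi u$: the Taylor-type splitting destroys the compact $x$-support inherited by $a_\phi$ from $\phi$, so the nonlocal effect of $a_\phi^\natural(x,D)$ on the part of $u$ lying away from $x_0$ must be controlled carefully.
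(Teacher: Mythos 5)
Your proof attempt is correct in substance and rests on the same three pillars as the paper's argument: the Taylor splitting $a_\phi=a_\phi^\#+a_\phi^\natural$ of Proposition \ref{pro:2} with $\kappa=r-\tfrac{n}{\mu_\ast q}$, Theorem \ref{FL_thm} for the non--smooth residual $a_\phi^\natural$, and the microlocal ellipticity of $a_\phi^\#$ via Proposition \ref{SHARPEL}. The order and range computations for Theorem \ref{FL_thm} are also right.

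Where you diverge from the paper is in the order of the localization: you apply the microlocal parametrix of $a_\phi^\#(x,D)$ directly to the \emph{unlocalized} $u$, obtaining $u = b(x,D)a_\phi(x,D)u - b(x,D)a_\phi^\natural(x,D)u - l(x,D)u$. This forces you to deal with $a_\phi^\natural(x,D)u$, which you cannot control globally because $u\in\mathcal S'$ is only locally in $\mathcal FL^p$ near $x_0$; you compensate with the secondary cutoff $\tilde\phi$ and a pseudolocality claim for $a_\phi^\natural(x,D)\bigl((1-\tilde\phi)u\bigr)$. That claim is the genuine extra work in your route: $a_\phi^\natural$ has neither compact $x$-support (the low-pass filters in \eqref{a_sharp} destroy the support of $\phi$) nor smoothness, so the pseudolocality must be wrung out of the identity $a_\phi^\natural(x,D)\bigl((1-\tilde\phi)u\bigr)=-a_\phi^\#(x,D)\bigl((1-\tilde\phi)u\bigr)$ (using proper support to kill the $a_\phi$ term) together with the off-diagonal smoothness of the Schwartz kernel of the smooth operator $a_\phi^\#(x,D)$. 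None of this is unfixable, but it is a detour the paper avoids entirely: the paper first invokes proper support to write $a_\phi(x,D)u=a_\phi(x,D)(\psi u)$ with $\psi\equiv 1$ on $\mathrm{supp}\,\phi$, so that every subsequent operator acts on the \emph{globally controlled} object $\psi u\in\mathcal FL^p_{s-\delta(r-\frac{n}{\mu_\ast q}),M}$; it then reads off the rearranged equation $a_\phi^\#(x,D)(\psi u)=\phi f - a_\phi^\natural(x,D)(\psi u)$, applies Theorem \ref{FL_thm} to the $\natural$ term, and concludes with a single invocation of Theorem \ref{FL_cor} applied to $\psi u$, with no explicit parametrix and no pseudolocality argument needed. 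If you move the cutoff inside before splitting, your proof collapses to the paper's.
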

\begin{proof} Let us set $f:=a(x,D)u$ for $u\in\mathcal FL^p_{s-\delta\left(r-\frac{n}{\mu_\ast q}\right), M, {\rm loc}}(x_0)$. Since $a(x,D)$ is properly supported, suitable smooth functions $\phi\in C^\infty_0(\mathbb R^n)$ and $\psi$ satisfying \eqref{locality} and \eqref{psi} can be found, supported on such a sufficiently small neighborhood of $x_0$ that $\psi u\in FL^p_{s-\delta\left(r-\frac{n}{\mu_\ast q}\right), M}$ and $a_\phi(x,\xi)\in\mathcal FL^p_{r,M}S^m_M$, cf. Definition \ref{loc_mcl_FL} and \eqref{FL_loc_symb}. Following the decomposition method illustrated in \eqref{a_sharp}, \eqref{a_natural}, for $0<\delta<\mu_\ast/\mu^{\ast}$ let $a^\#_\phi(x,\xi)\in S^m_{M,\delta,\kappa}$ and $a^\natural_\phi(x,\xi)\in\mathcal FL^p_{r,M}S^{m-\delta\left(r-\frac{n}{\mu_\ast q}\right)}_{M,\delta}$ be defined as in \eqref{a_sharp}, \eqref{a_natural}, with $a_\phi$ instead of $a$ and where $\kappa=r-\frac{n}{\mu_\ast q}$, hence $u$ satisfies the equation
$$
a^\sharp_\phi(x,D)(\psi u)=\phi f-a^\natural_\phi(x,D)(\psi u)\,.
$$
Because $a^{\natural}_\phi(x,\xi)\in \mathcal FL^p_{r,M}S^{m-\delta\left(r-\frac{n}{\mu_\ast q}\right)}_{M,\delta}$, $\psi u\in \mathcal FL^p_{s-\delta\left(r-\frac{n}{\mu_\ast q}\right),M}$, whereas $f$ (so also $\phi f$) belongs to $\mathcal FL^p_{s-m,M,{\rm mcl}}(x_0,\xi_0)$ (cf. Proposition \ref{proiezione}), for the range of $s$ belonging as prescribed in the statement of Theorem \ref{APPLICATION} (notice in particular that from $0<\delta<\mu_\ast/\mu^\ast\le 1$ even the endpoint $s=r+m$ is allowed), one can apply Theorem \ref{FL_thm} to find
\[
a^\#_\phi(x,D)(\psi u)\in\mathcal FL^p_{s-m,M,{\rm mcl}}(x_0,\xi^0)\,;
\]
hence, because $\kappa>\left[n/\mu_\ast\right]+1$, applying Theorem \ref{FL_cor} to $a_\phi^{\#}(x,\xi)$ yields that $\psi u$, hence $u$, belongs to $\mathcal FL^p_{s,M,{\rm mcl}}(x_0,\xi^0)$, which ends the proof.
\end{proof}
It is worth noticing that the result of Theorem \ref{APPLICATION} can be restated in terms of characteristic set of a symbol and Fourier Lebesgue of a distribution as in the next result.
\begin{proposition}\label{APPLICATION1}
Let $r$, $m$, $p$, $s$ and $\delta$ satisfy the same conditions as in Theorem \ref{APPLICATION}. Then for $a(x,\xi)\in\mathcal FL^p_{r,M}S^m_M$ and $u\in\mathcal FL^p_{s-\delta\left(r-\frac{n}{\mu_\ast q}\right), M}$ we have
$$
\begin{array}{ll}
WF_{\mathcal FL^p_{s,M}}(u)\subset WF_{\mathcal FL^p_{s-m,M}}(a(x,D)u)\cup{\rm Char}(a)\,.
\end{array}
$$
\end{proposition}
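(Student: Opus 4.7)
The proposition is the wave-front-set reformulation of Theorem \ref{APPLICATION}, and the plan is to prove it pointwise: for any fixed $(x_0,\xi^0)$ lying outside the right-hand side, I would verify $u \in \mathcal FL^p_{s,M,{\rm mcl}}(x_0,\xi^0)$. By Definitions \ref{pfl_def_WF} and \ref{Melliptic}, $(x_0,\xi^0) \notin {\rm Char}(a)$ means $a$ is microlocally $M$-elliptic at $(x_0,\xi^0)$, while $(x_0,\xi^0) \notin WF_{\mathcal FL^p_{s-m,M}}(a(x,D)u)$ means $a(x,D)u \in \mathcal FL^p_{s-m,M,{\rm mcl}}(x_0,\xi^0)$.

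First, I would invoke the splitting technique of Section \ref{propation_sing_FL_sct_0}. Pick $\delta \in (0, \mu_\ast/\mu^\ast)$ within the prescribed range and decompose $a = a^\# + a^\natural$ as in \eqref{a_sharp}, \eqref{a_natural}. Proposition \ref{pro:2} gives $a^\# \in S^m_{M,\delta,\kappa}$ with $\kappa := r - \frac{n}{\mu_\ast q}$ (satisfying $\kappa > [n/\mu_\ast]+1$ by hypothesis on $r$) and $a^\natural \in \mathcal FL^p_{r,M}S^{m-\delta\kappa}_{M,\delta}$; Proposition \ref{SHARPEL} shows that $a^\#$ inherits the microlocal $M$-ellipticity of $a$ at $(x_0,\xi^0)$.

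The key gain over Theorem \ref{APPLICATION} is that the \emph{global} hypothesis $u \in \mathcal FL^p_{s-\delta\kappa,M}$ furnishes a global continuity estimate for $a^\natural(x,D)u$, bypassing the ``properly supported'' reduction. Indeed, applying Theorem \ref{FL_thm} to the symbol $a^\natural$ at the exponent $s-m$, which lies in $\left((\delta-1)\kappa,r\right]$ exactly by the assumption $m+(\delta-1)\kappa < s \le r+m$ together with $\delta<1$, yields the bounded operator $a^\natural(x,D) : \mathcal FL^p_{s-\delta\kappa,M} \to \mathcal FL^p_{s-m,M}$. Hence $a^\natural(x,D)u \in \mathcal FL^p_{s-m,M}$ globally, whence
\begin{equation*}
a^\#(x,D)u \;=\; a(x,D)u - a^\natural(x,D)u \;\in\; \mathcal FL^p_{s-m,M,{\rm mcl}}(x_0,\xi^0).
\end{equation*}

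Finally, since $\{\mathcal FL^p_{\sigma,M}\}_{\sigma \in \mathbb R}$ is a $M$-microlocal scale with threshold $\kappa_0 = [n/\mu_\ast]+1 < \kappa$, Proposition \ref{mcl_Xs-reg} applied to the smooth, microlocally $M$-elliptic symbol $a^\#$ of order $m$ yields $u \in \mathcal FL^p_{s,M,{\rm mcl}}(x_0,\xi^0)$, which is the desired conclusion. The main delicate point is the parameter bookkeeping used to identify the exponent $s-m$ within the range of validity of Theorem \ref{FL_thm} applied to $a^\natural$ (of order $m-\delta\kappa$ and Fourier-Lebesgue regularity $r$ in $x$); this is essentially the same arithmetic that dictates the admissible range of $s$ in Theorem \ref{APPLICATION} itself.
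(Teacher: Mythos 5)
Your proof is correct, and it essentially reruns the argument of the paper's proof of Theorem \ref{APPLICATION} (which the paper presents Proposition \ref{APPLICATION1} as a direct reformulation of, without any separate argument): decompose $a=a^\#+a^\natural$ via \eqref{a_sharp}, \eqref{a_natural}, treat $a^\natural$ by Theorem \ref{FL_thm} and $a^\#$ by the smooth microlocal machinery (Propositions \ref{SHARPEL} and \ref{mcl_Xs-reg}), with the same parameter arithmetic identifying $s-m$ in the admissible range for $a^\natural$. The one point worth registering, which you flag explicitly and which the paper is silent about, is that the global hypotheses $a\in\mathcal FL^p_{r,M}S^m_M$ and $u\in\mathcal FL^p_{s-\delta\left(r-\frac{n}{\mu_\ast q}\right),M}$ let you apply Theorem \ref{FL_thm} directly to $a^\natural(x,D)u$, so the properly-supported reduction used in Theorem \ref{APPLICATION} is not needed here; this is the correct way to make the "restatement" rigorous, since a localization $\phi(x)a(x,\xi)$ of a general symbol in $\mathcal FL^p_{r,M}S^m_M$ need not give a properly supported operator.
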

The statement of Theorem \ref{APPLICATION}, as well as Proposition \ref{APPLICATION1}, applies in particular to the linear PDE \eqref{APPL:1} considered at the beginning of this section, thus Theorem \ref{cor_APPLICATION} is proved.

\medskip

{\small
Gianluca Garello\\
Dipartimento di Matematica\\
Universit\`a di Torino\\
Via Carlo Alberto 10, I-10123 Torino, Italy\\
gianluca.garello@unito.it}
\medskip

\noindent {\small
Alessandro Morando\\
DICATAM-Sezione di Matematica\\
Universit\`a di Brescia\\
Via Valotti 9, I-25133 Brescia, Italy\\
alessandro.morando@unibs.it}

\end{document}